\newcommand{\Z}{\mathbb{Z}}
\newcommand{\F}{\mathbb{F}}
\newcommand{\Q}{\mathbb{Q}}
\newcommand{\A}{\mathbb{A}}
\renewcommand{\L}{\mathbb{L}}
\newcommand{\C}{\mathbb{C}}
\newcommand{\mC}{\mathcal{C}}
\newcommand{\mH}{\mathcal{H}}
\newcommand{\mO}{\mathcal{O}}
\newcommand{\fm}{\mathfrak{m}} 
\newcommand{\fU}{\mathfrak{U}}
\newcommand{\fX}{\mathfrak{X}}
\newcommand{\cl}{\overline}
\newcommand{\so}{\Rightarrow}
\newcommand{\ra}{\rightarrow}
\newcommand{\sq}{\widetilde}
\DeclareMathOperator{\Ainf}{\text{A}_{\textup{inf}}}
\DeclareMathOperator{\Ainfx}{\mathbb{A}_{\textup{inf}, X}}
\DeclareMathOperator{\Acris}{\text{A}_{\textup{cris}}} 
\DeclareMathOperator{\Bcrisp}{\text{B}_{\textup{cris}}^+}
\DeclareMathOperator{\Bcris}{\text{B}_{\textup{cris}}}
\DeclareMathOperator{\Bdrp}{\text{B}_{\textup{dR}}^+}
\DeclareMathOperator{\Bdr}{\text{B}_{\textup{dR}}}
\DeclareMathOperator{\crys}{crys}
\DeclareMathOperator{\qSyn}{qSyn}
\DeclareMathOperator{\qrsP}{qrsPerfd}
\newcommand{\gr}[1]{\langle {#1} \rangle} 
\DeclareMathOperator{\spec}{Spec}
\DeclareMathOperator{\spf}{Spf}
\DeclareMathOperator{\spa}{Spa}
\DeclareMathOperator{\etale}{\textup{\'etale}}
\DeclareMathOperator{\ett}{\textup{\'et}}
\DeclareMathOperator{\pet}{\textup{pro\'et}}
\DeclareMathOperator{\isom}{\;\xrightarrow{\: {}_{\sim} \:} \;}
\newcommand{\bi}{\begin{itemize}}
\newcommand{\ei}{\end{itemize}}
\newcommand{\be}{\begin{enumerate}}
\newcommand{\ee}{\end{enumerate}}
\newtheorem{theorem}{Theorem}[section]
\newtheorem{definition}[theorem]{Definition}
\newtheorem{construction}[theorem]{Construction}
\newtheorem{lemma}[theorem]{Lemma}
\newtheorem{corollary}[theorem]{Corollary}
\newtheorem{proposition}[theorem]{Proposition}
\newtheorem{remark}[theorem]{Remark}
\newtheorem{mainthm}{Theorem} 
\newtheorem{mainlemma}[mainthm]{Lemma}
\newtheorem{maincor}[mainthm]{Corollary}
\newtheorem{mainrmk}[mainthm]{Remark}
\title[The crystalline comparison of $\Ainf$-cohomology]{\small The crystalline comparison of $\Ainf$-cohomology: \\ the case of good reduction} 
\author{Zijian Yao}
\email{zijian.yao.math@gmail.com}
\address{Department of Mathematics, Harvard University.}
\begin{document}

\begin{abstract}
We provide a simple approach for the crystalline comparison of $\Ainf$-cohomology, and reprove the comparison between crystalline and $p$-adic $\etale$ cohomology for formal schemes in the case of good reduction. 
\end{abstract}
\maketitle


\section{Introduction}

The recently developed theory of $\Ainf$-cohomology $R \Gamma_{\Ainf} (\fX)$ in \cite{BMS}, for smooth formal schemes $\fX$ over rings like $\mO_{\C_p}$, plays a central role in  integral $p$-adic Hodge theory --- the study of $\Z_p$-lattices in $p$-adic Galois representations. This paper studies the relation between the aforementioned theory and crystalline cohomology. More precisely, we provide a crystalline interpretation of $R \Gamma_{\Ainf} (\fX) \widehat \otimes^\L \Acris$, when valued on certain semiperfectoid covers of $\fX$. 
This allows us to obtain a simplified proof of the crystalline comparison of $\Ainf$-cohomology. In particular, we avoid some rather involved analysis in relative de Rham--Witt theory and the methods of ``all possible coordinates" in \cite{BMS}. Moreover, we expect our approach to generalize relatively easily to the case of semistable reduction. 


\subsection{Background}
To explain further let us fix notations. Let $C$ be a complete algebraically closed nonarchimedean  extension of $\Q_p$, with ring of integers $\mO_C$ and residue field $k$. Let $\fX$ be a smooth formal scheme over $\spf \mO_C$, with adic generic fiber $X = \fX_{C}^{\textup{ad}}$ over $\spa(C, \mO_C)$ and special fiber $\fX_k$ over $\spec k$.  We fix a compatible system of primitive $p$-power roots of unity in $\mO_C$, which defines an element $\epsilon:= (1, \zeta_p, \zeta_{p^2}, ... )$ in its tilt $\mO_C^\flat$. As usual let $\mu:= [\epsilon] - 1 \in \Ainf = W(\mO_C^\flat)$, and $\xi : = \mu/\varphi^{-1} (\mu)$. We denote by $\theta: \Ainf \twoheadrightarrow \mO_C$ the natural projection whose kernel is generated by $\xi$, and by $\vartheta: \Ainf \twoheadrightarrow W(k)$ the map induced from $\mO_C^\flat \twoheadrightarrow k$ by the Witt vector functoriality. Finally, recall that $\Acris$ is the $p$-completed PD-envelop of $\Ainf \twoheadrightarrow \mO_C$.  

There are at least three constructions of the $\Ainf$-cohomology $R \Gamma_{\Ainf} (\fX)$ of $\fX$ by now: via perfectoid spaces (\cite{BMS}), via topological Hochschild homology (\cite{THH}), and via the prismatic site (\cite{prismatic}). We are concerned with the first construction, which is defined as 
$$R \Gamma_{\Ainf} (\fX) :=  R \Gamma (\fX_{\ett}, L\eta_{\mu} R \nu_* \Ainfx) \in D(\Ainf).$$
Here $\Ainfx$ is (the derived $p$-completed) Fontaine's period sheaf $W(\widehat \mO_{X^\flat}^{+})$ on the pro-$\etale$ site $X_{\pet}$, the map $\nu: X_{\pet} \ra \fX_{\ett}$ is the natural map of sites, and $L \eta_{\mu}$ is the d\'ecalage operator (see Section 5 and 6 of \cite{BMS}). The perfectoid nature of this definition lies within $X_{\pet}$, which has a basis given by affinoid perfectoid objects $(S, S^+)$, on which $\Ainfx$ takes the value $\Ainf(S^+) = W(S^{+, \flat})$.\footnote{up to $W(\fm^\flat)$-torsion, where $\fm \subset \mO_C$ is the maximal ideal. See Lemma 5.6 of \textit{loc.cit}.} Note that $R \Gamma_{\Ainf} (\fX)$ is equipped with a Frobenius operator $\varphi$ induced from the Frobenius on $\Ainfx$, which essentially comes from the tilting equivalence. From this definition, it is not difficult to deduce the following comparison theorems of $R \Gamma_{\Ainf} (\fX)$.
\begin{itemize}[--]
\item 
 $\etale$ comparison:  
 $R \Gamma_{\Ainf} (\fX)\otimes_{\Ainf} \Ainf[\frac{1}{\mu}] \cong R \Gamma(X_{\ett}, \Z_p) \otimes_{\Z_p} \Ainf [\frac{1}{\mu}]. $ \footnote{For example it follows from the primitive comparison theorem of Scholze (Theorem 5.1 and the proof of Theorem 8.4 in \cite{Scholze}). In fact, Bhatt gives another proof of the $\etale$ comparison without using the primitive comparison theorem, for which one needs the de Rham comparison (at least in the almost sense). See Remark 8.4 in \cite{Bhatt}.}
\item de Rham comparison: 
$\gamma_{\textup{dR}}: R \Gamma_{\Ainf} (\fX) \otimes^\L_{\theta} \mO_C \isom R \Gamma_{\textup{dR}} (\fX). $ 
\end{itemize} 
Note that, since $R \Gamma_{\Ainf}(\fX)$ is derived $\xi$-complete (see the proof of Lemma \ref{lemma:invalid}), 
the de Rham comparison in particular ensures that $R \Gamma_{\Ainf}(\fX)$ takes values in perfect $\Ainf$-complexes. We will take the de Rham comparison 
as an input in our paper, which is the easier part of \cite{BMS}. It can be proven either from Theorem 8.3 of \textit{loc.cit.} or, alternatively, by the methods of \cite{Bhatt}. 

\subsection{Main results}
Let $R\Gamma_{\crys} ((\fX_{\mO_C/p})/\Acris)$ be the crystalline cohomology of $\fX_{\mO_C/p}$ over the base $\Acris$.  Let us now state our first first main result. 

\begin{mainthm} \label{mainthm:exist_map}
There exists a functorial $\varphi$-equivariant map 
$$ h_{\textup{crys}}: R\Gamma_{\crys} ((\fX_{\mO_C/p})/\Acris) \longrightarrow R \Gamma_{\Ainf}(\fX) \widehat \otimes_{\Ainf}^\L \Acris $$
which is compatible with the de Rham comparison $\gamma_{\textup{dR}}^{-1}$ after base change.
\end{mainthm}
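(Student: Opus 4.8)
The plan is to construct the map $h_{\textup{crys}}$ via the universal property of crystalline cohomology, by producing a sheaf-level map on an appropriate site and then globalizing. The key idea is that $R\Gamma_{\Ainf}(\fX) \widehat\otimes^\L_{\Ainf} \Acris$ should be computed, after passing to a suitable affinoid perfectoid (or semiperfectoid) cover, by an explicit $\Acris$-algebra that receives a crystalline structure map. First I would reduce to the affine case: pick a cover of $\fX$ by affine opens $\spf R$ admitting an \'etale map to a torus (so that one has a perfectoid pro-\'etale cover $\spf R_\infty \to \spf R$), and observe that both sides satisfy \'etale descent, so it suffices to construct the map locally and check it glues. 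On the affine piece, $R\Gamma_{\Ainf}(\spf R)\widehat\otimes^\L_{\Ainf}\Acris$ is computed (via the almost purity / Cartan--Leray machinery of \cite{BMS}) by a complex built from $\Acris(R_\infty)$-type terms, where $\Acris(-)$ denotes the $p$-completed PD-envelope relative to $\theta$.

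The heart of the argument is then the following: the $p$-completed PD-envelope $\Acris(R_\infty)$ of $\Ainf(R_\infty^\flat) \twoheadrightarrow \widehat{R_\infty}$ (compatible with $\Acris \to \mO_C$) receives a canonical map from the crystalline cohomology of $(\fX_{\mO_C/p})/\Acris$ because $\spf \Acris(R_\infty) \to \spf \mO_C/p$ together with its PD-structure defines an object of the crystalline site of $\fX_{\mO_C/p}$ over $\Acris$ (here one uses that $R_\infty/p$ is relatively perfect over $\mO_C/p$, or at least semiperfect, so that the PD-envelope is well behaved and the structure map is a PD-thickening). More precisely, I would first treat the semiperfectoid terms in the \v{C}ech--Alexander-type complex: each $\widehat{R_\infty^{(n)}} := \widehat{R_\infty \widehat\otimes_R \cdots \widehat\otimes_R R_\infty}$ is semiperfectoid, its tilt-Witt ring $\Ainf(-)$ surjects onto it, and the $p$-completed PD-envelope is an object of $(\fX_{\mO_C/p}/\Acris)_{\crys}$. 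This produces a map from $R\Gamma_{\crys}$ into a complex quasi-isomorphic to $R\Gamma_{\Ainf}(\spf R)\widehat\otimes^\L_{\Ainf}\Acris$, using the comparison between $L\eta_\mu$ applied to the Witt-sheaf cohomology and the PD-envelope computation — this last identification is where one invokes that $\mu$ generates the kernel of $\Acris \to \widehat{R_\infty}$ up to PD-structure, i.e. the relation between $L\eta_\mu$ and divided powers of $\xi$. Frobenius-equivariance is built in because all the maps in sight ($\varphi$ on $\Ainf(-)$, the crystalline Frobenius) are compatible with the Witt-vector Frobenius.

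Finally I would check compatibility with $\gamma_{\textup{dR}}^{-1}$: base-changing $h_{\textup{crys}}$ along $\Acris \twoheadrightarrow \mO_C$ (i.e. $\otimes^\L_{\Acris}\mO_C$) turns the left side into $R\Gamma_{\crys}((\fX_{\mO_C/p})/\Acris)\otimes^\L_{\Acris}\mO_C$, which by the crystalline-to-de Rham comparison (Berthelot) is $R\Gamma_{\textup{dR}}(\fX_{\mO_C}/\mO_C)$, i.e. $R\Gamma_{\textup{dR}}(\fX)$; one must verify the resulting map agrees with $\gamma_{\textup{dR}}^{-1}$ composed with the reduction of $h_{\textup{crys}}$, which amounts to tracking that the local model maps are the standard ones (de Rham complex of $R$ embeds in the PD-complex of $R_\infty^{(\bullet)}$ compatibly). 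The main obstacle, I expect, is the \textbf{local computation identifying $R\Gamma_{\Ainf}(\spf R)\widehat\otimes^\L_{\Ainf}\Acris$ with the PD-envelope complex} — one needs to commute $L\eta_\mu$ past $\widehat\otimes^\L_{\Ainf}\Acris$ and past $R\nu_*$, control the $W(\fm^\flat)$-torsion almost-mathematics, and match the divided-power structure; this is precisely the step where \cite{BMS} resorts to relative de Rham--Witt theory, and the point of the paper is to replace it with a direct semiperfectoid argument.
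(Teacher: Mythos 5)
There is a genuine gap, and it sits exactly where you flag your ``main obstacle'': your construction only produces a map out of $R\Gamma_{\crys}((\fX_{\mO_C/p})/\Acris)$ into a \v{C}ech--Alexander-type complex of $p$-completed PD-envelopes $\Acris(\widehat{R_\infty^{(\bullet)}})$, and you then need the identification of that complex with $R\Gamma_{\Ainf}(\spf R)\widehat\otimes^\L_{\Ainf}\Acris$ (commuting $L\eta_\mu$ past $R\nu_*$ and the completed base change, matching divided powers of $\xi$, controlling almost-torsion). That identification is not a technical afterthought: it is essentially the $\Acris$-comparison of \cite{BMS}, the very statement whose proof there requires relative de Rham--Witt theory or the ``all possible coordinates'' device, and you offer no argument for it. The paper's point is that this identification is never needed. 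The key observation (Lemma \ref{lemma:PD_thickening}) is much weaker: for a quasiregular semiperfectoid $S$, the surjection $A\Omega_S\widehat\otimes^\L\Acris \to S/p$ is itself a PD-thickening --- proved by a short element argument using the de Rham comparison $A\Omega_S/(p,\xi)\cong \L\Omega_{(S/p)}$ and the Frobenius to show any $x\in\ker(A\Omega_S\to S)$ lies in $(p^{1/p},\xi)$. Hence $A\Omega_S\widehat\otimes^\L\Acris$ is \emph{already} an object of $\textup{CRIS}((R/p)/\Acris)$, and the map $h_S$ comes for free from restriction in the crystalline site, with no computation of $A\Omega_S\widehat\otimes^\L\Acris$ as a PD-envelope (the paper explicitly declines to prove that $\Acris(S/p)\to A\Omega_S\widehat\otimes^\L\Acris$ is an isomorphism; see Remark \ref{remark:alternative_construction_using_THH}). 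The globalization is then done by quasisyntomic descent for $R\mapsto A\Omega_R\widehat\otimes^\L\Acris$ (Lemma \ref{lemma:tensor_with_Acris_is_sheaf}), taking the homotopy limit over \emph{all} maps $R\to S$ with $S\in\qrsP_{\mO_C}^{\textup{proj}}$.

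Two secondary issues in your sketch would also need repair even granting the main identification. First, building the local map from a chosen framing $\spf R_\infty\to\spf R$ makes functoriality in $R$ (and hence gluing, Galois- and $\varphi$-equivariance of the glued map) nontrivial --- this is precisely why \cite{BMS} takes a limit over all coordinate choices; the paper's limit over all quasiregular semiperfectoid covers avoids any choice. Second, the compatibility with $\gamma_{\textup{dR}}^{-1}$ is not purely formal bookkeeping: in the paper it is Lemma \ref{lemma:compatible_with_dR_bc}, which requires comparing the crystalline-site-theoretic map with the functorial map on derived de Rham cohomology, reducing to polynomial algebras and perfectoid targets (via \cite{BdJ}); your ``tracking that the local model maps are the standard ones'' is the statement to be proved, not a proof.
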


\begin{mainrmk}
The map we construct here goes in the opposite direction of a map constructed in \cite{BMS} (by taking a certain limit over all choices of coordinates), which they show is an isomorphism. 
In fact, as we explain next, the existence of such a functorial map (in the direction that we construct) is sufficient to deduce the comparison theorems of interest. 
\end{mainrmk}

\begin{mainrmk} 
In \cite{Faltings}, Faltings constructs a map $\chi: R\Gamma_{\crys} ((\fX_{\mO_C/p})/\Acris) \ra R \Gamma(X_{\pet}, \A_{\textup{cris}})$  (in modern language) to relate crystalline and \'etale cohomology (he even shows that the latter is almost isomorphic to $R \Gamma(X_{\ett}, \Z_p) \otimes \Acris$). Faltings's map $\chi$ is essentially the composition of $h_{\textup{crys}}$ with the canonical map $\delta: R \Gamma_{\Ainf}(\fX) \widehat \otimes_{\Ainf}^\L \Acris \ra R \Gamma(X_{\pet}, \A_{\textup{cris}})$.  
It does not seem possible to reconstruct $h_{\textup{crys}}$ from $\chi$, and our approach for $\Bcris$ comparison is different from \cite{Faltings}. 
\end{mainrmk}

As a consequence we are able to reprove the following results of \cite{BMS}. 

\begin{mainthm} \label{mainthm:BKF_and_C_cris}
Let $\fX$ be a smooth proper formal scheme over $\mO_C$. 
\be[(1)]
\item The cohomology $H^i_{\Ainf} (\fX)$ of the perfect complex $R\Gamma_{\Ainf}(\fX) \in D(\Ainf)$ takes values in Breuil--Kisin--Fargues modules (see Definition \ref{def:BKF_module}). 
\item $R\Gamma_{\Ainf}(\fX) $ enjoys the following $\varphi$-equivariant comparison with the crystalline cohomology of the special fiber 
$$\gamma_{\textup{crys}}: R\Gamma_{\Ainf}(\fX) \otimes^\L_{\vartheta} W(k) \cong R \Gamma_{\textup{crys}} (\fX_k /W(k)).$$ 
\item Now suppose that $\fX_{\mO_K}$ is a smooth proper formal scheme over $\mO_K$, where $K$ is a complete discretely valued nonarchimedean extension of $\Q_p$ with residue field $k_0$. Let $C = C_K$ be a $p$-completed algebraic closure of $K$. Then there exists a $\textup{Gal}_K$, $\varphi$-equivariant isomorphism 
$$ H^i_{\textup{crys}} (\fX_{k_0}/W(k_0)) \otimes_{W(k_0)} \Bcris \isom H^i_{\ett} (\fX_{C}^{\textup{ad}}, \Z_p) \otimes_{\Z_p} \Bcris. $$
In particular,  $H^i_{\ett} (\fX_{C}^{\textup{ad}}, \Q_p)$ is a crystalline $\textup{Gal}_K$-representation. 
\ee
\end{mainthm}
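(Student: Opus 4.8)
The plan is to deduce all three statements from the existence of $h_{\textup{crys}}$ (Theorem~\ref{mainthm:exist_map}), the de Rham comparison $\gamma_{\textup{dR}}$, the \'etale comparison, the perfectness of $R\Gamma_{\Ainf}(\fX)$, and standard facts about crystalline cohomology and finitely presented $\Ainf$-modules. The crucial first step is to promote $h_{\textup{crys}}$ to an isomorphism. Its fibre $D$ is a perfect complex of $\Acris$-modules (crystalline cohomology of a smooth proper scheme is perfect over $\Acris$) which, like both sides, is $p$-complete and derived $\xi$-complete; by the stated compatibility of $h_{\textup{crys}}$ with $\gamma_{\textup{dR}}$ and the fact that $\gamma_{\textup{dR}}$ is an isomorphism, $D \otimes^\L_{\Acris,\theta} \mO_C$ vanishes. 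One then concludes $D = 0$ by a derived Nakayama argument over $\Acris$: after reducing modulo $p$ one uses that the divided powers $\xi^{[n]}$ ($n\geq 2$) become topologically nilpotent in $\Acris/p$, while the Frobenius on $D$ (inherited from the $\varphi$-equivariance of $h_{\textup{crys}}$) controls the remaining directions. Equivalently — and probably closer to the paper's economical route — one checks the statement Zariski-locally on $\fX$, where both complexes can be computed from a single smooth $\Acris$-lift of $\fX_{\mO_C/p}$ (a ``framing''): the crystalline side as the de Rham complex of the lift, the $\Ainf$-side as a $q$-de Rham/Koszul-type complex base-changed to $\Acris$, the two being matched via the de Rham compatibility. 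I expect this isomorphism — and the bookkeeping around the ring $\Acris$ it requires — to be the main obstacle.

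Granting this, part (2) follows by base change along $\vartheta \colon \Acris \to W(k)$. One checks that $\vartheta(\xi) = p$, so $\vartheta$ is a morphism of PD-rings; crystalline base change then gives $R\Gamma_{\crys}((\fX_{\mO_C/p})/\Acris) \otimes^\L_{\Acris,\vartheta} W(k) \cong R\Gamma_{\crys}(\fX_k/W(k))$, while on the other side $R\Gamma_{\Ainf}(\fX)\,\widehat\otimes_{\Ainf}^\L \Acris \otimes^\L_{\Acris,\vartheta} W(k) \cong R\Gamma_{\Ainf}(\fX)\otimes^\L_\vartheta W(k)$, since $\Ainf \to \Acris \xrightarrow{\vartheta} W(k)$ is the original $\vartheta$ and the $p$-completions are harmless on perfect complexes; the resulting $\gamma_{\textup{crys}}$ is $\varphi$-equivariant because $h_{\textup{crys}}$ and $\vartheta$ are. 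For part (1): $H^i_{\Ainf}(\fX)$ is finitely presented because $R\Gamma_{\Ainf}(\fX)$ is perfect; the Frobenius becomes an isomorphism after inverting $\varphi(\xi)$ because $\varphi$ on $\Ainfx$ is an isomorphism (as $\widehat\mO_{X^\flat}^+$ is perfect), combined with the identity $\varphi \circ L\eta_\mu = L\eta_{\varphi(\mu)} \circ \varphi$ and the relation $\varphi(\mu) = \varphi(\xi)\cdot\mu$, so that $L\eta_{\varphi(\mu)}$ and $L\eta_\mu$ agree after inverting $\varphi(\xi)$; and $H^i_{\Ainf}(\fX)[1/p]$ is finite free over $\Ainf[1/p]$ by combining part (2) — which identifies the base change along $\vartheta$ with crystalline cohomology of $\fX_k$, hence finite free after inverting $p$ — with the \'etale comparison and the structure theory of finitely presented $\Ainf$-modules (\cite{BMS}, \S4); here one also invokes the basic property of $L\eta_\mu$ which forces the torsion of $H^i_{\Ainf}(\fX)$ to be $p$-power torsion and hence to vanish after inverting $p$.

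For part (3), the good-reduction hypothesis makes $\textup{Gal}_K$ act on $\fX$ through $\mO_C$, hence functorially on $R\Gamma_{\Ainf}(\fX)$, on $R\Gamma_{\crys}((\fX_{\mO_C/p})/\Acris)$, and on $h_{\textup{crys}}$, compatibly. Base-changing the isomorphism $h_{\textup{crys}}$ along $\Acris \to \Bcris$ — note $\Bcris = \Acris[\tfrac1\mu,\tfrac1p]$, since $t$ and $\mu$ differ by a unit of $\Acris$ — turns the right-hand side into $R\Gamma_{\Ainf}(\fX)\otimes_{\Ainf}\Bcris$, which by the \'etale comparison is $R\Gamma(X_{\ett},\Z_p)\otimes_{\Z_p}\Bcris$. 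By part (1) each $H^i_{\Ainf}(\fX)$ is a Breuil--Kisin--Fargues module carrying a semilinear $\textup{Gal}_K$-action whose \'etale realization is $H^i_{\ett}(\fX_{C}^{\textup{ad}},\Z_p)$ (\'etale comparison) and whose ``crystalline realization'' $H^i_{\Ainf}(\fX)\otimes^\L_\vartheta W(k)$ is, by part (2), $H^i_{\crys}(\fX_k/W(k)) = H^i_{\crys}(\fX_{k_0}/W(k_0))\otimes_{W(k_0)}W(k)$; moreover its Hodge--Tate weights lie in $[0,i]$, as one reads off from the de Rham comparison. Feeding this into the dictionary between Breuil--Kisin--Fargues modules and Galois representations (Fargues; see \cite{BMS}, \S4) produces the desired $\textup{Gal}_K$- and $\varphi$-equivariant isomorphism $H^i_{\crys}(\fX_{k_0}/W(k_0))\otimes_{W(k_0)}\Bcris \isom H^i_{\ett}(\fX_{C}^{\textup{ad}},\Z_p)\otimes_{\Z_p}\Bcris$. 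Taking $\textup{Gal}_K$-invariants, using $\Bcris^{\textup{Gal}_K} = W(k_0)[\tfrac1p]$ and the equality of $\Bcris$-ranks on the two sides, one finds $\dim_{W(k_0)[1/p]}\bigl(H^i_{\ett}(\fX_{C}^{\textup{ad}},\Q_p)\otimes_{\Q_p}\Bcris\bigr)^{\textup{Gal}_K} = \dim_{\Q_p}H^i_{\ett}(\fX_{C}^{\textup{ad}},\Q_p)$, which is precisely the assertion that $H^i_{\ett}(\fX_{C}^{\textup{ad}},\Q_p)$ is crystalline. A secondary technical point in part (3) — beyond the isomorphism of $h_{\textup{crys}}$ — is the exact form of the Breuil--Kisin--Fargues dictionary with $\Bcris$-coefficients, which is where the good-reduction hypothesis is genuinely used.
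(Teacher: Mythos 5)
Your plan hinges on first upgrading $h_{\textup{crys}}$ to an isomorphism over $\Acris$, and that step is a genuine gap — indeed it is precisely the point the paper is engineered to avoid. The derived Nakayama argument you sketch does not go through as stated: the fibre $D$ is perfect over $\Acris$ and killed by $-\otimes^\L_{\Acris}\mO_C/p$, but the kernel $J=\ker(\Acris\to\mO_C/p)$ is not finitely generated, so neither derived $J$-completeness nor Nakayama in the form you need is available (the paper flags exactly this obstruction in the remark after Corollary \ref{cor:crystalline_comparison}), and vanishing of $D\otimes^\L\Acris/J$ only controls the support of $D$ over $V(J)$, not over the many primes of $\Acris$ not containing $J$; the appeal to ``topological nilpotence of the divided powers'' plus ``Frobenius controls the remaining directions'' is not an argument. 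Your fallback — computing both sides locally from a smooth $\Acris$-lift and matching a $q$-de Rham/Koszul complex with the de Rham complex of the lift — is essentially the hard comparison of \cite{BMS} (the relative de Rham–Witt / ``all possible coordinates'' analysis) that this paper explicitly sets out to bypass. The paper never proves, nor uses, that $h_{\crys}$ is an isomorphism: part (2) is obtained by base-changing $h_{\crys}$ along $\vartheta\colon\Acris\to W(k)$ and applying derived Nakayama over $W(k)$, where the mod-$p$ reduction is the de Rham comparison; the only ``generic'' isomorphism extracted from $h_{\crys}$ is over $\Bdrp$ (Lemma \ref{lemma:invalid}), where $\xi$-adic derived Nakayama is legitimate.

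There are further problems downstream. For part (1), finite presentation of $H^i_{\Ainf}(\fX)$ is not automatic from perfectness of $R\Gamma_{\Ainf}(\fX)$: $\Ainf$ is not noetherian (or coherent), and only the top cohomology of a perfect complex is formally a cokernel of a map of finite projectives; this is exactly why the paper runs Morrow's descending induction, which needs the freeness of the higher $H^k[\frac1p]$ to truncate. Likewise, ``torsion is $p$-power torsion by a basic property of $L\eta_\mu$'' is not a property of $L\eta_\mu$ at all — freeness of $H^i_{\Ainf}(\fX)[\frac1p]$ is the hardest point, and the paper gets it from Morrow's criterion (Lemma \ref{lemma:Morrow_criterion_for_free}) together with the rank equality of Corollary \ref{cor:same_rank}, itself deduced from Lemma \ref{lemma:invalid} and the Berthelot–Ogus isomorphism (Lemma \ref{lemma:crys_BO}), i.e.\ again only from the existence of $h_{\crys}$. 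For part (3), the route through the Breuil–Kisin–Fargues/Galois dictionary with $\Bcris$-coefficients is the original \cite{BMS} strategy and is far from a formality (Fargues's theorem as stated only concerns pairs $(T,\Xi)$ over $\Bdr$ and carries no Frobenius or Galois-descent content); the paper instead proves crystallinity by the elementary ``enough Galois invariants'' count: it builds $\alpha_{\crys}\colon H^i_{\crys}(\fX_0/W(k_0))\otimes\Bcrisp\to H^i_{\Ainf}(\fX)\otimes\Bcrisp$ from $h_{\crys}$ and Lemma \ref{lemma:crys_BO}, notes its base change to $K_0$ is the isomorphism of Corollary \ref{cor:crystalline_comparison}, and concludes $\dim_{K_0}\textup{D}_{\textup{cris}}(V)\ge\dim_{\Q_p}V$. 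So while your part (2) base-change step matches the paper, the load-bearing claim of your proposal (bijectivity of $h_{\crys}$) is unproved and unnecessary, and the auxiliary claims in parts (1) and (3) would need to be replaced by the induction, rank-counting, and invariants arguments the paper actually uses.
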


Several remarks are in order. 
\begin{mainrmk}
The $\etale$ and crystalline specialization, together with the fact that $H^i_{\Ainf}(\fX)[\frac{1}{p}]$ is a Breuil--Kisin--Fargues module, implies the main result of \cite{BMS} on torsion relations. Namely, for each $n\ge 1$, we have 
$$\textup{length}_{W(k)} \big(H^i_{\textup{crys}}(\fX_k/W(k))_{\textup{tor}} /p^n \big) \ge \textup{length}_{\Z_p} \big(H^i_{\ett}(X, \Z_p)_{\textup{tor}} /p^n \big). $$ 
\end{mainrmk}

Let us briefly comment on the proof of Theorem \ref{mainthm:BKF_and_C_cris} from Theorem \ref{mainthm:exist_map}.

\begin{mainrmk} 
\bi[--]
\item First we observe that part (2) (the crystalline comparison of $\Ainf$ cohomology) is an immediate corollary by the derived Nakayama's lemma. This in particular recovers Theorem 1.8(i) of \cite{BMS}. 
\item The map $h_{\crys}$ has the desired direction for applications to $p$-adic Hodge theory -- in particular in view of the $\Bcris$ comparison -- as it implies that ``there are enough Galois invariants'' after base change to $W(k)$.  
\item Finally let us turn to part (1), which seems to be the most subtle part, since we do not claim that $h_{\crys}$ is an isomorphism (at least we want to avoid using it). For this we perform a careful induction on the degree of cohomology following \cite{Morrow}, which reduces to show that the $\Q_p$ dimension of $H^i_{\ett} (X, \Q_p)$ agrees with the $W(k)$-rank of $H^i_{\crys} (\fX_k/W(k))$, which we again deduce from the existence of $h_{\crys}$. 
\ei
\end{mainrmk} 

\begin{mainrmk}
One defect of the $\Bcris$ comparison stated as above is that we have ignored the information on filtration (after base change to $\Bdr$). To take care of the filtration compatibility of the $\Bcris$ comparison, we consider a certain infinitesimal cohomology of the generic fiber $X$ over the pro-thickening $\Bdrp \twoheadrightarrow C$, which we discuss in the end of this introduction.  
\end{mainrmk}

\subsection{The construction of $h_{\textup{crys}}$} Now we illustrate the strategy to prove Theorem \ref{mainthm:exist_map}.  By functoriality it suffices to construct the map on affine opens $\spf R$ in the $\etale$ site $\fX_{\ett}$. The basic idea is to use quasisyntomic descent to reduce to the case of quasiregular semiperfectoid covers of $R$, which are certain well-behaved quotients of perfectoid rings by ``quasiregular ideals''. The precise definition is recalled in Section \ref{sec:QRSP}, for now it suffices to think of them as certain perfectoid variants of local complete intersections. Typical examples include 
$$k [\![x^{1/p^\infty}, y^{1/p^{\infty}}]\!]/(x-y), \quad \mO_C/p = \mO_{C^\flat}/p^{\flat}, \quad  \textup{ and } \mO_C \gr{X^{1/p^\infty}}/(X).$$ 
More precisely,  let  $A\Omega_S$  be the derived $\Ainf$-cohomology of $S$, obtained as the derived $(p, \xi)$-completion of the left Kan extension of the functor 
$$R \longmapsto A \Omega_R := R \Gamma_{\Ainf}(\spf R, L\eta_\mu R \nu_* \Ainfx),$$ from the category of $p$-completions of smooth $\mO_C$-algebras to $D(\Ainf)$. By quasisyntomic descent it suffices to construct a functorial map from the crystalline cohomology $R\Gamma_{\textup{crys}}((R/p)/\Acris)$ to $A \Omega_S \widehat \otimes^\L \Acris$. This in turn follows from the following lemma. 

\begin{mainlemma}
For a quasiregular semiperfectoid $\mO_C$-algebra $S$, its derived $\Ainf$-cohomology $A \Omega_S$ is a discrete, topologically free $\Ainf$-algebra concentrated in degree $0$. Moreover, there is a natural map of rings $A \Omega_S \widehat \otimes^\L \Acris \twoheadrightarrow S/p$, which is a PD-thickening over $\Acris \ra \mO_C/p$. In particular, $A\Omega_S \twoheadrightarrow S/p$ is an object in the crystalline site $\textup{CRIS}(R/p)_{\Acris}.$
\end{mainlemma}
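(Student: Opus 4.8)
The plan is to split the statement into three parts and handle them in the order: (i) $A\Omega_S$ is concentrated in degree $0$; (ii) $A\Omega_S$ is topologically free over $\Ainf$; (iii) the ring $A\Omega_S\widehat\otimes^\L_{\Ainf}\Acris$, together with its quotient map to $S/p$, is a PD-thickening over $\Acris \to \mO_C/p$. The whole of (i) and (ii) will be reduced to a computation modulo $\xi$, where the de Rham comparison $\gamma_{\textup{dR}}$ (which we are allowed to take as input) converts the question into a statement about derived de Rham cohomology of the quasiregular semiperfectoid ring $S$.

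For (i): by construction $A\Omega_S$ is derived $(p,\xi)$-complete, hence derived $\xi$-complete, and $\xi$ is a non-zero-divisor on $\Ainf$. Running the long exact cohomology sequence for multiplication by $\xi$ on $A\Omega_S$, and using that a derived $\xi$-complete module on which $\xi$ acts invertibly must vanish, one sees that $A\Omega_S$ is concentrated in degree $0$ as soon as $A\Omega_S\otimes^\L_{\Ainf,\theta}\mO_C$ is. To compute the latter I would left Kan extend $\gamma_{\textup{dR}}$ over the category of $p$-completed smooth $\mO_C$-algebras; since base change along $\theta$ and derived $p$-completion both commute with left Kan extension, this identifies $A\Omega_S\otimes^\L_{\Ainf,\theta}\mO_C$ with the ($p$-completed, Hodge-completed) derived de Rham cohomology $\widehat{L\Omega}_{S/\mO_C}$. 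The quasiregularity of $S$ (Section~\ref{sec:QRSP}) says precisely that $L_{S/\mO_C}$ is, after $p$-completion, concentrated in homological degree $1$ and flat there, say $\simeq M[1]$; hence by the décalage isomorphism $\wedge^i(M[1])\simeq(\Gamma^i M)[i]$, the $i$-th Hodge graded piece $\wedge^i_S L_{S/\mO_C}[-i]$ of $\widehat{L\Omega}_{S/\mO_C}$ is the divided-power module $\Gamma^i_S M$, sitting in degree $0$. Since the transition maps of the tower $\{\widehat{L\Omega}_{S/\mO_C}/\textup{Fil}^n\}$ are surjective on $H^0$ there is no $\varprojlim^1$, so $\widehat{L\Omega}_{S/\mO_C}$, and therefore $A\Omega_S$, is discrete. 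Its commutative ring structure is inherited from the $E_\infty$-structures on the $A\Omega_R$ and is an ordinary ring because $A\Omega_S$ lives in degree $0$. For (ii) I would reduce one more step: $A\Omega_S/(p,\xi)\simeq\widehat{L\Omega}_{(S/p)/(\mO_C/p)}$ has graded pieces the $\Gamma^i$ of a free $S/p$-module, and $S/p$ is free over $\mO_C/p$ (from the explicit form of these covers), so $A\Omega_S/(p,\xi)$ is a free $\mO_C/p$-module; lifting a basis produces a map from a $(p,\xi)$-completed free $\Ainf$-module into $A\Omega_S$ whose cone is derived $(p,\xi)$-complete and vanishes modulo $(p,\xi)$, hence is zero. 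In particular $A\Omega_S\widehat\otimes^\L_{\Ainf}\Acris$ is the $(p,\xi)$-adic completion of a free $\Acris$-module — so it is discrete, and $p$-torsion-free because $\Acris$ is.

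For (iii), write $B:=A\Omega_S\widehat\otimes^\L_{\Ainf}\Acris$. I would assemble the surjection $B\twoheadrightarrow S/p$ from the two maps $A\Omega_S\to A\Omega_S/\xi=\widehat{L\Omega}_{S/\mO_C}\to S\to S/p$ (the middle arrow being the projection onto $\textup{gr}^0$ of the Hodge filtration) on the $\Ainf$-factor and $\Acris\xrightarrow{\theta}\mO_C\to\mO_C/p\to S/p$ on the $\Acris$-factor, which agree over $\Ainf$; surjectivity already holds on the $\Ainf$-factor. It then remains to recognize $\ker(B\to S/p)$ as a PD-ideal compatible with the canonical divided powers on $pB$. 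Here $\ker(B\to S/p)\supseteq pB+J\cdot B$, where $J:=\ker(\Acris\xrightarrow{\theta}\mO_C)$ is the canonical PD-ideal of $\Acris$; this sum is a PD-ideal of $B$ (both structures are restrictions of divided powers computed in the $p$-torsion-free ring $\Acris[1/p]$, hence compatible), the quotient $B/(pB+JB)$ is canonically $A\Omega_S/(p,\xi)\simeq\widehat{L\Omega}_{(S/p)/(\mO_C/p)}$, and the residual kernel of the map to $S/p$ is the Hodge ideal $\textup{Fil}^1\widehat{L\Omega}_{(S/p)/(\mO_C/p)}$ — a PD-ideal, since the Hodge filtration on derived de Rham cohomology is a PD-filtration (its associated graded is the divided-power algebra $\bigoplus_i\Gamma^i M$). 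Pulling this back along the PD-surjection $B\to B/(pB+JB)$ exhibits $\ker(B\to S/p)$ as a PD-ideal compatible with $pB$, so $(S/p,B)$ is a PD-thickening over $\Acris\to\mO_C/p$, i.e.\ an object of $\textup{CRIS}(R/p)_{\Acris}$. Functoriality in $R$ is evident at each step.

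The main obstacle is the mod-$\xi$ computation in the first step: this is where the hypotheses on $S$ are genuinely used, and it rests on having the right structural facts about derived de Rham cohomology at hand — the décalage description of the Hodge-graded pieces as divided powers of the ($p$-completely flat, degree-$1$) cotangent complex of a quasiregular semiperfectoid ring, and the fact that the Hodge filtration carries divided powers. Once $A\Omega_S$ is known to be discrete and topologically free, the assertions about $A\Omega_S\widehat\otimes^\L_{\Ainf}\Acris$ are essentially bookkeeping with PD-ideals.
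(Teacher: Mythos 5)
Your reduction of discreteness to the mod-$\xi$ statement (derived $\xi$-completeness plus derived Nakayama on each cohomology group) is fine, but the mod-$\xi$ identification is wrong: base change along $\theta$ commutes with left Kan extension, and the left Kan extension of $R\mapsto A\Omega_R/\xi\simeq\Omega^\bullet_{R/\mO_C}$ is by definition the ($p$-completed) \emph{non}-Hodge-completed derived de Rham cohomology $\L\Omega_{S/\mO_C}$ --- this is exactly the de Rham comparison stated in Section~\ref{sec:QRSP} --- and for quasiregular semiperfectoid $S$ this differs genuinely from the Hodge-completed object you use. This is not a cosmetic slip: your discreteness argument runs through the tower $\{\,\cdot\,/\textup{Fil}^n\}$, which computes the Hodge-completed object rather than $A\Omega_S/\xi$, and your freeness deduction ``free graded pieces $\Rightarrow$ free'' fails for a complete decreasing filtration (an inverse limit of free modules need not be free). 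The correct route, and the one the paper takes, is the \emph{conjugate} filtration on $\L\Omega_{(S/p)}$: it is increasing and exhaustive with graded pieces $\wedge^i\L_{(S/p)}[-i]\simeq\Gamma^i$ of a projective $S/p$-module (projectivity of $S/p$ over $\mO_C/p$ uses the ``proj'' variant of the site), so $\L\Omega_{(S/p)}$ is free, and then your basis-lifting step (identical to the paper's) gives topological freeness; discreteness of $A\Omega_S$ itself the paper quotes from \cite{THH}.

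The more serious gap is in your step (iii). Divided powers do not pull back along surjections: from a PD structure on the Hodge ideal of $B/(pB+JB)\simeq\L\Omega_{(S/p)}$ you only learn, for $x\in\ker(B\to S/p)$, that $x^p\in p!\,B+(p,J)B$, and elements of $JB$ (e.g.\ multiples of $\xi$) are not divisible by $p$; since $B$ is $p$-torsion free, what must actually be produced is the divisibility $x^p\in pB$ for $x\in\ker(A\Omega_S\to S)$, and your proposal never supplies it. That divisibility is precisely the content of the paper's proof of Lemma~\ref{lemma:PD_thickening}: from $\bar x^p=0$ in $\L\Omega_{(S/p)}$ one deduces $\varphi(\bar x)\in(\xi)$ modulo $(p,\xi^p)$, and an iteration shows $\ker(\theta)\subseteq(p^{1/p},\xi)$, whence $x^p\in p\cdot A\Omega_S\widehat\otimes^\L\Acris$. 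Moreover, your asserted input that ``the Hodge filtration on derived de Rham cohomology is a PD-filtration'' does not follow from its associated graded being a divided power algebra; it is essentially the derived de Rham--crystalline comparison (Proposition 8.12 of \cite{THH}), a nontrivial result the paper deliberately avoids (see Remark~\ref{remark:alternative_construction_using_THH}). So even granting that input, the pullback maneuver does not close the argument, and the heart of the lemma --- the explicit $p$-divisibility computation --- is missing from your proposal.
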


\subsection{The $\Bdrp$-cohomology of the generic fiber} 

As alluded to previously, to analyze the individual cohomology groups $H^*_{\Ainf} (\fX)$ further we need a variant of $h_\textup{crys}$ over $\Bdrp$. This is provided by reformulating the $\Bdrp$-cohomology of \cite{BMS} via a certain infinitesimal site. 

\begin{mainthm} \label{mainthm:exist_map_dR}  Let $X$ be a smooth proper rigid analytic variety over $C$. There is a cohomology $R\Gamma_{\textup{inf}}(X/\Bdrp)$ with $\Bdrp$ coefficients, satisfying $$R\Gamma_{\textup{inf}}(X/\Bdrp) \otimes^\L C \cong R \Gamma_{\textup{dR}}(X/C).$$ Moreover, there exists a functorial map
$$h_\textup{dR}: R\Gamma_{\textup{inf}}(X/\Bdrp) \longrightarrow R \Gamma_{\pet} (X, \mathbb{B}_{\textup{dR}}^+),$$
where $\mathbb{B}_{\textup{dR}}^+$ is the period sheaf on $X_{\pet}$ defined in \cite{Scholze}. Further suppose $X = \fX_{C}^{\textup{ad}}$ is the generic fiber of a smooth proper formal scheme $\fX$ as above, then there is a canonical isomorphism $$R \Gamma_{\Ainf} (\fX) \otimes^\L \Bdrp \isom R\Gamma_{\textup{inf}}(X/\Bdrp),$$
which identifies $H^i_{\textup{inf}}(X/\Bdrp)$ with the $\Bdrp$-lattice $H^i_{\Ainf}(\fX) \otimes \Bdrp$ in 
$H^i_{\ett} (X, \Z_p) \otimes \Bdr.$
Finally, $h_\textup{dR}$ is compatible with $h_{\textup{crys}}$ under the $\etale$ comparison 
in a suitable sense.
\end{mainthm}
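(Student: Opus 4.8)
The plan is to build $R\Gamma_{\textup{inf}}(X/\Bdrp)$ as the cohomology of a structure sheaf on an infinitesimal site of $X$ relative to the pro-thickening $\Bdrp \twoheadrightarrow C$, and then to transplant the strategy of Theorem~\ref{mainthm:exist_map} --- quasisyntomic descent to quasiregular semiperfectoid algebras together with the Main Lemma --- into this $\Bdrp$-linear setting. Concretely, I would let the objects of $(X/\Bdrp)_{\textup{inf}}$ be pairs $(U \hookrightarrow T)$ with $U$ an affinoid in $X_{\ett}$ and $U \hookrightarrow T$ a thickening of adic spaces over some $\Bdrp/\xi^n$ with $T \times_{\Bdrp/\xi^n} C = U$ (since $\Bdrp$ is a $\Q$-algebra, divided powers are automatic and play no role), organised into the $\xi$-adic pro-system; the structure sheaf $\mO_{\textup{inf}}$ sends $(U,T)$ to $\mO(T)$, and $R\Gamma_{\textup{inf}}(X/\Bdrp) := R\Gamma((X/\Bdrp)_{\textup{inf}}, \mO_{\textup{inf}})$. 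On a small affinoid $U = \spa(A,A^+)$ that is \'etale over a torus, a deformation $\widetilde A$ of $A$ over $\Bdrp$ exists and is unique up to non-canonical isomorphism by formal smoothness, and the infinitesimal Poincar\'e lemma identifies $R\Gamma_{\textup{inf}}(U/\Bdrp)$ with the continuous de Rham complex $\Omega^{\bullet}_{\widetilde A/\Bdrp}$, which reduces modulo $\xi$ to $\Omega^{\bullet}_{A/C}$. Globalising yields $R\Gamma_{\textup{inf}}(X/\Bdrp) \otimes^{\L} C \cong R\Gamma_{\textup{dR}}(X/C)$; as $R\Gamma_{\textup{inf}}(X/\Bdrp)$ is derived $\xi$-complete, derived Nakayama shows it is a perfect $\Bdrp$-complex when $X$ is proper.

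Next I would prove $R\Gamma_{\Ainf}(\fX) \otimes^{\L} \Bdrp \cong R\Gamma_{\textup{inf}}(X/\Bdrp)$ for $X = \fX_C^{\textup{ad}}$. Both sides are derived $\xi$-complete with mod-$\xi$ reduction $R\Gamma_{\textup{dR}}(X/C) \cong R\Gamma_{\textup{dR}}(\fX) \otimes^{\L}_{\mO_C} C$ --- for the left side via the de Rham comparison $\gamma_{\textup{dR}}$. To produce a canonical isomorphism I would run quasisyntomic descent on $\fX_{\qSyn}$, reducing to matching the two functors on a quasiregular semiperfectoid cover $S$ of an affine open $\spf R \subset \fX$. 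By the Main Lemma, $A\Omega_S$ is discrete and topologically free over $\Ainf$, so $A\Omega_S \widehat\otimes_{\Ainf}^{\L} \Bdrp$ is a discrete, $\xi$-torsion-free, $\xi$-adically complete flat $\Bdrp$-algebra whose reduction modulo $\xi$ is the (discrete, for quasiregular semiperfectoid $S$) derived de Rham cohomology $L\Omega_{S[1/p]/C}$ of the generic fibre of $\spf S$; one then identifies $A\Omega_S \widehat\otimes \Bdrp$ with the $\Bdrp$-infinitesimal cohomology of that generic fibre, the point being that a $\xi$-adic flat deformation of $L\Omega_{S[1/p]/C}$ is rigid once the period datum is recorded. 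Pushing this back through the descent gives the isomorphism. The lattice assertion is then formal: $\Bdrp$ is a $\Q_p$-algebra, so $H^i_{\Ainf}(\fX) \otimes_{\Ainf} \Bdrp = H^i_{\Ainf}(\fX)[1/p] \otimes_{\Ainf[1/p]} \Bdrp$ is finite free over $\Bdrp$ (as $H^i_{\Ainf}(\fX)$ is a Breuil--Kisin--Fargues module by Theorem~\ref{mainthm:BKF_and_C_cris}(1)); hence the Tor spectral sequence degenerates, $H^i_{\textup{inf}}(X/\Bdrp) \cong H^i_{\Ainf}(\fX) \otimes \Bdrp$, and inverting $\xi$ --- using $\mu = (\textup{unit})\cdot\xi$ in $\Bdrp$ and the \'etale comparison --- exhibits it as a $\Bdrp$-lattice in $H^i_{\ett}(X,\Z_p) \otimes_{\Z_p} \Bdr$.

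The map $h_{\textup{dR}}$ comes from Scholze's period sheaf: the surjection $\mathbb{B}_{\textup{dR}}^+ \twoheadrightarrow \widehat{\mO}_X$ on $X_{\pet}$ equips every affinoid perfectoid $U$ with a functorial $\xi$-adic thickening $\mathbb{B}_{\textup{dR}}^+(U)$ of $\mO(U)$ over $\Bdrp$, i.e. with an object of the infinitesimal site enlarged to allow pro-\'etale $U$'s, on which $\mO_{\textup{inf}}$ takes the value $\mathbb{B}_{\textup{dR}}^+(U)$. Choosing a pro-\'etale affinoid perfectoid cover $\widetilde X \to X$ (for instance the generic fibre of a perfectoid cover of a formal model), the \v{C}ech nerve $(\widetilde X^{(\bullet)}, \mathbb{B}_{\textup{dR}}^+(\widetilde X^{(\bullet)}))$ is a cosimplicial object of this site, and restriction of $\mO_{\textup{inf}}$ along it gives $h_{\textup{dR}} \colon R\Gamma_{\textup{inf}}(X/\Bdrp) \to \check C(\widetilde X^{(\bullet)}/X, \mathbb{B}_{\textup{dR}}^+) = R\Gamma_{\pet}(X, \mathbb{B}_{\textup{dR}}^+)$; independence of the cover and functoriality in $X$ are routine. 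For compatibility with $h_{\textup{crys}}$, one has, exactly as in the previous paragraph, a base-change isomorphism $R\Gamma_{\crys}((\fX_{\mO_C/p})/\Acris) \widehat\otimes_{\Acris} \Bdrp \cong R\Gamma_{\textup{inf}}(X/\Bdrp)$ (both sides are $\xi$-complete with mod-$\xi$ reduction $R\Gamma_{\textup{dR}}(X/C)$ and agree on quasiregular semiperfectoid covers); under this and the isomorphism of the previous paragraph, $h_{\textup{dR}}$ is identified with the composition of $(\delta\circ h_{\textup{crys}}) \widehat\otimes_{\Acris}\Bdrp$ with the map induced by the canonical map $\mathbb{A}_{\textup{crys}} \to \mathbb{B}_{\textup{dR}}^+$ of pro-\'etale sheaves, and chasing these identifications on quasiregular semiperfectoid covers and against the \'etale comparison yields the precise statement.

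The main obstacle, as with the Main Lemma, is the non-Noetherian $\xi$-adic site-theoretic input: one must verify that on a quasiregular semiperfectoid $S$ the algebra $A\Omega_S \widehat\otimes \Bdrp$ genuinely computes an intrinsically defined $\Bdrp$-infinitesimal cohomology of the generic fibre, so that the quasisyntomic descent is legitimate and the output is model-independent --- this is the $\Bdrp$-analogue of the Main Lemma, and is exactly where the de Rham comparison is used in an essential way. A second, orthogonal issue, needed to refine the $\Bcris$-comparison of Theorem~\ref{mainthm:BKF_and_C_cris}(3) so that it respects filtrations after $\otimes\Bdr$, is to equip $R\Gamma_{\textup{inf}}(X/\Bdrp)$ with a filtration whose graded pieces recover the (shifted, Tate-twisted) Hodge cohomology and to check that $h_{\textup{dR}}$ is filtered; I would extract this from the $\xi$-adic filtration on $\mathbb{B}_{\textup{dR}}^+$ together with the Poincar\'e-lemma computation of the first paragraph.
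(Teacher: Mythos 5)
Your definition of the infinitesimal site, the Poincar\'e-lemma computation giving $R\Gamma_{\textup{inf}}(X/\Bdrp)\otimes^\L C\cong R\Gamma_{\textup{dR}}(X/C)$, and the construction of $h_{\textup{dR}}$ by viewing the thickenings $\mathbb{B}_{\textup{dR}}^+(S,S^+)\twoheadrightarrow S$ attached to affinoid perfectoids as objects of the infinitesimal site all agree in substance with the paper. The genuine gap is the middle step, and you have flagged it yourself: the identification of $A\Omega_S\widehat\otimes^\L\Bdrp$ with ``the $\Bdrp$-infinitesimal cohomology of the generic fibre of $\spf S$'' for quasiregular semiperfectoid $S$ is neither defined nor proved in your framework. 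Your $R\Gamma_{\textup{inf}}(-/\Bdrp)$ is built from petit (smooth) affinoids in $X_{\ett}$; for quasiregular semiperfectoid $S$ the ring $S[\frac{1}{p}]$ is not such an object, so you would first have to extend the definition and then prove quasisyntomic descent for the infinitesimal cohomology, which is asserted but not established. More seriously, the ``rigidity of flat $\xi$-adic deformations'' argument cannot do the work you assign to it: uniqueness of a lift up to non-canonical isomorphism does not produce a canonical, functorial comparison map, and without an actual map you cannot identify $H^i_{\textup{inf}}(X/\Bdrp)$ with the \emph{specific} lattice $H^i_{\Ainf}(\fX)\otimes\Bdrp$ inside $H^i_{\ett}(X,\Z_p)\otimes\Bdr$ --- any two free $\Bdrp$-lattices are abstractly isomorphic, so the entire content of that clause is the map itself. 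The same defect recurs in your compatibility step, where ``both sides agree on quasiregular semiperfectoid covers'' is again an agreement of objects rather than a morphism.

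The paper closes this gap without ever evaluating the infinitesimal cohomology on quasiregular semiperfectoid covers, using two maps that already exist. First, $h_{\crys}\otimes\Bdrp\colon R\Gamma_{\crys}(\fX_{\mO_C/p}/\Acris)\otimes^\L_{\Acris}\Bdrp\to R\Gamma_{\Ainf}(\fX)\otimes^\L\Bdrp$ is an isomorphism because both sides are perfect, hence derived $\xi$-complete, and it reduces mod $\xi$ to the de Rham comparison (Lemma \ref{lemma:invalid}). Second, there is a direct site-theoretic map $b\colon R\Gamma_{\crys}(\fX_{\mO_C/p}/\Acris)\widehat\otimes^\L\Bdrp\to R\Gamma_{\textup{inf}}(X/\Bdrp)$ induced by the continuous functor sending a crystalline thickening $(B\twoheadrightarrow B/J)$ to the infinitesimal thickening $(\widetilde B\twoheadrightarrow (B/J)[\frac{1}{p}])$, with $\widetilde B$ a completion of $\varprojlim (B\otimes_{\Acris}\Bdrp/\xi^m)$; it is an isomorphism again by reduction mod $\xi$ and derived Nakayama (Lemma \ref{lemma:compare_inf_coh_with_abs_crys}). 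The composite $b\circ(h_{\crys}\otimes\Bdrp)^{-1}$ is the canonical isomorphism, and because everything is induced by functoriality of sites, the compatibility of $h_{\textup{dR}}$ with $h_{\crys}$ reduces (Lemma \ref{lemma:base_change_compatible}) to the commutativity of a triangle over maps $R\to S$ with $S$ perfectoid, where both arrows are restriction in the crystalline site to the object $\Acris(S)\to S$. To salvage your descent-based plan you would need to (a) define $\Bdrp$-infinitesimal cohomology for generic fibres of quasiregular semiperfectoid rings, (b) prove it is a quasisyntomic sheaf extending the affinoid theory, and (c) construct an actual natural map comparing it with $A\Omega_S\widehat\otimes\Bdrp$ --- at which point you would essentially have reconstructed the crystalline-site argument.
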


Finally, from this we prove the rest of the main theorem of \cite{BMS}. 

\begin{maincor} \label{maincor:recover} Let $\fX_{\mO_K}$ be a smooth proper formal scheme over $\mO_K$ as in Theorem \ref{mainthm:BKF_and_C_cris} (3).  
\be
\item If $H^*_{\crys}(\fX_k/W(k))$ is torsion free for both $* = i$ and $i +1$, then the integral crystalline cohomology $H^i_{\crys}(\fX_k)$ with its Frobenius $\phi$ can be recovered from the $\textup{Gal}_K$-module $H^i_{\ett}(\fX_C^{\textup{ad}}, \Z_p).$
\item The $\Bcris$ comparison in Theorem \ref{mainthm:BKF_and_C_cris} 
is compatible with filtrations after base change to $\Bdr$. In other words,  $\textup{D}_{\textup{cris}} \big(H^i_{\ett} (\fX_{C}^{\textup{ad}}, \Q_p) \big)$ is given by $\Big(H^i_{\crys}(\fX_k/W(k))_\Q, \phi, \textup{Fil}\Big)$ as a filtered $\varphi$-module, where $\textup{Fil}$ is the Hodge filtration on $H^i_{\textup{dR}}(X_{K}/K)$. 
\ee
\end{maincor}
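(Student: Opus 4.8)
This part combines Fargues's classification of Breuil--Kisin--Fargues modules with Kisin's theory of crystalline $\Z_p$-lattices. The plan is to reconstruct the Breuil--Kisin--Fargues module $M := H^i_{\Ainf}(\fX)$ with its Frobenius purely from the $\textup{Gal}_K$-module $T := H^i_{\ett}(\fX_C^{\textup{ad}}, \Z_p)$, and then read off $H^i_{\crys}(\fX_k/W(k))$ by Theorem~\ref{mainthm:BKF_and_C_cris}(2). First I would observe that the torsion-freeness of $H^*_{\crys}(\fX_k/W(k))$ for $* \in \{i, i+1\}$ forces $M$ to be a \emph{finite free} $\Ainf$-module: $R\Gamma_{\Ainf}(\fX)$ is a perfect $\Ainf$-complex with Breuil--Kisin--Fargues cohomology (Theorem~\ref{mainthm:BKF_and_C_cris}(1)), and by the structure theory of such complexes (\cite{BMS}) the torsion submodule of $M$ vanishes once $H^i_{\crys}$ and $H^{i+1}_{\crys}$ are torsion-free, via the identification of Theorem~\ref{mainthm:BKF_and_C_cris}(2). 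Since $\fX_{\mO_K}$ is defined over $\mO_K$, the ring $\Ainf$ and the elements $\mu, \xi$ are $\textup{Gal}_K$-stable, so $M$ carries a semilinear $\textup{Gal}_K$-action and all the relevant comparisons become $\textup{Gal}_K$-equivariant. By Fargues's classification, the free module $M$ is equivalent to the pair $\big(T, \Xi\big)$, where $T = (M[1/\mu])^{\varphi = 1}$ and $\Xi = M \otimes_{\Ainf} \Bdrp$ is a $\Bdrp$-lattice in $T \otimes_{\Z_p} \Bdr$; the $\etale$ comparison identifies $T$ with $H^i_{\ett}(\fX_C^{\textup{ad}}, \Z_p)$, and Theorem~\ref{mainthm:exist_map_dR} identifies $\Xi$ with $H^i_{\textup{inf}}(X/\Bdrp)$. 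So the problem reduces to recovering the lattice $\Xi$ from the $\textup{Gal}_K$-module $T$.

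For that I would use Kisin's theory. By Theorem~\ref{mainthm:BKF_and_C_cris}(3), the representation $V := T[1/p]$ is crystalline; so, after fixing a uniformizer $\pi$ of $\mO_K$ and a compatible system $(\pi^{1/p^n})$ defining a map from $\mathfrak{S} := W(k_0)[\![u]\!]$ to $\Ainf$ sending $u \mapsto [\pi^{\flat}]$, Kisin's fully faithful functor from crystalline $\Z_p$-lattices to Breuil--Kisin modules attaches to $T$ a module $\mathfrak{M}(T)$ over $\mathfrak{S}$, functorially in $T$. The key geometric input --- the Breuil--Kisin descent of $\Ainf$-cohomology, namely $M \cong \mathfrak{M}(T) \otimes_{\mathfrak{S}} \Ainf$ as $\varphi$-modules, which I would import from \cite{BMS} --- then shows that $\mathfrak{M}(T)$, and hence $H^i_{\crys}(\fX_{k_0}/W(k_0))$ with its Frobenius $\phi$ (recovered from $\mathfrak{M}(T)$ by specializing $u \mapsto 0$, up to the standard Frobenius twist), depends only on $T$. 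Finally $H^i_{\crys}(\fX_k/W(k)) = H^i_{\crys}(\fX_{k_0}/W(k_0)) \otimes_{W(k_0)} W(k)$ by flat base change --- which also justifies the torsion-freeness of $H^*_{\crys}(\fX_{k_0}/W(k_0))$ used above --- and agrees with $M \otimes_{\vartheta} W(k)$ by Theorem~\ref{mainthm:BKF_and_C_cris}(2). The main obstacle is the Breuil--Kisin descent of $M$: identifying the lattice $\Xi = H^i_{\textup{inf}}(X/\Bdrp)$ with the one cut out by $\mathfrak{M}(T)$ and matching up the Frobenius twists is the only non-formal step, and it is exactly where the discreteness of the valuation on $K$ enters.

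\textbf{Part (2).} This part routes the $\Bcris$-comparison of Theorem~\ref{mainthm:BKF_and_C_cris}(3) through the $\Bdrp$-cohomology of Theorem~\ref{mainthm:exist_map_dR}; no torsion-freeness is assumed, and after inverting $p$ the relevant modules are automatically free. The isomorphism $R\Gamma_{\Ainf}(\fX) \otimes^{\L} \Bdrp \cong R\Gamma_{\textup{inf}}(X/\Bdrp)$, the map $h_{\textup{dR}}\colon R\Gamma_{\textup{inf}}(X/\Bdrp) \to R\Gamma_{\pet}(X, \mathbb{B}_{\textup{dR}}^+)$, and Scholze's comparisons over the period sheaves together exhibit $H^i_{\Ainf}(\fX) \otimes \Bdrp$ as a single $\Bdrp$-lattice sitting inside both $H^i_{\ett}(X, \Q_p) \otimes_{\Q_p} \Bdr$ and $H^i_{\textup{dR}}(X/C) \otimes_C \Bdr$, the identification between the latter two being Scholze's filtered de Rham comparison (\cite{Scholze}); and since $X = X_K \otimes_K C$, the Hodge filtration on $H^i_{\textup{dR}}(X/C)$ descends to the one on $H^i_{\textup{dR}}(X_K/K)$. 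The decisive point is the last assertion of Theorem~\ref{mainthm:exist_map_dR}: because $h_{\textup{dR}}$ is compatible with $h_{\crys}$ under the $\etale$ comparison, the $\Bcris$-comparison --- built from $h_{\crys}$ and Theorem~\ref{mainthm:BKF_and_C_cris}(2) --- coincides, after $\otimes_{\Bcris} \Bdr$, with the $\Bdr$-de Rham comparison just described. Taking $\textup{Gal}_K$-invariants of the ($\textup{Gal}_K$- and $\varphi$-equivariant) $\Bcris$-comparison then gives $\textup{D}_{\textup{cris}}\big(H^i_{\ett}(\fX_C^{\textup{ad}}, \Q_p)\big) \cong H^i_{\crys}(\fX_{k_0}/W(k_0))_{\Q}$ as $\varphi$-modules over $W(k_0)[1/p]$ (using $(\Bcris)^{\textup{Gal}_K} = W(k_0)[1/p]$), and the filtration on $\textup{D}_{\textup{dR}} \cong \textup{D}_{\textup{cris}} \otimes_{W(k_0)[1/p]} K$, read off from the $\Bdr$-comparison, is forced to be the Hodge filtration on $H^i_{\textup{dR}}(X_K/K)$. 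The main obstacle in part (2) is the bookkeeping: making precise the ``suitable sense'' of the $h_{\crys}$--$h_{\textup{dR}}$ compatibility at the level of filtered complexes --- tracking filtrations simultaneously through $h_{\crys}$, $h_{\textup{dR}}$, the canonical map $\delta$, and Scholze's period sheaves $\Acris$ and $\mathbb{B}_{\textup{dR}}^+$ --- and performing the faithfully flat descent of the Hodge filtration from $C$ to $K$.
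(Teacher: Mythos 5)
Your skeleton for part (1) matches the paper's: freeness of $M=H^i_{\Ainf}(\fX)$ from torsion-freeness, Fargues's classification, identification of the lattice $\Xi$ with $H^i_{\textup{inf}}(X/\Bdrp)$, and Kisin's theory to pass from the pair $(T,\Xi)$ to the Galois module alone --- this is Theorem \ref{thm:BMS_on_recoverfromgeneric} together with the remark following it. The problem is your pivotal move of importing from \cite{BMS} the Breuil--Kisin descent $M\cong\mathfrak{M}(T)\otimes_{\mathfrak{S}}\Ainf$. First, once that statement is granted, everything before it (Fargues, the $\Bdrp$-lattice, Theorem \ref{mainthm:exist_map_dR}) is redundant: you could specialize along $\vartheta$ at once. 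Second, and more seriously, that descent is one of the main theorems of \cite{BMS}, proved there by means of their filtered $\Bdrp$- and crystalline comparisons --- precisely the machinery this paper sets out to re-derive --- so within this paper's program it is not an admissible citation; and you yourself flag ``identifying the lattice $\Xi$ with the one cut out by $\mathfrak{M}(T)$'' as the only non-formal step without supplying it. The paper cites only Proposition 4.34 of \cite{BMS} (Kisin's construction, a statement purely about lattices in crystalline Galois representations), and the geometric input is internal: $\Xi=H^i_{\textup{inf}}(X/\Bdrp)$ by Corollary \ref{cor:Bdrp_lattice}, identified with the Hodge-filtration lattice of $\textup{D}_{\textup{dR}}$ via the filtered comparison of the last section. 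A smaller misattribution: torsion-freeness of $H^i_{\crys}$ alone already gives freeness of $H^i_{\Ainf}(\fX)$ (Remark \ref{remark:torsion}); the hypothesis in degree $i+1$ is what kills the $\mathrm{Tor}^1$-term in Lemma \ref{lemma:relating_Tor_and_crystalline}, and that is the step needed to pass from the derived comparison to $H^i_{\crys}(\fX_k/W(k))\cong H^i_{\Ainf}(\fX)\otimes_{\vartheta}W(k)$, which you assert without locating.

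For part (2) your route is the paper's: base change the $\Bcris$ comparison to $\Bdr$, use the compatibility of $h_{\crys}$ with $h_{\textup{dR}}$ (Lemma \ref{lemma:base_change_compatible}), and conclude via Scholze's filtered de Rham comparison. But the decisive step --- that the composite $R\Gamma_{\textup{dR}}(X_K)\otimes_K\Bdr\cong R\Gamma_{\textup{inf}}(X_C/\Bdrp)\otimes\Bdr\xrightarrow{h_{\textup{dR}}}R\Gamma_{\pet}(X_C,\mathbb{B}_{\textup{dR}})$ agrees with Scholze's map, the one known to be a filtered isomorphism --- is not mere bookkeeping: the paper proves it by an explicit computation on very small affinoids, lifting $T_i\mapsto[T_i^\flat]$ and comparing with $\mO\mathbb{B}_{\textup{dR}}^+\cong\mathbb{B}_{\textup{dR}}^+[\![X_i]\!]$ through the $\mathbb{B}_{\textup{dR}}^+$-Poincar\'e lemma. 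Knowing only that $H^i_{\Ainf}(\fX)\otimes\Bdrp$ is ``a lattice sitting inside both'' does not by itself force the induced filtration on $\textup{D}_{\textup{dR}}$ to be the Hodge filtration; that is exactly what this agreement of maps provides. Finally, your faithfully flat descent of the Hodge filtration from $C$ to $K$ is handled differently (and more cleanly) in the paper, by working with the infinitesimal site over $K$ itself and the base-change isomorphism $R\Gamma_{\textup{inf}}(X_K/K)\otimes_K\Bdrp\cong R\Gamma_{\textup{inf}}(X_C/\Bdrp)$, which keeps Galois equivariance transparent.
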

 
 
\subsection*{Conventions} 
We use the language of $\infty$-categories. In particular, the derived categories considered here are the natural $\infty$-categorical enhancements and algebra objects in derived categories are $E_\infty$-algebras. Moreover we do not distinguish derived categories of sheaves and sheaves valued in derived categories. 
For a category $\mC$, we denote by $\mC^{\circ}$ the indiscrete site on $\mC$, in other words, the only non-empty cover allowed in the topology is identity. 
In this article, a Tate algebra over a $p$-adic field $K$ refers to classical Tate algebras, which are quotients of $K\gr{t_1, ..., t_d}$. A Huber ring with a topological nilpotent unit is called a Tate Huber ring. Finally, for an $\mO_C$-algebra $S$, we write $\Omega_{(S/p)}$ (resp. $\L \Omega_{(S/p)}$) to denote $\Omega_{(S/p)/(\mO_C/p)}$ (resp.  $\L\Omega_{(S/p)/(\mO_C/p)}$) to ease notations. 

\subsection*{Acknowledgements} 
The author is grateful to Bhargav Bhatt for numerous helpful correspondences and discussions on this subject. His comments and suggestions improved various parts of the original writeup. In addition, he would like to thank Lin Chen, Kiran Kedlaya, Mark Kisin, Shizhang Li, Akhil Mathew, Matthew Morrow and Sasha Petrov for helpful discussions in preparing this article.


\section{$A \Omega_S$ for quasisyntomic rings} \label{sec:QRSP}
 
\subsection{The quasisyntomic site}  

Recall from \cite{THH} the following definitions. 
\begin{definition} 
A  ring $A$ is quasisyntomic if  it is $p$-complete with bounded $p^\infty$-torsion and the base change of the cotangent complex $\L_{A/\Z_p} \otimes_A^\L A/p\in D(A/p)$ has Tor amplitude in $[-1, 0]$.  
\end{definition} 
Examples of quasisyntomic rings include $p$-completions of smooth algebras over perfectoid rings, 
and $p$-complete noetherian local complete intersections. In fact, by a result of Avramov, the latter essentially gives all noetherian quasisyntomic rings (see Theorem 4.13 in \textit{loc.cit}.).  

\begin{definition} 
Let $A, B$ be $p$-complete rings with bounded $p^\infty$-torsion. A morphism $f: A \ra B$ is quasisyntomic if
\bi[--]
\item $B \otimes_A^\L A/p \in D(A/p)$ lives in degree $0$ and is flat over $A/p$; 
\item $\L_{B/A} \otimes_B^\L B/p \in D(B/p)$ has Tor amplitude $[-1, 0]$.  
\ei
The morphism $f$ is proj(= projective)-quasisyntomic if 
\bi[--]
\item both $A$ and $B$ are $p$-torsion free;
\item $B/p$ is projective over $A/p$; 
\item $\L_{(B/p)/(A/p)} \in D(B/p)$ has projective amplitude in $[-1, 0]$.  
\ei 
\end{definition} 

We denote by $\qSyn_{\mO_C}^{\textup{op}}$ the site whose underlying category is the opposite category of quasisyntomic $\mO_C$-algebras, with topology generated by quasisyntomic covers. Let $\qSyn_{\mO_C}^{\textup{proj}} \subset \qSyn_{\mO_C}$ be the full subcategory spanned by proj-quasisyntomic $\mO_C$-algebras, and $\qSyn_{\mO_C}^{\textup{proj,op}}$ be the site whose topology is generated by proj-quasisyntomic covers.  By Lemma 4.16 (and Variant 4.35) in \textit{loc.cit.}, $\qSyn_{\mO_C}^{\textup{op}}$ and $\qSyn_{\mO_C}^{\textup{proj,op}}$ indeed form sites, in particular, the property of being quasisyntomic is preserved under quasisyntomic maps.  

\subsection{Quasiregular semiperfectoid rings} 
Now we look at a particularly useful basis for the topology of $\qSyn_{\mO_C}^{\textup{op}}$ (resp. $\qSyn_{\mO_C}^{\textup{proj,op}}$). 
\begin{definition} A ring $S$ is quasiregular semiperfectoid if it is quasisyntomic and there exists a surjective map $R_{\textup{perfd}} \twoheadrightarrow S$ from a perfectoid ring $R_{\textup{perfd}}$.
\end{definition}  The latter condition implies that $S$ is semiperfect (i.e. the Frobenius on $S/pS$ is surjective). Denote by $\qrsP_{\mO_C} \subset \qSyn_{\mO_C}$ the full subcategory of quasiregular semiperfectoid $\mO_C$-algebras, and write $\qrsP_{\mO_C}^{\textup{proj}}:= \qSyn_{\mO_C}^{\textup{proj}}\cap \qrsP_{\mO_C}.$   The category $\qrsP^{\textup{proj,op}}_{\mO_C}$ again forms a site under the topology given by proj-quasisyntomic covers. We will frequently use the fact that quasiregular semiperfectoid rings form a basis for the quasisyntomic site. More precisely,

\begin{lemma}[Proposition 4.30, Variant 4.35 in \cite{THH}] The natural restriction of sites of topoi
$\qrsP_{\mO_C}^{\textup{proj,op}} \ra \qSyn_{\mO_C}^{\textup{proj,op}}$ induces an equivalence 
$$ \text{Shv} ( \qSyn_{\mO_C}^{\textup{proj,op}}) \isom \text{Shv}( \qrsP_{\mO_C}^{\textup{proj,op}}).$$
\end{lemma}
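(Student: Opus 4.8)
The plan is to recognize this as an instance of the comparison lemma for topoi. The inclusion $u\colon \qrsP^{\textup{proj,op}}_{\mO_C}\hookrightarrow \qSyn^{\textup{proj,op}}_{\mO_C}$ should exhibit its source as a dense sub-site of its target, and for such an inclusion the restriction functor on sheaves is an equivalence once one checks that: (a) $u$ is fully faithful and carries the induced topology; (b) every object of $\qSyn^{\textup{proj,op}}_{\mO_C}$ admits a covering by objects in the image of $u$; and (c) for every proj-quasisyntomic cover $\{S\to S_i\}$ of a quasiregular semiperfectoid ring by quasiregular semiperfectoid rings and every map $S\to S'$ of quasiregular semiperfectoid rings, the pushouts $S_i\widehat\otimes^{\L}_S S'$ are discrete and again quasiregular semiperfectoid, so that $u$ is continuous. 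Condition (a) is formal: $\qrsP^{\textup{proj}}_{\mO_C}$ is by construction a full subcategory, and a family of maps out of a quasiregular semiperfectoid ring is a proj-quasisyntomic cover computed in the sub-site exactly when it is one in the ambient site.

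For (b), given a proj-quasisyntomic $\mO_C$-algebra $R$ I would choose a (possibly infinite) set of $\mO_C$-algebra generators to produce a surjection $P_0:=\mO_C\gr{X_i:i\in I}\twoheadrightarrow R$ from the $p$-completed polynomial ring, form the perfectoid ring $P_\infty:=\mO_C\gr{X_i^{1/p^\infty}:i\in I}$, and set $S:=P_\infty\widehat\otimes_{P_0}R$. The map $P_0\to P_\infty$ is a $p$-completed filtered colimit of finite free extensions, hence faithfully flat, so this completed tensor product agrees with the derived one, $S$ is discrete, $R\to S$ is faithfully flat modulo $p$, and $S$ is a quotient of the perfectoid ring $P_\infty$. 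Moreover $\L_{S/R}\simeq\L_{P_\infty/P_0}\widehat\otimes^{\L}_{P_\infty}S$, so $\L_{S/R}\otimes^{\L}_S S/p$ has Tor-amplitude in $[-1,0]$ once one knows the same for $\L_{P_\infty/P_0}$ modulo $p$, which is a direct computation of the cotangent complex of the $p$-power-root extension $P_0\to P_\infty$; together with the quasisyntomicity of $R$ this shows that $R\to S$ is a proj-quasisyntomic cover and that $S$ is quasiregular semiperfectoid.

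For (c) I would use that a proj-quasisyntomic cover is flat modulo $p$, so the completed derived tensor products in question are discrete, and that a completed tensor product over $\mO_C$ of quotients of perfectoid rings is a quotient of the completed tensor product of those perfectoid rings, which is again perfectoid. Hence the pushouts $S_i\widehat\otimes^{\L}_S S'$ are quotients of perfectoid rings and, one checks, quasisyntomic, so they lie in $\qrsP^{\textup{proj}}_{\mO_C}$. I expect (c) to be the main obstacle, and it is the reason for passing to the projective variant: $\qSyn^{\textup{proj,op}}_{\mO_C}$ is \emph{not} closed under arbitrary fibre products --- a tensor product of quasisyntomic rings need not be quasisyntomic --- so one cannot invoke a form of the comparison lemma presupposing all fibre products, and must instead verify by hand that quasisyntomic covers admit the fibre products needed for continuity and that the quasiregular semiperfectoid sub-site inherits them. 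The two genuinely technical inputs are the Tor-amplitude bound for $\L_{P_\infty/P_0}$ modulo $p$ in (b) and the fact that completed tensor products of perfectoids over $\mO_C$ are perfectoid in (c); both are standard, but they carry the actual content.
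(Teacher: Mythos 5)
The paper gives no proof of this lemma at all --- it is quoted directly from \cite{THH} (Proposition 4.30 together with Variant 4.35) --- and your proposal is essentially a reconstruction of the argument given in that reference: the dense-subsite/comparison-lemma strategy, with the quasiregular semiperfectoid cover of a quasisyntomic $R$ produced as $P_\infty\widehat\otimes_{P_0}R$ by adjoining $p$-power roots of polynomial generators, plus the stability checks for pushouts along quasisyntomic covers. So the proposal is correct and follows the same route as the cited proof.
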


\subsection{Flat/quasisyntomic descent} 

Recall from the introduction the functor from the category of $p$-adic completions of smooth $\mO_C$-algebras to derived $(p, \xi)$-complete $\Ainf$-algebras, sending
$$R \longmapsto A \Omega_R : = R \Gamma (\spf R,  L\eta_\mu R \nu_* \A_{\textup{inf}}).$$
By left Kan extension in $(p, \xi)$-complete $\Ainf$-complexes, we obtain a functor $A \Omega: R \mapsto A \Omega_R$ from all $p$-complete (simplicial) algebras to derived $(p, \xi)$-complete $\Ainf$-algebras, which satisfies the following de Rham comparison 
$$A \Omega_R \otimes^\L \Ainf/\xi \cong \L \Omega_{R/\mO_C}.$$ To relate to the crystalline cohomology, we base change to $\Acris$ to obtain a functor 
$$A \Omega \widehat\otimes^\L \Acris: R \longmapsto A \Omega_R \widehat\otimes^\L \Acris.$$
Here the completion is $(p, \mu)$-adic.\footnote{Note that the $p$-adic topology on $\Acris$ agrees with the $(p, \mu)$-adic topology since $\mu^{p-1}$ is divisible by $p$.}
In order to compute $R\Gamma_{\Ainf}(\fX) \widehat \otimes^\L \Acris$, we need the following lemma on flat (more precisely, quasisyntomic) descent.  

\begin{lemma} \label{lemma:tensor_with_Acris_is_sheaf} The functor 
$R \mapsto A \Omega_R \widehat \otimes_{\Ainf}^\L \Acris$ forms a sheaf on $   \qSyn_{\mO_C}^{\textup{proj, op}}$. 
\end{lemma}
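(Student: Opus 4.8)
The plan is to reduce the sheaf property to known descent results for $A\Omega$ itself together with flatness of $\Acris$ over $\Ainf$ in a suitable completed sense. First I would recall that $R \mapsto A\Omega_R$ is already known to satisfy quasisyntomic (indeed flat) descent as a functor to $(p,\xi)$-complete $\Ainf$-complexes; this is one of the basic properties of the left Kan extended functor appearing in \cite{BMS}, and it is the starting point. Concretely, for a proj-quasisyntomic cover $R \to R^\bullet$ (with $R^\bullet$ the associated Čech nerve), we have $A\Omega_R \isom \lim_{\Delta} A\Omega_{R^\bullet}$ in the derived category of $(p,\xi)$-complete $\Ainf$-modules. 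The goal is to show that applying $- \widehat\otimes^\L_{\Ainf} \Acris$ commutes with this limit.

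The key point is that $\Acris$ is, after $p$-completion, a sufficiently flat $\Ainf$-algebra that derived $p$-completed base change commutes with the relevant limits. I would argue as follows. Since everything is already derived $(p,\xi)$-complete, and since $\xi$ becomes a non-zero-divisor (indeed a unit up to the PD-structure considerations) in a controlled way in $\Acris$, it suffices to work modulo $p$ (or modulo a power of $p$) and then reassemble by completeness. Modulo $p$, $\Acris/p$ is a filtered colimit of finite free $\Ainf/p$-modules along flat-ish transition maps — more precisely, $\Acris$ is the $p$-completed PD-envelope, so $\Acris/p \cong (\Ainf/p)[\delta_n : n \geq 1]$-type divided-power polynomial algebra on the generator $\xi$, which is in particular a filtered colimit of free $\Ainf/p$-modules. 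Tensoring with a filtered colimit of free modules is exact and commutes with all limits of uniformly bounded (e.g. uniformly $N$-truncated after reduction mod $p$) complexes; the needed boundedness comes from the fact that $A\Omega_{R^n}$ for $R^n$ in the cover have cohomological amplitude controlled independently of $n$ — this follows since each $R^n$ is again a $p$-completion of a (pro-)smooth, or at least quasisyntomic, $\mO_C$-algebra and $A\Omega$ of such has amplitude bounded by the dimension, which is preserved along the cover. Then one passes back up: derived $p$-completion of a limit of uniformly bounded complexes is the limit of the derived $p$-completions, so
\[
A\Omega_R \,\widehat\otimes^\L_{\Ainf} \Acris \;\isom\; \Big(\lim_{\Delta} A\Omega_{R^\bullet}\Big)\widehat\otimes^\L_{\Ainf}\Acris \;\isom\; \lim_{\Delta}\Big(A\Omega_{R^\bullet}\widehat\otimes^\L_{\Ainf}\Acris\Big),
\]
which is exactly the sheaf condition on $\qSyn_{\mO_C}^{\textup{proj,op}}$ after invoking that quasiregular semiperfectoid rings form a basis (so it is enough to check descent for covers within this basis, where the boundedness is cleanest — indeed $A\Omega_S$ is concentrated in degree $0$ there by the Main Lemma).

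I expect the main obstacle to be the interchange of the derived $p$-completion (equivalently $(p,\mu)$-completion, as noted in the footnote) with the totalization $\lim_\Delta$. Naively, completion does not commute with infinite limits, so one genuinely needs the uniform cohomological bound on the terms $A\Omega_{R^\bullet}$ of the Čech nerve, and one needs to know this bound survives $\widehat\otimes^\L \Acris$ — this is where the filtered-colimit-of-frees description of $\Acris$, rather than mere flatness, is used. A secondary, more bookkeeping-level obstacle is checking that it suffices to verify the sheaf property on the basis $\qrsP_{\mO_C}^{\textup{proj,op}}$ and that descent there can be tested using the explicit degree-$0$ description from the Main Lemma, so that the totalization becomes a totalization of ordinary (discrete, $p$-completely flat) $\Acris$-algebras, where exactness of $\Acris/p \otimes_{\Ainf/p} -$ makes the comparison transparent. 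Once the interchange is justified, the statement follows formally.
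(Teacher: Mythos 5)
Your skeleton matches the paper's proof in outline (reduce modulo $p$ using derived $p$-completeness, decompose $\Acris/p$ into simple $\Ainf$-module pieces, and commute that decomposition past the \v{C}ech totalization using a uniform cohomological bound, with descent for $A\Omega$ itself as the input), but two of your key assertions are wrong as stated. First, $\Acris/p$ is \emph{not} flat over $\Ainf/p$, and in particular not a filtered colimit of free $\Ainf/p$-modules: since $\xi^p=p!\,\gamma_p(\xi)$ in the PD-envelope, one has $\xi^p=0$ in $\Acris/p$, and concretely $\Acris/p\cong \Ainf[X_1,X_2,\dots]/(p,\xi^p,X_1^p,X_2^p,\dots)$ is a countable direct sum of copies of $\Ainf/(p,\xi^p)$, which has $\xi$-torsion (note $\Ainf/p=\mO_C^\flat$ is a valuation ring, so flat $=$ torsion-free). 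Thus ``tensoring is exact'' fails; likewise $\xi$ is certainly not a unit in $\Acris$ (it lies in the PD-ideal). Fortunately exactness is not what the argument needs: tensoring with the \emph{perfect} complex $\Ainf/(p,\xi^p)$ commutes with all limits, so the sheaf property of $R\mapsto A\Omega_R\otimes^\L\Ainf/(p,\xi^p)$ is inherited formally from that of $A\Omega$; the only genuine issue is commuting the infinite direct sum indexing the monomials in the $X_i$ with the totalization.

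That commutation is where your boundedness argument has a real gap. What one needs is a \emph{uniform lower bound} (coconnectivity) on the values of the functor on the terms of the \v{C}ech nerve: then each cohomology group of the totalization is computed by a finite limit, and direct sums pass through. Your proposed bound --- amplitude ``bounded by the dimension'' of the terms $R^n$ --- is neither available (the terms of a proj-quasisyntomic \v{C}ech nerve are general quasisyntomic, e.g.\ quasiregular semiperfectoid, rings with no meaningful dimension bound) nor the relevant direction: an upper bound on amplitude alone would not make infinite direct sums commute with the limit. The correct input is that $R\mapsto A\Omega_R$ is a \emph{coconnective} sheaf on $\qSyn_{\mO_C}^{\textup{proj,op}}$, which is Construction 9.5 of \cite{THH} (resting on flat descent for cotangent complexes), and this is exactly what the paper invokes. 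Finally, your fallback of verifying descent only for covers inside the basis $\qrsP_{\mO_C}^{\textup{proj}}$ (where $A\Omega_S$ is discrete) is not sufficient as stated: the sheaf condition must be checked for proj-quasisyntomic covers of arbitrary objects of the site, and it is precisely the coconnectivity statement on all of $\qSyn_{\mO_C}^{\textup{proj}}$ that removes the need for such a reduction. With the flatness claim replaced by the direct-sum decomposition into copies of $\Ainf/(p,\xi^p)$ and the dimension bound replaced by the cited coconnectivity, your argument becomes the paper's proof.
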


\begin{proof} 
It suffices to show that $R \mapsto A \Omega_R \otimes_{\Ainf}^\L \Acris/p$ is a sheaf. 
Now observe that $\Acris$ is isomorphic to the $p$-adic completion of $\Ainf[\frac{\xi^{p^k}}{p^{1+p+\cdots + p^{k-1}}}]_{k \ge 1}$, therefore, 
$$\Acris/p \cong \Ainf[X_{1}, X_{2}, ...]/(p, \xi^p, X_1^p, X_2^p, ...)$$ 
is a countable direct sum of copies of $\Ainf/(p, \xi^p)$ as an $\Ainf$-module. Thus it suffices to show that $R \mapsto A \Omega_R \otimes^\L \Ainf/(p, \xi^p)$ is a coconnective sheaf. This follows from Construction 9.5 of \cite{THH}, which shows that $R \mapsto A \Omega_R$ is a coconnective sheaf on $\qSyn_{\mO_C}^{\textup{proj, op}}$ (the key input is that flat descent holds for cotangent complexes, see Theorem 3.1 of \textit{loc.cit}). 
\end{proof}

\subsection{$A \Omega_S$ for quasiregular semiperfectoid rings} 
Locally on quasiregular semiperfectoid rings $S$,  $A \Omega_S$ is a relatively simple object. 
\begin{lemma}
 For $S \in \qrsP_{\mO_C}^{\textup{proj}}$, $A \Omega_S \widehat \otimes^\L \Acris$ is a topologically free $\Acris$-module concentrated in degree $0$. 
\end{lemma}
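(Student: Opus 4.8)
The plan is to reduce the statement to a computation over $\Ainf$ modulo $(p,\xi^p)$, where everything becomes explicit, and then re-complete. First I would recall that by the de Rham comparison for the left Kan extended functor, $A\Omega_S \otimes^\L \Ainf/\xi \cong \L\Omega_{S/\mO_C}$, and that for $S$ quasiregular semiperfectoid the cotangent complex $\L_{(S/p)/(\mO_C/p)}$ is concentrated in degree $0$ and flat (this is essentially the definition of quasiregular semiperfectoid, combined with the fact that $S$ is semiperfect so the Frobenius kills the degree $-1$ part). Hence $\L\Omega_{S/\mO_C}$, being the derived de Rham complex, has its conjugate filtration with graded pieces $\wedge^i \L_{(S/p)/(\mO_C/p)}[-i]$ placed in cohomological degree $i$ but these exterior powers of a flat module over a semiperfect ring vanish for $i\geq 1$ — more precisely one uses that the de Rham cohomology of a quasiregular semiperfectoid ring is discrete (this is the analogue, in the $\Ainf$ setting, of the fact proven in \cite{THH} that $\L\Omega_{S/\mO_C}$ is concentrated in degree $0$). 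So $A\Omega_S \otimes^\L \Ainf/\xi$ is discrete.

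Next I would bootstrap from $\Ainf/\xi$ to the full $(p,\xi)$-complete picture. Since $A\Omega_S$ is derived $(p,\xi)$-complete and $A\Omega_S\otimes^\L\Ainf/\xi$ is discrete, derived Nakayama (applied carefully: $\xi$ is a nonzerodivisor in $\Ainf$, so $A\Omega_S/\xi$ discrete forces $A\Omega_S$ discrete once we also know it is $\xi$-torsion-free, which follows because $H^{-1}(A\Omega_S\otimes^\L\Ainf/\xi)=0$) shows $A\Omega_S$ is a discrete, $(p,\xi)$-complete $\Ainf$-module, flat in the derived sense over $\Ainf$ — hence $(p,\xi)$-completely flat. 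Then base changing to $\Acris$: since $A\Omega_S$ is $(p,\xi)$-completely flat over $\Ainf$, the derived completed tensor product $A\Omega_S\widehat\otimes^\L_{\Ainf}\Acris$ is discrete and $(p,\mu)$-completely flat over $\Acris$. This already gives concentration in degree $0$.

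To upgrade "flat" to "topologically free" I would argue modulo $p$: $\left(A\Omega_S\widehat\otimes^\L_{\Ainf}\Acris\right)/p$ is a flat, hence free, module over $\Acris/p$ (using that $\Acris/p$, or at least the relevant local pieces, are such that flat implies free — in the BMS setting one typically checks this after further reducing mod the maximal ideal, or invokes that a $p$-completely flat module over a $p$-complete ring with the right finiteness is topologically free by Nakayama plus lifting a basis). Concretely, pick a basis of $(A\Omega_S\widehat\otimes\Acris)/(p,\ldots)$ over the residue ring, lift it, and use $p$-completeness together with flatness to conclude the lifted family is a topological basis. Alternatively, one can deduce topological freeness directly from the structure of $A\Omega_S$ as a topologically free $\Ainf$-algebra (the content of the \emph{next} lemma, Lemma 1.18 in the introduction), but since that is what we are proving I would keep the argument self-contained by working mod $p$ and $(p,\xi^p)$ as in the proof of Lemma \ref{lemma:tensor_with_Acris_is_sheaf}.

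The main obstacle I expect is the passage from "$(p,\xi)$-completely flat over $\Ainf$" to "\emph{topologically free} over $\Acris$" — i.e. promoting flatness to the existence of an actual topological basis. Over $\Ainf$ one has to be a little careful because $\Ainf$ is not noetherian and not a PID in the naive sense; the standard fix is to reduce modulo $p$ and $\mu$ (so that one is over $\mO_C^\flat/\mu$, essentially) where flat modules over such rings are free, then lift, then re-complete $(p,\mu)$-adically. I would organize this as: (i) reduce to the mod-$p$ statement as in Lemma \ref{lemma:tensor_with_Acris_is_sheaf}; (ii) observe $\Acris/p$ is a filtered colimit of free $\Ainf/(p,\xi^p)$-modules and the relevant module mod $p$ is flat; (iii) conclude freeness mod $p$ and lift. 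The discreteness and concentration in degree $0$, by contrast, are a fairly direct consequence of the de Rham comparison plus derived Nakayama and should not pose difficulty.
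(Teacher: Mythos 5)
There is a genuine gap, and it sits exactly where you flagged it: the step from ``$(p,\xi)$-completely flat'' to ``topologically free''. Flatness does not imply freeness over the rings that actually occur here. After reducing mod $p$ you are over $\Acris/p$, respectively over $\Ainf/(p,\xi^p)$, which is a quotient of the valuation ring $\mO_C^\flat$ by a single element of positive valuation: a non-noetherian local ring whose maximal ideal is neither finitely generated nor nilpotent (the value group is divisible, so $\fm^n=\fm$). For such rings the standard rescues you invoke are unavailable: ``flat implies free'' holds for finitely generated modules over a local ring, or for arbitrary modules when the maximal ideal is nilpotent, but the modules here are hugely infinitely generated and the ideal is not nilpotent, so ``reduce mod the maximal ideal and lift'' does not get off the ground, and ``with the right finiteness'' has no content. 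The paper never deduces freeness from flatness; it produces an actual basis. This is precisely why it works on the \emph{proj}-quasisyntomic site: for $S\in\qrsP_{\mO_C}^{\textup{proj}}$ the graded pieces $\wedge^i\L_{(S/p)}[-i]$ of the conjugate filtration are projective, hence free, $\mO_C/p$-modules in degree $0$, so $\L\Omega_{(S/p)}$ is free over $\mO_C/p$; one then cites from \cite{THH} that $A\Omega_S$ is discrete on quasiregular semiperfectoids, lifts a basis along $A\Omega_S\twoheadrightarrow A\Omega_S/(p,\xi)\cong\L\Omega_{(S/p)}$ using derived $(p,\xi)$-completeness to get that $A\Omega_S$ is topologically free over $\Ainf$, and finally transports this to $\Acris$ via the explicit decomposition $\Acris/p\cong\big(\Ainf/(p,\xi^p)\big)^{\oplus J}$, which identifies $A\Omega_S\otimes^\L\Acris/p$ with a direct sum of copies of the free module $A\Omega_S/(p,\xi^p)$.

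A second, related error in your first step removes the very ingredient you would need to close this gap: for quasiregular semiperfectoid $S$ the cotangent complex $\L_{(S/p)}$ is concentrated in degree $-1$ with flat $H^{-1}$, not in degree $0$ (semiperfectness kills $\Omega^1_{(S/p)}=H^0(\L_{(S/p)})$, and quasiregularity makes $H^{-1}$ flat). Consequently the graded pieces $\wedge^i\L_{(S/p)}[-i]\cong\Gamma^i\big(\L_{(S/p)}[-1]\big)$ do \emph{not} vanish for $i\ge 1$ --- for $S_0=\mO_C\gr{X^{1/p^\infty}}/(X)$ they are the divided-power pieces --- rather, they all land in degree $0$, which is both the correct reason for discreteness of $\L\Omega_{(S/p)}$ and the source of the explicit basis. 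Your discreteness and degree-$0$ conclusions survive (derived Nakayama mod $\xi$ plus completeness is fine, and you can fall back on \cite{THH}), but as written the proposal proves flatness statements and then asserts freeness, which is the whole point of the lemma.
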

 
\begin{proof} First note that $\L \Omega_{(S/p)}$ is a free $\mO_C/p$-module concentrated in degree $0$ (by considering the graded pieces $\wedge^i \L_{(S/p)} [-i]$ for the conjugate filtration on $\L \Omega_{(S/p)}$).  The $D(\Ainf)$-valued sheaf $R \mapsto A \Omega_R$ on $\qSyn_{\mO_C}^{\textup{proj, op}}$ takes discrete values on quasi-regular semiperfectoid objects, hence the lifting of a basis from $A \Omega_S \otimes^\L \Ainf/(p, \xi) \cong \L \Omega_{(S/p)}$ 
to $A \Omega_S$ becomes a topological basis, 
so $A \Omega_S$ is topologically free. 
To see that  $A \Omega_S \widehat \otimes^\L \Acris$ is topologically free over $\Acris$, we write $\Acris/p \cong \big(\Ainf/(p, \xi^p) \big)^{\oplus J}$ as in the proof of Lemma \ref{lemma:tensor_with_Acris_is_sheaf}, and observe that
$$A\Omega_S \otimes^\L \Acris/p \cong \big(A \Omega_S \otimes^\L \Ainf/(p, \xi^p) \big)^{\oplus J},$$
which is a discretely-valued free $\Acris/p \cong  \big(\Ainf/(p, \xi^p) \big)^{\oplus J}$ module. 
\end{proof}

\begin{lemma} \label{lemma:PD_thickening}  Retain notations from above. The natural projection map $\beta$
\[
\begin{tikzcd} 
A \Omega_S \widehat \otimes^\L \Acris \arrow[r] \arrow[rd, dashed, "\beta"] &  \L \Omega_{S/\mO_C} \arrow[d, two heads] \\
& S/p
\end{tikzcd} 
\] 
 is a PD-thickening. Here the horizontal arrow is induced from the de Rham comparison $A \Omega_S  \otimes^\L \mO_C \cong \L \Omega_{S/\mO_C}$, while the vertical map is the composition of $\L \Omega_{S/\mO_C} \ra \L \Omega_{(S/p)}$ with the augmentation map  $\L \Omega_{(S/p)} \ra \Omega^{\bullet}_{(S/p)} = S/p.$ 
\end{lemma}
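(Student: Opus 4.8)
The plan is to verify the two defining properties of a PD-thickening: that $\beta$ is a surjection with nilpotent (or at least PD-nilpotent) kernel, and that the divided power structure on $\ker(\Acris \to \mO_C/p)$ extends compatibly along $\beta$ to the kernel $I := \ker(\beta)$. First I would observe that $\beta$ is indeed surjective: the horizontal map $A\Omega_S \widehat\otimes^\L \Acris \to \L\Omega_{S/\mO_C}$ is surjective on $H^0$ (base change along $\Acris \to \mO_C$ is right $t$-exact, and $A\Omega_S$ is discrete by the previous lemma, so $\L\Omega_{S/\mO_C}$ is discrete too), and the augmentation $\L\Omega_{(S/p)} \to S/p$ is surjective; composing gives surjectivity of $\beta$. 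Since all three rings are discrete, we are simply working with a surjection of honest commutative rings $A\Omega_S \widehat\otimes \Acris \twoheadrightarrow S/p$.

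Next I would analyze the ideal $I = \ker(\beta)$ by breaking it into two pieces according to the factorization through $\L\Omega_{S/\mO_C}$. Let $J_1 = \ker(A\Omega_S\widehat\otimes\Acris \to \L\Omega_{S/\mO_C})$ and let $J_2 = \ker(\L\Omega_{S/\mO_C} \to S/p)$, so that $I$ is an extension of $J_2$ by $J_1$. For $J_1$: the de Rham comparison identifies $\L\Omega_{S/\mO_C}$ with $(A\Omega_S\widehat\otimes\Acris)\otimes_{\Acris}\mO_C = (A\Omega_S\widehat\otimes\Acris)/\xi$-ish, so $J_1$ is generated (topologically) by the kernel of $\Acris \to \mO_C$, which carries its canonical PD structure; this is where I would invoke that $\Acris$ is the $p$-completed PD-envelope of $\Ainf \twoheadrightarrow \mO_C$, so the PD structure is already present and $A\Omega_S\widehat\otimes\Acris$, being topologically free over $\Acris$, inherits compatible divided powers on $J_1$ by base change (divided powers extend along flat, or here topologically free, base change — one can also see it by writing $A\Omega_S$ with a topological basis and checking the PD axioms coordinatewise). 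For $J_2$: the augmentation ideal of $\L\Omega_{(S/p)} \to S/p$, together with the de Rham differential part $\ker(\L\Omega_{S/\mO_C} \to \L\Omega_{(S/p)})$ which is controlled by $p \cdot \L\Omega_{S/\mO_C}$, needs a PD structure; here I would use that the conjugate filtration on $\L\Omega_{(S/p)}$ has graded pieces $\wedge^i\L_{(S/p)}[-i]$ and that $S$ being quasiregular semiperfectoid makes $\L_{(S/p)}$ a flat $S/p$-module in degree $-1$, so the augmentation ideal is generated by a regular-ish sequence and, more to the point, $p$ already has divided powers in $\Acris$ hence in $A\Omega_S\widehat\otimes\Acris$, and the remaining generators are nilpotent modulo that.

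The cleanest route, which I would actually take, is to compare with crystalline cohomology directly: by construction $\L\Omega_{(S/p)} = \L\Omega_{(S/p)/(\mO_C/p)}$ is the derived de Rham cohomology, and there is a canonical comparison (Bhatt) between derived de Rham cohomology and derived crystalline cohomology after PD-completion; concretely, $\L\Omega_{(S/p)}^{\mathrm{PD}\text{-completed}}$ computes $R\Gamma_{\crys}((S/p)/(\mO_C/p))$, and the map $A\Omega_S\widehat\otimes\Acris \to \L\Omega_{S/\mO_C} \to \L\Omega_{(S/p)}$ factors through a map to this crystalline cohomology, whose $H^0$ is a PD-envelope-type ring. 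Then $\beta$ exhibits $A\Omega_S\widehat\otimes\Acris$ as mapping to $S/p$ compatibly with the PD structure on $\Acris \to \mO_C/p$, and I would check that the divided powers on $\Acris$ push forward: the key point is that $A\Omega_S\widehat\otimes\Acris$ is $p$-torsion-free (topologically free over $\Acris$) and $p$-complete, and its ideal $I$ is generated by $p$ (which has divided powers from $\Acris$) together with elements of a topological basis that map to generators of the augmentation ideal of $S/p$ — such "coordinate" elements can be declared PD-nilpotent compatibly because $\L_{(S/p)}$ is flat, so the relevant symmetric/exterior algebra computations show the divided power envelope is already $A\Omega_S\widehat\otimes\Acris$ itself, or differs from it by something that does not obstruct extending the PD structure.

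The main obstacle I expect is precisely verifying that the divided power structure extends to \emph{all} of $I$, not just to $J_1$ and to the subideal generated by $p$: one must handle the "horizontal" coordinate directions coming from $S$ itself and confirm that declaring them to have divided powers is consistent (no relations are violated), which is where the quasiregular semiperfectoid hypothesis — flatness of $\L_{(S/p)}$ in a single degree, equivalently that $S/p$ is a "PD-smooth-like" quotient — does the real work. I would isolate this as a sublemma, proving it first for the model cases (polynomial-type quasiregular semiperfectoid rings such as $\mO_C\langle X^{1/p^\infty}\rangle/(X)$) by an explicit PD-envelope computation, then deducing the general case by the topologically-free presentation of $A\Omega_S$ and base change.
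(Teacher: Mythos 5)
Your reduction of the problem and your treatment of the ``vertical'' piece $J_1$ (the PD structure on $\ker(\Acris \to \mO_C)$ extends along the topologically free base change $\Acris \to A\Omega_S \widehat\otimes^\L \Acris$) are fine, but the heart of the lemma --- divided powers on the part of the kernel coming from $A\Omega_S \to S$ itself --- is where your proposal has a genuine gap, and the two mechanisms you offer for it do not work as stated. First, PD structures do not pull back along ring maps: knowing that $A\Omega_S \widehat\otimes^\L \Acris \to \L\Omega_{(S/p)}$ factors through (derived) crystalline cohomology, i.e.\ through a ring carrying divided powers, gives no PD structure on $\ker\beta$. To argue that way you would need a surjection from a PD pair onto $(A\Omega_S\widehat\otimes^\L\Acris, \ker\beta)$, which is essentially the assertion that $A\Omega_S\widehat\otimes^\L\Acris$ \emph{is} the $p$-completed PD envelope $\Acris(S/p)$ --- a strictly stronger statement that the paper deliberately avoids proving (see Remark \ref{remark:alternative_construction_using_THH}). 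Second, the claim that $\ker\beta$ is ``generated by $p$ together with topological basis elements mapping onto the augmentation ideal of $S/p$'' is unjustified: the kernel contains $\xi$ and all of $\ker(\theta\colon A\Omega_S \to S)$, whose structure is precisely the difficult point; ``declaring'' divided powers on chosen generators and checking consistency is again the PD-envelope statement in disguise, not a proof.

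What is missing is a concrete substitute for the paper's key step. The paper first observes that, because $S$ is $p$-torsion free, it suffices to show $x^p \in p\cdot\bigl(A\Omega_S\widehat\otimes^\L\Acris\bigr)$ for every $x \in \ker(\theta)$, and then proves this by a Frobenius argument: the image $\overline{x} \in A\Omega_S/(p,\xi) \cong \L\Omega_{(S/p)}$ lies in the augmentation ideal, hence $\overline{x}^{\,p} = 0$; pushing through $\varphi$ and using that $\ker\bigl(A\Omega_S/(p,\xi^p) \to A\Omega_S/(p,\xi)\bigr)$ is generated by $\xi$, one finds $\overline{x} \in (\varphi^{-1}(\xi))$, and an iteration (using $\theta(\varphi^{-1}(\xi)) = u\,p^{1/p}$) yields $x \in (p^{1/p}, \xi)$, whence $x^p \in (p, \xi^p) \subseteq p\cdot A\Omega_S\widehat\otimes^\L\Acris$ since $\xi^p/p \in \Acris$. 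Nothing in your proposal replaces this. Your fallback --- an explicit PD computation for the model case $\mO_C\langle X^{1/p^\infty}\rangle/(X)$ followed by base change --- parallels the remark following Lemma \ref{lemma:PD_thickening}, but the reduction to the model case is itself nontrivial (it requires a surjection of pairs built from an auxiliary perfectoid $\widetilde{S}\langle X_j^{1/p^\infty}\rangle$, not mere ``topologically free base change''), and neither your sketch nor that remark carries out the computation; as written, your proof would still be incomplete at exactly the decisive step.
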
 

\begin{proof} 
We first observe that the statement is equivalent to that $A \Omega_S \widehat \otimes^\L \Acris \ra S$ is a PD-thickening, in other words, for any $x \in \sq I := \ker(A\Omega_S \widehat \otimes^\L \Acris \ra S)$, $x^n \in (n!) \cdot \sq I$. Moreover, since $S$ is $p$-torsion free, it suffices to show that, for each $x \in I: = \ker (\theta: A \Omega_S \ra S)$,  $x^p$ is divisible by $p$ in $A \Omega_S \widehat \otimes^\L \Acris$.\footnote{Note that the $p$-torsion-free condition on the target $S$ is necessary. For example, for a discrete valuation ring $R$ with uniformizer $\pi$ such that $\pi^p = p$, $R \ra R/\pi$ is not a PD-thickening (the requirement for the map $R \ra R/\pi$ to be a PD-thickening is precisely that the ramification index satisfies $e \le p - 1$).} 
Now consider the following commutative diagram 
\[
\begin{tikzcd}[column sep = 1.5em, row sep = 1.5em]
& A \Omega_S  \arrow[r, "\varphi"] \arrow[d] \arrow[ldd, swap, "\theta"] & 
A \Omega_S  \arrow[d]  \\ 
& A \Omega_S /(p, \xi) \arrow[r, "\varphi"] \arrow[d, equal] &  
A \Omega_S  /(p, \xi^p) \arrow[r, "\textup{can}"] & A \Omega_S /(p, \xi) \arrow[d, equal]\\
S \arrow[rd] & \L\Omega_{(S/p)} \arrow[d, "\alpha"] \arrow[rr, "\omega \: \longmapsto \:\omega^p"]  & &  \L \Omega_{(S/p)}  \\
& S/p
\end{tikzcd}
\]
where we denote by $\theta$ for the map $A \Omega_S \ra S$. Let $x$ be an element in $I = \ker (\theta)$, we wish to show that $x^p \in p \cdot A\Omega_S \widehat \otimes^\L \Acris$. Write $\cl x$ for the image of $x$ in $A \Omega_S/(p, \xi) \cong \L \Omega_{(S/p)}$. Now observe that, since $\alpha(\cl x) = 0 \in S/p$, we in fact have $(\cl x)^p = 0 \in \L\Omega_{(S/p)}$, 
hence $\varphi(\cl x) \in A \Omega_S /(p, \xi^p)$ lies in the kernel of the canonical projection denoted by $\textup{can}$ in the diagram. As $\ker (\textup{can})$ is generated by $\xi$, this implies that $\cl x \in (\varphi^{-1} (\xi)) \subset A \Omega_{S}/(p, \xi)$. In other words, we may write 
$$x = \varphi^{-1}(\xi) y + pz' + \xi w $$ 
for some $y, z', w \in A \Omega_S$. By assumption, $\theta(x) = u p^{1/p} \theta(y) + p \theta(z') = 0$ where $u$ is a unit in $S$, hence we know that $\theta(y) \in p^{(p-1)/p} S$. In other words, we may write $y = p^{(p-1)/p} y'+x_1$ with $y' \in A \Omega_S$ and $x_1 \in \ker (\theta)$. Inserting this expression into the previous expression for $x$, we have 
$$x = \varphi^{-1}(\xi) x_1 + p^{1/p} z + \xi w$$
where $z= p^{(p-2)/p} \varphi^{-1}(\xi) y' + p^{(p-1)/p} z'$. Now we may repeat this procedure with $x$ replaced by $x_1$. Note that $\big(\varphi^{-1}(\xi)\big)^p \equiv \xi \mod p$, therefore we conclude that $x \in (p^{1/p}, \xi)$, so $x^p$ is indeed divisible by $p$ in $A \Omega_S \widehat \otimes^\L \Acris$. 
\end{proof} 

\begin{remark} Another way to see the lemma 
is the following. Pick a surjection $\sq S \ra S$ from a perfectoid ring $\sq S$ with kernel $J$. Let $\sq S' = \sq S \gr{X_{j}^{1/p^\infty}}_{j \in J}$ be the perfectoid ring obtained by adjoining all $p$-power roots of $X_j$. 
Then consider the quasiregular semiperfectoid $\mO_C$-algebra $S' :=  \sq S \gr{X_{j}^{1/p^\infty}}_{j \in J}/(X_j)$ and choose a homomorphism $\sq S' \ra \sq S$ with $X_j \mapsto j$ for all $ j\in J$. This induces a surjective map $A \Omega_{S'} \widehat \otimes^\L \Acris \longrightarrow A \Omega_{S} \widehat \otimes^\L \Acris,$ 
and it suffices to prove the claim replacing $S$ with $S' =  \sq S \gr{X_{j}^{1/p^\infty}}_{j \in J}/(X_j)$. 
To this end, we let $S_0 =  \mO_C \gr{X_{j}^{1/p^\infty}}_{j \in J}/(X_j)$ and observe that the natural map 
$ A \Omega_{S_0} \widehat \otimes^\L_{\Ainf} \Ainf(\sq S) \ra A\Omega_{S'}$
is an isomorphism. 
This further reduces to proving that the kernel of $A \Omega_{S_0} \widehat \otimes^\L \Acris \ra S_0$ carries a PD-structure, where $S_0 = \mO_C \gr{X_{j}^{1/p^\infty}}_{j \in J}/(X_j)$, 
which should then follow from an explicit computation. 
\end{remark} 

\section{The crystalline nature of the $\Acris$ base change of $\Ainf$-cohomology}

As in the introduction, let $\fX$ be a smooth formal scheme over $\mO_C$.  In this section we construct a functorial map from $R\Gamma_{\textup{crys}} (\fX_{\mO_C/p}/\Acris) \ra R \Gamma_{\Ainf}(\fX) \widehat \otimes^\L \Acris$ which reduces to the de Rham comparison over $\mO_C/p$. It is convenient to view the $\Ainf$-cohomology as a $D(\Ainf)$-valued sheaf on $\fX_{\ett}$, given by 
$$\fU \longmapsto R \Gamma_{\Ainf} (\fU, A \Omega_{\fX}),$$
where $A\Omega_{\fX} = L \eta_{\mu} R \nu_* \Ainfx$. 
The functor  
$\fU \mapsto R \Gamma_{\Ainf} (\fU) \widehat \otimes^\L \Acris$
 forms a sheaf on $\fX_{\ett}$ by the same argument for Lemma \ref{lemma:tensor_with_Acris_is_sheaf} (here the completion is $p$-adic). Furthermore, as affine opens $\fU = \spf R$ in $\fX_{\ett, \textup{aff}}$ form a basis for $\fX_{\ett}$,  it suffices to construct a functorial map 
$$ R\Gamma_{\textup{crys}} ((R/p)/\Acris) \longrightarrow A\Omega_R \widehat \otimes^\L \Acris $$ 
for all $\spf R = \fU \subset \fX_{\ett, \textup{aff}}$. 

\subsection{The local map $h_{\textup{crys}}$}
\begin{construction} \label{con:functorial_map_h_crys}
Let $R$ be the $p$-adic completion of a smooth $\mO_C$-algebra. 
For each $\mO_C$-algebra map $R \ra S$ where $S\in \qrsP_{\mO_C}^{\textup{proj}}$, we have the following commutative diagram 
\[ 
\begin{tikzcd}[column sep = 1.5em]
\Acris \arrow[d] \arrow[rr]  & & A\Omega_S \widehat \otimes^\L \Acris \arrow[d, "\beta"] \\
\mO_C/p \arrow[r] & R/p \arrow[r] & S/p
\end{tikzcd}.
\]
The map $\beta$ is $A \Omega_S \widehat \otimes^\L\Acris \ra A\Omega_S/\xi \xrightarrow{\gamma_{\textup{dR}}} \L \Omega_{(S/p)} \ra S/p$ as described in Lemma \ref{lemma:PD_thickening}. By Lemma \ref{lemma:PD_thickening}, the diagram (often denoted by $ A\Omega_S \widehat \otimes^\L \Acris $) is an object in the big crystalline site of $R/p$ over $(\Acris, \mO_C/p, \gamma)$. Thus by restriction in the crystalline site we obtain a functorial map 
$$h_S: R\Gamma_{\textup{crys}} ((R/p)/\Acris) \longrightarrow  A\Omega_S \widehat \otimes^\L \Acris.$$
\end{construction} 

We first claim that 
\begin{lemma} \label{lemma:compatible_with_dR_bc} $h_S$ is compatible with the de Rham comparison along the base change $\Acris \ra \mO_C/p$. More precisely, the following diagram commutes
\[
\begin{tikzcd} 
R \Gamma_{\textup{crys}} ((R/p)/\Acris) \arrow[r, "h_{\textup{crys},S}"] \arrow[d] & A \Omega_S \widehat \otimes^\L \Acris  \arrow[d] \\
R\Gamma_{\textup{dR}} (R/p) \cong \Omega^{\bullet}_{(R/p)}  \arrow[r] &  \L \Omega_{(S/p)} 
\end{tikzcd}
\]
where the bottom map is the functorial map $\Omega^{\bullet}_{(R/p)} \cong \L \Omega_{(R/p)} \ra \L \Omega_{(S/p)}$, and the right vertical map is the composition of base change and the de Rham comparison $\gamma_{\textup{dR}}:  A \Omega_{S}/(p, \xi) \isom \L \Omega_{(S/p)}$.  
\end{lemma}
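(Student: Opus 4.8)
The plan is to recognize the right-hand vertical map as the base change, along the PD-morphism $\Acris \to \mO_C/p$, of the crystalline object $(A\Omega_S \widehat{\otimes}^\L \Acris, \beta)$ used to construct $h_S$, and then to deduce commutativity from the functoriality of ``restriction to an object'' under this base change together with the base-change theorem for crystalline cohomology. To unwind the right vertical map: the projection $A\Omega_S\widehat{\otimes}^\L\Acris \to A\Omega_S/(p,\xi)$ is exactly the derived base change $-\otimes^\L_{\Acris}\mO_C/p$, since $\mO_C/p = \Ainf/(\xi,p)$ and $\Ainf \to \Acris \to \mO_C/p$ is the projection; and, using the de Rham comparison $A\Omega_S \otimes^\L_\theta \mO_C \cong \L\Omega_{S/\mO_C}$ together with base change for derived de Rham cohomology and the $p$-torsion-freeness of $S \in \qrsP^{\textup{proj}}_{\mO_C}$, the map $\gamma_{\textup{dR}}$ identifies $A\Omega_S/(p,\xi)$ with $\L\Omega_{(S/p)}$. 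So base-changing the PD-thickening $\beta$ along $\Acris \to \mO_C/p$ produces a PD-thickening $\cl\beta \colon \L\Omega_{(S/p)} \twoheadrightarrow S/p$, an object of $\textup{CRIS}((R/p)/(\mO_C/p))$, and the right vertical composite is precisely $A\Omega_S\widehat{\otimes}^\L\Acris \to (A\Omega_S\widehat{\otimes}^\L\Acris)\otimes^\L_{\Acris}\mO_C/p = \L\Omega_{(S/p)}$.

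Next I would match the left vertical map. Since $\Acris \to \mO_C/p$ is a PD-morphism (with $\ker(\Acris \to \mO_C/p) \ni p$) and $R/p$ is smooth, hence flat, over $\mO_C/p = \Acris/\ker$, the base-change theorem for crystalline cohomology gives a canonical isomorphism $R\Gamma_{\textup{crys}}((R/p)/\Acris)\otimes^\L_{\Acris}\mO_C/p \isom R\Gamma_{\textup{crys}}((R/p)/(\mO_C/p)) \cong R\Gamma_{\textup{dR}}(R/p) \cong \Omega^{\bullet}_{(R/p)}$; precomposed with $-\otimes^\L_{\Acris}\mO_C/p$ this is the left vertical map of the lemma. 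As $-\otimes^\L_{\Acris}\mO_C/p$ is a functor, $h_S$ base-changed to $\mO_C/p$ equals the base change of the restriction map (which lands in $\L\Omega_{(S/p)}$), and the restriction construction is compatible with the crystalline base change, both being governed by the functor of topoi $\textup{CRIS}((R/p)/(\mO_C/p)) \to \textup{CRIS}((R/p)/\Acris)$ that induces it. Thus the square commutes provided the restriction of $R\Gamma_{\textup{crys}}((R/p)/(\mO_C/p)) \cong \Omega^{\bullet}_{(R/p)}$ to the object $(\L\Omega_{(S/p)},\cl\beta)$ coincides with the functoriality map $\Omega^{\bullet}_{(R/p)} = \L\Omega_{(R/p)} \to \L\Omega_{(S/p)}$.

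To prove this last claim I would factor the restriction to $(\L\Omega_{(S/p)},\cl\beta)$ through the pullback $R\Gamma_{\textup{crys}}((R/p)/(\mO_C/p)) \to R\Gamma_{\textup{crys}}((S/p)/(\mO_C/p))$ along $\spec(S/p) \to \spec(R/p)$, noting that $(\L\Omega_{(S/p)},\cl\beta)$ is pulled back from $(S/p, \L\Omega_{(S/p)}) \in \textup{CRIS}((S/p)/(\mO_C/p))$. Under the comparison maps $\L\Omega_{(-)} \to R\Gamma_{\textup{crys}}((-)/(\mO_C/p))$ --- the crystalline Poincar\'e lemma for the smooth $R/p$, and Bhatt's comparison of derived de Rham and crystalline cohomology for the $\F_p$-algebra $S/p$ --- the pullback becomes the natural functoriality map $\L\Omega_{(R/p)} \to \L\Omega_{(S/p)}$ of derived de Rham cohomology, while the restriction to $(S/p,\L\Omega_{(S/p)})$ becomes the identity, since the comparison for $S/p$ is realized precisely through this object (the PD-structure on $\L\Omega_{(S/p)}$ being the one entering $\gamma_{\textup{dR}}$). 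As a sanity check: since $\L\Omega_{(S/p)}$ is discrete and $R\Gamma_{\textup{crys}}((R/p)/\Acris)$ is coconnective, it in fact suffices to compare the two composites after applying $H^0$.

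The main obstacle is the compatibility invoked in the second paragraph: that the \emph{formal} ``restriction to an object'' (a projection out of the limit over the crystalline site that computes $R\Gamma_{\textup{crys}}$) commutes with the \emph{non-formal} crystalline base-change isomorphism. This is delicate because $\ker(\Acris \to \mO_C/p)$ is only locally nilpotent, so $R/p$ admits no single smooth $\Acris$-lift and $R\Gamma_{\textup{crys}}((R/p)/\Acris)$ must be computed via a \v{C}ech--Alexander (cosimplicial PD-envelope) complex. The cleanest route is to model both $h_S$ and the base-change map on the \v{C}ech--Alexander complex of a polynomial $\Acris$-algebra surjecting onto $R/p$, to choose compatible ring-theoretic lifts of $R/p \to S/p$ into the target PD-thickenings, and to keep track of the derived $(p,\xi)$-completions throughout; the remaining steps are then bookkeeping about $\gamma_{\textup{dR}}$ and the construction of the comparison map $\L\Omega \to R\Gamma_{\textup{crys}}$.
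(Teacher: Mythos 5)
Your first two paragraphs reproduce the paper's opening move: by functoriality of the crystalline site in the PD base, the square commutes once one shows that the restriction map $\cl h_{\crys,S}\colon R\Gamma_{\crys}\big((R/p)/(\mO_C/p)\big)\cong \Omega^{\bullet}_{(R/p)}\to \L\Omega_{(S/p)}$, i.e.\ restriction to the object $(\L\Omega_{(S/p)},\cl\beta)$, agrees with the canonical map $\L\Omega_{(R/p)}\to\L\Omega_{(S/p)}$ (the crystalline base-change \emph{theorem} you invoke is not needed, only the functoriality map, and the ``main obstacle'' you flag at the end --- commuting restriction with change of PD base --- is the formal part, dispatched in the paper by the phrase ``by functoriality of the crystalline site''). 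The genuine gap is in your third paragraph: the assertion that, under Bhatt's derived de Rham/crystalline comparison for $S/p$, the restriction to $(S/p,\L\Omega_{(S/p)})$ ``becomes the identity, since the comparison is realized precisely through this object.'' This cannot simply be quoted. It requires (i) identifying the PD structure on $\ker(\L\Omega_{(S/p)}\to S/p)$ obtained by reducing the one of Lemma \ref{lemma:PD_thickening} through $\gamma_{\textup{dR}}$ with the canonical PD structure on derived de Rham cohomology for which the comparison is stated, and (ii) an actual verification that the comparison map composed with the \v{C}ech--Alexander restriction to that object is the identity (together with the identification of underived crystalline cohomology of the qrsp ring $S/p$ over $(\mO_C/p,0)$ with $\L\Omega_{(S/p)}$ in the first place). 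These points are of essentially the same nature and difficulty as the lemma itself; they amount to the conjugate-filtration analysis of Proposition 8.11--8.12 of \cite{THH}, which the paper's construction is explicitly designed to avoid (see Remark \ref{remark:alternative_construction_using_THH}).

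The paper finishes differently and more cheaply: after the same reduction it passes to the case where $R/p=\Sigma$ is a polynomial algebra, then chooses a perfectoid surjection $\sq S\twoheadrightarrow S$ and a lift $\Sigma\to\sq S/p$. Since $\L\Omega_{(\sq S/p)}=\sq S/p$ is a thickening with zero ideal, the map $\L\Omega_{(\sq S/p)}\to\L\Omega_{(S/p)}$ is automatically a PD morphism, so both $\cl h_{\crys,S}$ and the canonical map factor compatibly through the perfectoid case; there no PD or comparison subtleties remain, and the equality of the two maps is checked by the explicit cosimplicial computation with $\Sigma(\bullet)$ as in \cite{BdJ}. So either supply full proofs of (i) and (ii) above (at which point you are essentially redoing \cite{THH}, Prop.\ 8.11--8.12), or adopt the perfectoid-cover reduction, which is what makes the paper's argument short.
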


\begin{proof} 
By functoriality of the crystalline site, we may replace the top horizontal map by $\cl h_{\textup{crys},S}: R \Gamma_{\textup{crys}}\big((R/p)/(\mO_C/p)\big) \ra \L \Omega_{(S/p)}$. As the crystalline cohomology of $R/p$ over the trivial PD-ring $(\mO_C/p, 0)$ computes the de Rham cohomology, the problem is now the following: we have two maps $\cl h_{\textup{crys}}$ and $\cl h_{\textup{can}}$ from $R\Gamma_{\textup{dR}}(R/p)$ to $\L \Omega_{(S/p)}$, one constructed by viewing $\L\Omega_{(S/p)}$ as an object in the crystalline site, the other induced by functoriality of derived de Rham cohomology, and we need to show that the two maps agree (up to homotopy). 
For this we first reduce to the case where $R/p$ is a polynomial ring over $\mO_C/p$, by considering a surjective map $\Sigma \cong \mO_C/p [X_i] \twoheadrightarrow R/p$ (and view $\L \Omega_{(S/p)}$ as an object in the crystalline site over $\Sigma$ via the map $\Sigma \ra R/p$).  

In the rest of the proof we assume that $R/p = \Sigma$ is a polynomial ring over $\mO_C/p$. Now let $\sq S \twoheadrightarrow S$ be a perfectoid $\mO_C$-algebra which surjects onto $S$ (as $S$ is quasiregular semiperfectoid). Choose a homomorphism $\Sigma \ra \sq S/p$ such that its composition with the surjection $\sq S/p \twoheadrightarrow S/p$ gives the map $\Sigma \ra S/p$. By functoriality of derived de Rham cohomology we obtain a map $\L \Omega_{(\sq S/p)} \ra \L \Omega_{(S/p)}$, which is a morphism in the crystalline site over $\Sigma$. 
Thus the composition $$R \Gamma_{\textup{crys}}\big(\Sigma/(\mO_C/p)\big) \xrightarrow{\cl h_{\textup{crys}, \sq S}} \L \Omega_{(\sq S/p)} \longrightarrow \L \Omega_{(S/p)}$$
agrees with $\cl h_{\textup{crys}, S}$,
and we are further reduced to the case where $S$ is perfectoid, which we now assume. In this case $\L \Omega_{(S/p)} = S/p$, and by construction the map $\cl h_{\textup{crys}}$ is given by $\Sigma (\bullet) \longrightarrow S/p$ of cosimplicial algebras, where $\Sigma (n)$ is the $(n+1)$-folded product of the polynomial ring $\Sigma = R/p$ in the crystalline site (equivalently, the PD-envelop of $\Sigma^{\otimes(n+1)} \twoheadrightarrow \Sigma$),  and $S/p$ denotes the constant cosimplicial algebra. 
By the proof of the crystalline and de Rham  comparison in \cite{BdJ} (or similarly, by the proof of Proposition \ref{prop:compare_C_inf_coh_with_dR}), we see that the map $\cl h_{\textup{crys}}$ is given by 
$$\Sigma (\bullet) \cong \Omega^\bullet_{\Sigma/(\mO_C/p)} \longrightarrow S/p,$$
where the second map is $\Sigma \ra S/p$ in degree $0$, and $0$ in other degrees. This agrees with the description of $\cl h_{\textup{can}}$.
\end{proof}

\begin{remark} 
Here is another way to proceed once we reduce to the case of polynomial algebra. It suffices to compare $\cl h_{\textup{crys}}$ and $\cl h_{\textup{can}}$ on the graded pieces for the conjugate filtration. On the $\text{zero}^{th}$-graded piece the two maps agree. 
It suffices to check that they agree on the first graded piece (up to homotopy).  
To this end, observe that the map $\Omega^1_{(R/p)}[-1] \ra \L_{(S/p)}[-1]$ coming from $\cl h_{\textup{can}}$ is null-homotopic. For the map induced from $\cl h_{\textup{crys}}$, one can write down an explicit homotopy $H: \tau^{\le1} \Omega^{\bullet}_{(R/p)}/\ker(d^0) \ra \L \Omega_{(S/p)}/\L \tau^{\le0} \Omega_{(S/p)} [-1]$ between $\cl h_{\textup{crys}}$ and $0$, using the fact that $R/p$ is a polynomial algebra over $\mO_C/p$. 
\end{remark} 

\subsection{The global map $h_{\textup{crys}}$}
\begin{construction} \label{con:functorial_map_h_crys_2}
Since the functor $R \mapsto A \Omega_R \widehat \otimes^\L \Acris$ forms a sheaf on the quasisyntomic site (Lemma \ref{lemma:tensor_with_Acris_is_sheaf}), by taking the homotopy limit over maps  $R \ra S$ in the quasisyntomic site with $S \in  \qrsP_{\mO_C}^{\textup{proj}}$, we arrive at the desired homomorphism
\[
\begin{tikzcd}[
  column sep=1.5em,
  ar symbol/.style = {draw=none,"\textstyle#1" description,sloped},
  isomorphic/.style = {ar symbol={\cong}},
  ]
R\Gamma_{\textup{crys}} ((R/p)/\Acris) \arrow[rr, "h_{\textup{crys}, R}"]  
&&    \textup{R}\!\lim A\Omega_S \widehat \otimes^\L \Acris   \ar[r,isomorphic] 
& A\Omega_R \widehat \otimes^\L \Acris. 
\end{tikzcd}
\] 
As discussed in the beginning of the section, we then take the homotopy limit over formal affine opens $\spf R$ in the $\etale$ site  $\fX_{\ett}$, which allows us to write $R \Gamma_{\Ainf}(\fX) \widehat \otimes^\L \Acris \cong \textup{R}\!\lim A\Omega_R \widehat \otimes^\L \Acris$. Consequently, we obtain the construction of $h_{\textup{crys}}$ as the limit of $h_{\textup{crys}, R}$.  

\begin{remark} \label{remark:alternative_construction_using_THH}
An equivalent construction of the map $h_S$ can be given as follows. As in \cite{THH}, let $\A_{\textup{cris}}(S/p)$ be the $p$-completed PD-envelop of $\Ainf(S) \ra S/p$, and let $\L W\Omega_{S/p}$ be the derived de Rham--Witt complex of $S/p$ over $\F_p$, which is defined via left Kan extension from smooth $\F_p$-algebras. In particular $\L W \Omega_{S/p}$ is equivalent to the derived crystalline cohomology $\L R\Gamma_{\textup{crys}} (S/p)$ (over the PD-ring $(\Z_p, p)$). By Proposition 8.12 of \cite{THH}, there is a canonical isomorphism $\A_{\textup{cris}}(S/p) \isom \L R\Gamma_{\textup{crys}} (S/p)$. Thus there is a natural map 
$ \psi_S:\A_{\textup{cris}}(S/p) \longrightarrow A \Omega_S \widehat \otimes^\L \Acris$ by Lemma \ref{lemma:PD_thickening}. 
This in turn induces a map 
$$ R\Gamma_{\textup{crys}} ((R/p)/\Acris)  \longrightarrow   \L R\Gamma_{\textup{crys}} (S/p) \isom \A_{\textup{cris}}(S/p)  \longrightarrow  A \Omega_S \widehat \otimes^\L \Acris. $$
It might be possible to show that $\psi_S$ (hence $h_{\textup{crys}}$) is an isomorphism, which we do not pursue as the existence of $h_{\crys}$ already suffices to deduce the main results of \cite{BMS} (essentially all except for $\Acris$-specialization). Moreover, we prefer Construction \ref{con:functorial_map_h_crys_2} for the following reasons. First, this avoids using Proposition 8.12 in  \cite{THH}.\footnote{which is elementary enough but still involves analyzing the conjugate filtration on a certain PD envelop, see Proposition 8.11 thereof.} More importantly, Construction \ref{con:functorial_map_h_crys_2} (via the crystalline site) compares well with the $\Bdrp$-cohomology by functoriality. Finally, our construction generalizes to the logarithmic setting, while $\psi_S$ does not take log structures into account. 
\end{remark} 
\end{construction}

We summarize this discussion in the following 

\begin{theorem} \label{thm:construction_of_h_cris}
Let $\fX$ be a smooth formal scheme over $\mO_C$. There exists an $\Acris$-linear map 
$$h_{\textup{crys}} = h_{\textup{crys}, \fX}: R \Gamma_{\textup{crys}} (\fX_{\mO_C/p}/\Acris) \longrightarrow R\Gamma_{\Ainf} (\fX) \widehat \otimes^\L_{\Ainf} \Acris $$ 
which is $\varphi$-equivariant and functorial on $\fX$. Moreover, it is compatible with the de Rham comparison. In other words, the following diagram commutes:
\[
\begin{tikzcd}
R\Gamma_{\textup{crys}} ((R/p)/\Acris) \arrow[r] \arrow[d] &  R\Gamma_{\Ainf} (\fX) \widehat \otimes^\L \Acris \arrow[d] \\
 R\Gamma_{\textup{dR}} (\fX_{\mO_C/p})  \arrow[r, " \gamma_{\textup{dR}}^{-1}"] & R\Gamma_{\Ainf}(\fX) \otimes^\L_{\Ainf, \theta} \mO_C/p
\end{tikzcd}
\] 
\end{theorem}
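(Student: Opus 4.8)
The proof is an assembly of the preceding constructions and lemmas, together with one additional verification (the $\varphi$-equivariance). Fix a formal affine open $\spf R = \fU \subset \fX_{\ett}$. For every $\mO_C$-algebra map $R \to S$ with $S \in \qrsP_{\mO_C}^{\textup{proj}}$, Construction~\ref{con:functorial_map_h_crys} produces a map $h_S\colon R\Gamma_{\textup{crys}}((R/p)/\Acris) \to A\Omega_S \widehat\otimes^\L \Acris$, functorial in $S$, by restricting the structure sheaf of $\textup{CRIS}(R/p)_{\Acris}$ to the object $A\Omega_S \widehat\otimes^\L \Acris \twoheadrightarrow S/p$ furnished by Lemma~\ref{lemma:PD_thickening}. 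Passing to the homotopy limit over the category of such $S$, and using Lemma~\ref{lemma:tensor_with_Acris_is_sheaf} together with the fact that quasiregular semiperfectoid rings form a basis of $\qSyn_{\mO_C}^{\textup{proj,op}}$ (so that $\textup{R}\!\lim A\Omega_S \widehat\otimes^\L \Acris \cong A\Omega_R \widehat\otimes^\L \Acris$), yields $h_{\textup{crys},R}$. A further homotopy limit over $\spf R \in \fX_{\ett, \textup{aff}}$ --- legitimate because $R \mapsto A\Omega_R \widehat\otimes^\L \Acris$ is a sheaf on $\fX_{\ett}$ and crystalline cohomology satisfies \'etale descent --- produces $h_{\textup{crys}} = h_{\textup{crys},\fX}$. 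Functoriality in $\fX$ is immediate, since a map $\fX \to \fX'$ restricts on affine opens to maps $R' \to R$ which extend to compatible maps of quasiregular semiperfectoid covers, and each of crystalline pullback, $A\Omega_{(-)}$, and $-\widehat\otimes^\L\Acris$ is functorial.

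Compatibility with the de Rham comparison is obtained by carrying Lemma~\ref{lemma:compatible_with_dR_bc} through the same two homotopy limits: over $S$ that lemma identifies the composite of $h_S$ with $A\Omega_S\widehat\otimes^\L\Acris \to \L\Omega_{(S/p)}$ with the canonical functorial map out of $R\Gamma_{\textup{dR}}(R/p) \cong \Omega^{\bullet}_{(R/p)}$; taking limits over $S$ and then over $R$, and invoking the de Rham comparison $\gamma_{\textup{dR}}$ together with the identification $(R\Gamma_{\Ainf}(\fX)\widehat\otimes^\L\Acris) \otimes^\L_{\Acris}\mO_C/p \cong R\Gamma_{\Ainf}(\fX)\otimes^\L_{\Ainf,\theta}\mO_C/p$, gives the asserted commuting square.

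It remains to check $\varphi$-equivariance. The source $R\Gamma_{\textup{crys}}((\fX_{\mO_C/p})/\Acris)$ carries the crystalline Frobenius induced by the pair $(\varphi_{\Acris}, F)$, with $F$ the absolute Frobenius of $\fX_{\mO_C/p}$ (here one uses that crystalline cohomology over $\Acris$ depends only on $\fX_{\mO_C/p}$), while the target carries $\varphi = \varphi_{A\Omega}\otimes\varphi_{\Acris}$. Reducing once more to $h_S$, I would observe that the Frobenius on $A\Omega_S\widehat\otimes^\L\Acris$ reduces modulo $p$ to the absolute Frobenius --- both $\varphi_{A\Omega_S}$, coming from the Witt-vector Frobenius on $\Ainfx$, and $\varphi_{\Acris}$ are the $p$-th power map modulo $p$ --- whereas the defining ideal of the crystalline-site object $A\Omega_S\widehat\otimes^\L\Acris \twoheadrightarrow S/p$ contains $p$. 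Hence $(A\Omega_S\widehat\otimes^\L\Acris \twoheadrightarrow S/p,\ \varphi)$ is a Frobenius-equivariant object of $\textup{CRIS}(R/p)_{\Acris}$ over $(\Acris,\varphi_{\Acris})$, and restricting the structure sheaf with its crystalline Frobenius to it realizes $h_S$ as a $\varphi$-equivariant map; this passes to the homotopy limits. I expect this last paragraph to be the only genuine point --- everything else being bookkeeping of Constructions~\ref{con:functorial_map_h_crys} and~\ref{con:functorial_map_h_crys_2} and Lemmas~\ref{lemma:tensor_with_Acris_is_sheaf} and~\ref{lemma:compatible_with_dR_bc} --- and within it the one thing to be careful about is that the Frobenius really descends modulo the defining ideal, which is exactly where one uses that this ideal contains $p$ and the two Frobenii are $p$-th power maps mod $p$.
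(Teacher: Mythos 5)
Your proposal is correct and follows essentially the same route as the paper: the theorem is just the assembly of Construction~\ref{con:functorial_map_h_crys}, Lemma~\ref{lemma:PD_thickening}, Lemma~\ref{lemma:compatible_with_dR_bc}, quasisyntomic descent via Lemma~\ref{lemma:tensor_with_Acris_is_sheaf}, and the two homotopy limits of Construction~\ref{con:functorial_map_h_crys_2}. Your final paragraph on $\varphi$-equivariance (Frobenius-compatibility of the crystalline-site object $A\Omega_S \widehat\otimes^\L \Acris \twoheadrightarrow S/p$, using that $\varphi$ is a Frobenius lift mod $p$) is in fact more explicit than the paper, which leaves this point implicit in the construction.
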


\subsection{The crystalline specialization of $\Ainf$-cohomology} \label{ss:crystalline_specialization}

As a direct corollary of the construction above (Theorem \ref{thm:construction_of_h_cris}), we obtain the  crystalline comparison of $\Ainf$-cohomology. 

\begin{corollary} \label{cor:crystalline_comparison} Let $\fX$ be a smooth formal scheme over $\mO_C$.  There is a canonical $\varphi$-compatible quasi-isomorphism 
$$h_{W}:  R\Gamma_{\textup{crys}} (\fX_{k}/W(k)) \isom R\Gamma_{\Ainf} (\fX) \widehat \otimes^\L_{\Ainf, \vartheta} W(k)$$
relating $\Ainf$-cohomology of $\fX$ to the crystalline cohomology of $\fX_k$. 
\end{corollary}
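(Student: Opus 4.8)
The plan is to deduce Corollary~\ref{cor:crystalline_comparison} from Theorem~\ref{thm:construction_of_h_cris} by base change along the surjection $\Acris \twoheadrightarrow W(k)$ together with derived Nakayama. First I would recall the map of PD-thickenings $(\Acris, \mO_C/p) \to (W(k), k)$ induced from $\vartheta \colon \Ainf \twoheadrightarrow W(k)$ (the latter being the unique $p$-complete PD-structure, since $W(k)$ is $p$-torsion free). Base-changing the crystalline site along this map, and using the base-change property of crystalline cohomology for the smooth formal scheme $\fX$, gives a natural identification
\[
R\Gamma_{\textup{crys}}(\fX_{\mO_C/p}/\Acris)\,\widehat\otimes^{\L}_{\Acris}W(k)\;\cong\;R\Gamma_{\textup{crys}}(\fX_k/W(k)),
\]
where one also uses that $\fX_{\mO_C/p}$ and $\fX_k$ have the same underlying topological space and that crystalline cohomology only depends on the mod-$p$ reduction. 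On the other side, $R\Gamma_{\Ainf}(\fX)\,\widehat\otimes^{\L}_{\Ainf}\Acris\,\widehat\otimes^{\L}_{\Acris}W(k) \cong R\Gamma_{\Ainf}(\fX)\,\widehat\otimes^{\L}_{\Ainf,\vartheta}W(k)$, since $\Acris\,\widehat\otimes^{\L}_{\Ainf}W(k)\cong W(k)$ (the PD-envelope is trivial here because the kernel of $\vartheta$ is already generated by a regular element, namely after inverting nothing the relevant ideal admits divided powers in $W(k)$). Thus $h_{\textup{crys}}\,\widehat\otimes^{\L}W(k)$ yields the candidate map $h_W$.

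Next I would check $h_W$ is a quasi-isomorphism. The key point is that both sides are derived $p$-complete perfect complexes over $W(k)$ (perfectness of $R\Gamma_{\Ainf}(\fX)$ over $\Ainf$ was noted in the introduction via the de Rham comparison; perfectness of crystalline cohomology of a smooth proper $\fX_k$ is classical, and in the affine-local setting one works with the sheafified statement). By derived Nakayama it suffices to verify that $h_W$ becomes an isomorphism after further base change along $W(k)\twoheadrightarrow k$, i.e. modulo $p$. Reducing mod $p$, the left side is $R\Gamma_{\textup{crys}}(\fX_k/W(k))\otimes^{\L}_{W(k)}k \cong R\Gamma_{\textup{dR}}(\fX_k/k)$, and the right side is $R\Gamma_{\Ainf}(\fX)\otimes^{\L}_{\Ainf}k$; but by the de Rham comparison $\gamma_{\textup{dR}}$ over $\mO_C$ and its further reduction, the latter is also identified with $R\Gamma_{\textup{dR}}(\fX_k/k)$ (using $\mO_C/p \to k$ and base change for derived de Rham cohomology, since $\fX$ is smooth). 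The content is then that under these two identifications the mod-$p$ reduction of $h_W$ is the identity — and this is exactly what the compatibility of $h_{\textup{crys}}$ with the de Rham comparison in Theorem~\ref{thm:construction_of_h_cris} provides, after one more base change along $\mO_C/p \to k$.

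Finally, $\varphi$-equivariance of $h_W$ is inherited from the $\varphi$-equivariance of $h_{\textup{crys}}$ in Theorem~\ref{thm:construction_of_h_cris}, since the Frobenius on $R\Gamma_{\textup{crys}}(\fX_k/W(k))$ is the one transported from the crystalline Frobenius on $R\Gamma_{\textup{crys}}(\fX_{\mO_C/p}/\Acris)$ under base change (compatibly with the Frobenius on $\Acris$ and $\vartheta$-semilinearity), and the Frobenius on the $\Ainf$-side is the restriction of $\varphi$ on $A\Omega_{\fX}$. Functoriality in $\fX$ and canonicity are likewise immediate from Theorem~\ref{thm:construction_of_h_cris}. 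The main obstacle I anticipate is not any one estimate but the careful bookkeeping in the second paragraph: namely tracking that the two a priori different identifications of $R\Gamma_{\Ainf}(\fX)\otimes^{\L}_{\Ainf}k$ with $R\Gamma_{\textup{dR}}(\fX_k/k)$ — one through the crystalline comparison being built, the other through the de Rham comparison $\gamma_{\textup{dR}}$ — genuinely agree, which is precisely where the compatibility statement in Theorem~\ref{thm:construction_of_h_cris} must be invoked, rather than assumed. A secondary subtlety is justifying the base-change identity $R\Gamma_{\textup{crys}}(\fX_{\mO_C/p}/\Acris)\,\widehat\otimes^{\L}_{\Acris}W(k)\cong R\Gamma_{\textup{crys}}(\fX_k/W(k))$ at the level of complexes (not just cohomology), for which one uses the crystalline base-change theorem locally on $\fX_{\ett}$ and then sheafifies.
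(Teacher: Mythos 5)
Your proposal is correct and follows essentially the same route as the paper: base change $h_{\crys}$ along $\vartheta\colon \Acris \to W(k)$ (identifying the source with $R\Gamma_{\crys}(\fX_k/W(k))$ by crystalline base change), then apply derived Nakayama after observing that the reduction of $h_W$ mod $p$ is $\gamma_{\textup{dR}}^{-1}\otimes^\L k$, which is exactly the compatibility supplied by Theorem \ref{thm:construction_of_h_cris}. One cosmetic remark: the identification $R\Gamma_{\Ainf}(\fX)\widehat\otimes^\L_{\Ainf}\Acris\widehat\otimes^\L_{\Acris}W(k)\cong R\Gamma_{\Ainf}(\fX)\widehat\otimes^\L_{\Ainf,\vartheta}W(k)$ needs only that the PD-map $\Acris\to W(k)$ extends $\vartheta$ (associativity of completed tensor products), not the stronger parenthetical claim that $\Acris\widehat\otimes^\L_{\Ainf}W(k)\cong W(k)$.
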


\begin{proof} 
We base change $h_{\crys}$ along $\vartheta: \Acris \ra W(k)$ to obtain the desired morphism. 
Now we further base change from $W(k) \ra k$ and consider the following obviously commutative diagram 
\[
\begin{tikzcd}[row sep = 1.2em, column sep = 1.5em]
\Acris \arrow[r, "\vartheta"]  \arrow[d, "\theta"] & W(k) \arrow[d] \\
\mO_C/p \arrow[r] & k
\end{tikzcd}
\]
of maps of rings. 
Therefore, the base change of $h_W$ to $k$ is $\gamma_{\textup{dR}}^{-1} \otimes^\L k$, which is a quasi-isomorphism, then apply derived Nakayama's lemma. 
\end{proof} 

\begin{remark} 
We do not know whether this directly implies that $h_{\textup{crys}}$ is an quasi-isomorphism. The issue is that the ideal $I= \ker(\Acris \ra \mO_C/p)$ is not finitely generated, so we do not know whether derived $I$-completion is a well-behaved notion (in particular we do not know whether the derived Nakayama's lemma still holds in this context). 
\end{remark} 
 

\section{The $\Bcris$ comparison  theorem} \label{sec:Bcris}

In this section we analyze the cohomology groups $H^*_{\Ainf} (\fX)$, and prove the remaining part of Theorem \ref{mainthm:BKF_and_C_cris} from the introduction. Throughout this section we further assume that $\fX$ is proper.  

\subsection{The individual cohomology groups}
 
\begin{definition} \label{def:BKF_module}
A Breuil--Kisin--Fargues module is a pair $(M, \varphi_M)$ where $M$ is a finitely presented $\Ainf$-module such that $M [\frac{1}{p}]$ is free over $\Ainf[\frac{1}{p}]$, and $\varphi_M$ is a $\varphi$-linear isomorphism 
$\varphi_M: M [\frac{1}{\xi}] \isom M [\frac{1}{\varphi(\xi)}].$
\end{definition} 

We will need the following classification result of Fargues for the proof of Theorem \ref{thm:BMS_on_recoverfromgeneric}.\footnote{As remarked in \cite{BMS}, only the easy direction of the equivalence (namely fully faithfulness) is needed.} 

\begin{theorem}[Fargues] \label{thm:Fargues} There is an equivalence of categories between finite free Breuil--Kisin--Fargues modules and the category of pairs $(T, \Xi)$, where $T$ is a finite free $\Z_p$-module and $\Xi$ is a $\Bdrp$-lattice in $T \otimes_{\Z_p} \Bdr$, given by the functor
$$(M, \varphi_M) \mapsto \Big( (M \otimes W(C^\flat))^{\varphi_M = 1}, M \otimes \Bdrp \Big).$$
\end{theorem}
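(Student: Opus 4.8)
The strategy is the standard dévissage to the geometric points of $\spec \Ainf$, exploiting the fact that $\Ainf$ is a two-dimensional regular ring whose punctured spectrum is covered by two basic open pieces. First I would recall that $\Ainf$ is local with maximal ideal $(p, [\varpi])$ (for a pseudo-uniformizer $\varpi$ of $\mO_C^\flat$), and that $\spec \Ainf \setminus \{\fm\}$ decomposes, as far as vector bundles are concerned, into the two loci where $p$ is inverted and where $[\varpi]$ is inverted. The ring $\Ainf[\tfrac1p]$ is a PID, and $\Ainf[\tfrac1{[\varpi]}]$ has $W(C^\flat)$ as a localization at the prime $(p)$; crucially $\Ainf/\xi = \mO_C$ and the $\xi$-adic completion of $\Ainf[\tfrac1p]$ at $\xi$ is $\Bdrp$. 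Thus a finite free Breuil--Kisin--Fargues module $(M,\varphi_M)$, restricted to the punctured spectrum, is a $\varphi$-equivariant vector bundle, and the theory of $\varphi$-modules over $\Ainf[\tfrac1p]$ (via the slope/Kedlaya-type classification, or directly via the equivalence with $B$-pairs) shows that the isocrystal data over $W(C^\flat)$ is captured by the $\Z_p$-lattice $T := (M\otimes W(C^\flat))^{\varphi_M=1}$, which is finite free of rank $\operatorname{rank} M$ precisely because the isomorphism $\varphi_M$ has no poles or zeros away from $\xi, \varphi(\xi)$.

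Next I would run the gluing argument: a finite free Breuil--Kisin--Fargues module is equivalent to the data of (i) a finite free $\varphi$-module over $\Ainf[\tfrac1p]$, (ii) a finite free $\varphi$-module over $\Ainf[\tfrac1{[\varpi]}]$, together with (iii) an identification of the two after base change to $\Ainf[\tfrac1{p[\varpi]}]$ — this is Beauville--Laszlo glueing along $\fm$, valid because $M[\tfrac1p]$ is free and $M$ is finitely presented with the only $p$-power torsion allowed being killed by inverting $p$. On piece (ii), $\varphi$ acts without poles, so by the classification of $\varphi$-modules over $\Ainf[\tfrac1{[\varpi]}]$ (respectively over $W(C^\flat)$, using that Frobenius-fixed points give an étale-type descent) the module is determined by $T$ and is in fact $T\otimes_{\Z_p}\Ainf[\tfrac1{[\varpi]}]$ with its natural Frobenius. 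On piece (i), $\varphi_M$ becomes an isomorphism $M[\tfrac1p][\tfrac1\xi]\cong M[\tfrac1p][\tfrac1{\varphi(\xi)}]$, and Frobenius-descent along $\Ainf[\tfrac1p]\to W(C^\flat)$ (completing at $\xi$ versus at the other primes) shows that the only extra datum, beyond $T$, is how the lattice $M[\tfrac1p]$ sits inside $T\otimes \Bdr$ near $\xi$ — i.e. a $\Bdrp$-lattice $\Xi$ in $T\otimes_{\Z_p}\Bdr$. Conversely, given $(T,\Xi)$ one glues $T\otimes\Ainf[\tfrac1{[\varpi]}]$ with the lattice over $\Bdrp$ specified by $\Xi$ to reconstruct $M$ and $\varphi_M$, and one checks finite presentation of $M$ over $\Ainf$ by the fact that $\Ainf$ is coherent (Kedlaya) and $M$ agrees with a finitely presented module away from $\fm$.

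Finally I would verify that these two functors are mutually inverse and both exact/fully faithful, the fully faithfulness being the only part actually needed downstream (as the footnote notes): a map of Breuil--Kisin--Fargues modules is determined by its restrictions to the two open pieces of the punctured spectrum, hence by the induced maps on $(T,\Xi)$, and conversely a compatible pair $(T,\Xi)\to(T',\Xi')$ glues to a map $M\to M'$ over the punctured spectrum which extends over $\fm$ since $\Hom_{\Ainf}(M,M')$ agrees with the sections over the punctured spectrum by the depth-$2$/$S_2$ property of the finite free (after inverting $p$) module $M'$. The main obstacle I anticipate is making the glueing rigorous at the level of the Frobenius structure — in particular verifying that on the $[\varpi]$-inverted (equivalently $W(C^\flat)$) side the $\varphi$-module really is \emph{trivialized} by its fixed points $T$ (this is where one invokes that $W(C^\flat)$ is a complete discrete valuation ring with algebraically closed residue field $C^\flat$ and uses the Dieudonné--Manin / Kedlaya slope filtration together with the $\varphi$-isomorphism having unit determinant away from $\xi$), and in controlling the finiteness of $M$ near the closed point. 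I would lean on \cite{BMS} for the precise statements of these inputs rather than reproving them.
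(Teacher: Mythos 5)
The paper does not actually prove this statement: it is imported wholesale from Fargues's classification (Theorem 4.28 of \cite{BMS}), and, as the footnote to the theorem records, only full faithfulness is used later on. So there is no internal proof to compare you against; what I can do is measure your sketch against the argument of Fargues/\cite{BMS}, whose architecture you have essentially reproduced: trivialize the unit-root $\varphi$-module $M\otimes_{\Ainf}W(C^\flat)$ by its Frobenius invariants $T$ (this uses exactly that $W(C^\flat)$ is a $p$-adically complete discrete valuation ring with algebraically closed residue field $C^\flat$, by successive approximation mod $p$; no slope theory is needed, since $\varphi$ is an integral isomorphism there), record the remaining datum at $\xi$ as the lattice $\Xi=M\otimes\Bdrp\subset T\otimes\Bdr$, and recover $M$ by gluing over the punctured spectrum $\spec\Ainf\setminus\{\fm\}$ and extending across $\fm$ via Kedlaya's theorem that vector bundles on the punctured spectrum are free, with $\Ainf$ as ring of global sections. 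Your full-faithfulness argument via sections over the punctured spectrum is also the standard one.

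However, several supporting claims you lean on are wrong or unavailable, and they sit at load-bearing points. (a) $\Ainf[\frac{1}{p}]$ is not a PID (it is not even noetherian), so the appeal to a ``slope/Kedlaya-type classification of $\varphi$-modules over $\Ainf[\frac{1}{p}]$'' does not make sense as stated; in \cite{BMS} the corresponding step is the concrete identification $M\otimes_{\Ainf}W(C^\flat)\cong T\otimes_{\Z_p}W(C^\flat)$ together with the observation that $\mu$ is a unit multiple of $\xi$ in $\Bdrp$, and the inverse functor is written down explicitly as the fiber product $\big(T\otimes_{\Z_p}\Ainf[\frac{1}{\mu}]\big)\times_{T\otimes\Bdr}\Xi$ --- your sketch leaves precisely this heart of the matter (``Frobenius-descent shows the only extra datum is the lattice'') unargued. (b) ``Beauville--Laszlo gluing along $\fm$'' is not a correct description: $\fm$ is not principal, and what is used is ordinary Zariski gluing over the two opens $\{p\neq 0\}$ and $\{[\varpi]\neq 0\}$ of the punctured spectrum; Beauville--Laszlo-type completion arguments enter, if at all, along $\xi$ (between $\Ainf[\frac{1}{\mu}]$ and $\Bdrp$) and along $p$ (between $\Ainf[\frac{1}{[\varpi]}]$ and its $p$-completion $W(C^\flat)$, which is a completion, not a localization at $(p)$ as you write). (c) You cannot invoke ``coherence of $\Ainf$ (Kedlaya)'' to get finite presentation of the glued module: coherence of $\Ainf$ is not an available input, and \cite{BMS} is set up expressly to avoid needing it; the finiteness comes again from Kedlaya's freeness theorem for vector bundles on the punctured spectrum. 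With these repairs your outline matches the known proof, but as written it would not go through.
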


\begin{theorem}\label{thm:valuation_in_BKF} Let $\fX$ be a smooth proper formal scheme over $\spf \mO_C$.
The cohomology group $H^i_{\Ainf} (\fX)$ takes value in Breuil--Kisin--Fargues modules, and vanishes for $i > 2 \dim_{/\mO_C} \fX$.
\end{theorem}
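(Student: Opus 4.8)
The plan is to deduce everything from the three comparison results already available: the de Rham comparison $A\Omega_R \otimes^{\L}_{\theta} \mO_C \cong R\Gamma_{\mathrm{dR}}(\fX)$, the \'etale comparison $R\Gamma_{\Ainf}(\fX)[\tfrac{1}{\mu}] \cong R\Gamma(X_{\ett},\Z_p)\otimes_{\Z_p}\Ainf[\tfrac{1}{\mu}]$, together with the crystalline comparison of Corollary \ref{cor:crystalline_comparison}, namely $R\Gamma_{\Ainf}(\fX)\widehat{\otimes}^{\L}_{\Ainf,\vartheta}W(k)\cong R\Gamma_{\crys}(\fX_k/W(k))$. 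First I would record the structural facts about $C:=R\Gamma_{\Ainf}(\fX)$: it is a perfect complex of $\Ainf$-modules (this follows from the de Rham comparison plus derived $\xi$-completeness, as noted in the introduction), and it is concentrated in degrees $[0, 2\dim\fX]$. For the vanishing above $2\dim\fX$, I would argue that $R\Gamma_{\mathrm{dR}}(\fX)$ — equivalently, by crystalline comparison, $R\Gamma_{\crys}(\fX_k/W(k))$ — is concentrated in degrees $\le 2\dim\fX$ (classical fact for smooth proper $\fX_k$ of dimension $d$), and then transfer this bound back to $C$ via derived Nakayama along $\vartheta:\Ainf\to W(k)$: since $C\widehat{\otimes}^{\L}_{\vartheta}W(k)$ lives in degrees $\le 2d$ and $C$ is perfect and derived $(p,\xi)$-complete (hence derived $\ker(\vartheta)$-complete, $\ker\vartheta = (p,\varphi^{-1}(\xi))$ up to issues I will need to check), the complex $C$ itself sits in degrees $\le 2d$. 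So $H^i_{\Ainf}(\fX) = 0$ for $i > 2d$.

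Next, the Breuil--Kisin--Fargues property of $M := H^i_{\Ainf}(\fX)$. By Definition \ref{def:BKF_module} I must show (a) $M$ is finitely presented over $\Ainf$, (b) $M[\tfrac1p]$ is free over $\Ainf[\tfrac1p]$, and (c) there is a $\varphi$-linear isomorphism $\varphi_M\colon M[\tfrac1\xi]\isom M[\tfrac1{\varphi(\xi)}]$. Finite presentation follows from perfectness of $C$ once one knows the cohomology is finitely generated; the subtlety is that perfectness alone gives finitely presented cohomology only if the complex has, say, a bounded flat or projective resolution in the right range — here I would instead run an induction on $i$ (following Morrow, as flagged in the introduction) to simultaneously control that $H^i_{\Ainf}(\fX)$ is finitely presented and that $H^i_{\Ainf}(\fX)[\tfrac1p]$ has the expected rank, using the $\etale$ comparison to identify $H^i_{\Ainf}(\fX)\otimes_{\Ainf}\Ainf[\tfrac1\mu]$ with $H^i_{\ett}(X,\Z_p)\otimes\Ainf[\tfrac1\mu]$ (a finite free module after inverting $\mu$) and the crystalline comparison to control the special fiber. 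The Frobenius structure map $\varphi_M$ comes from the Frobenius $\varphi$ on $C = R\Gamma_{\Ainf}(\fX)$: since $L\eta_\mu$ interacts with $\varphi$ so that $\varphi$ induces an isomorphism $C\otimes_{\Ainf,\varphi}\Ainf[\tfrac1\xi]\isom C[\tfrac1{\varphi(\xi)}]$ (this is built into the $\Ainf$-cohomology formalism), taking $H^i$ and noting that inverting $\xi$ or $\varphi(\xi)$ is flat yields $\varphi_M$ on cohomology. For freeness of $M[\tfrac1p]$: after inverting $p$, $\Ainf[\tfrac1p]$ is close to a PID-like situation away from the closed point, and a finitely presented module with an isomorphism $M[\tfrac1\xi]\cong M[\tfrac1{\varphi(\xi)}]$ that is \'etale-locally free can be shown free by the standard Beauville--Laszlo / glueing argument over $\spec\Ainf[\tfrac1p]$; alternatively I would cite that $H^i_{\Ainf}(\fX)$ is torsion-free modulo torsion supported at the closed point and use the structure of finitely generated $\Ainf[\tfrac1p]$-modules.

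The main obstacle I expect is exactly the finite-generation/finite-presentation step for the individual $H^i$ combined with the rank count — this is where Morrow's degreewise induction is essential, because perfectness of $C$ does not immediately localize to each cohomology group, and one needs the \'etale and crystalline specializations to pin down both the generic rank (over $\Ainf[\tfrac1\mu]$) and the special fiber behavior (over $W(k)$) before one can conclude the BKF structure. A secondary technical point is justifying the derived Nakayama argument along $\vartheta$ when $\ker\vartheta$ is not the ideal $(p,\xi)$ used for completion; I would handle this by noting $\ker\vartheta$ and $(p,\xi)$ have the same radical in $\Ainf$ (both cut out the closed point $\spec W(k)/p = \spec k$ inside $\spec\Ainf$ after modding $p$), so derived $(p,\xi)$-completeness suffices to run Nakayama along $\vartheta$ modulo $p$, which is all that is needed for the degree bound. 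The vanishing statement $H^i_{\Ainf}(\fX)=0$ for $i>2\dim\fX$ is then the easy consequence, and I would present it first.
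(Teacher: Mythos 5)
Your skeleton is the right one (descending induction in the degree following Morrow, perfectness of the truncations, Frobenius from the $L\eta$ formalism, \'etale comparison to control $H^i_{\Ainf}(\fX)[\frac{1}{\mu}]$), but there is a genuine gap at the crucial quantitative step. To apply Morrow's criterion (Lemma \ref{lemma:Morrow_criterion_for_free}) --- or any variant of it, including your proposed ``Beauville--Laszlo / glueing over $\spec \Ainf[\frac{1}{p}]$'' --- you must know beforehand that the generic rank equals the special rank, i.e.\ that $\textup{rk}_{\Z_p} H^i_{\ett}(X,\Z_p) = \textup{rk}_{W(k)} H^i_{\crys}(\fX_k/W(k))$. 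The inputs you list (de Rham comparison, \'etale comparison, and the $W(k)$-specialization of Corollary \ref{cor:crystalline_comparison}) do not yield this: $\mu$ dies under $\vartheta$, so the \'etale comparison says nothing at the $W(k)$-point, and for a perfect complex whose cohomology is not yet known to be free, semicontinuity only gives the inequality $\textup{rk}_{W(k)} H^i_{\crys} \ge \textup{rk}_{\Z_p} H^i_{\ett}$, not equality; a finitely presented $\Ainf[\frac{1}{p}]$-module with $\varphi$-structure is certainly not automatically free without that rank input (this hypothesis is exactly what Lemma \ref{lemma:Morrow_criterion_for_free} demands). In the paper the equality is Corollary \ref{cor:same_rank}, and it is obtained from two ingredients you never invoke: Lemma \ref{lemma:invalid}, which shows that $h_{\crys}\otimes\Bdrp$ is an isomorphism $R\Gamma_{\crys}(\fX_{\mO_C/p}/\Acris)\otimes^{\L}_{\Acris}\Bdrp \cong R\Gamma_{\Ainf}(\fX)\otimes^{\L}\Bdrp$ (derived $\xi$-completeness plus the de Rham comparison and derived Nakayama), and the Berthelot--Ogus-type Lemma \ref{lemma:crys_BO}, which identifies $H^i_{\crys}(\fX_{\mO_C/p}/\Acris)[\frac{1}{p}]$ with $H^i_{\crys}(\fX_k/W(k))\otimes_{W(k)}\Bcrisp$ as a finite free $\Bcrisp$-module via a Frobenius-twisting argument. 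Without some mechanism of this kind --- i.e.\ without actually using the map $h_{\crys}$ over $\Acris$/$\Bdrp$, not merely its $W(k)$-specialization --- the induction cannot close, and this is precisely the ``main obstacle'' you flag but do not resolve.

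A secondary point: your justification of the degree bound via derived Nakayama along $\vartheta$ is flawed. The ideals $(p,\xi)$ and $\ker\vartheta$ do \emph{not} have the same radical: $\Ainf/(p,\xi)=\mO_C/p$ is far from $k$, and $\ker\vartheta = (p, W(\fm^\flat))$ is not finitely generated, which is exactly the situation in which derived Nakayama is delicate (cf.\ the remark following Corollary \ref{cor:crystalline_comparison}). The fix is trivial and is what the paper does: specialize along $\theta$ instead, using that $R\Gamma_{\Ainf}(\fX)$ is derived $\xi$-complete and that $R\Gamma_{\Ainf}(\fX)\otimes^{\L}_{\theta}\mO_C \cong R\Gamma_{\textup{dR}}(\fX)$ lives in degrees $\le 2\dim\fX$; the vanishing is then immediate.
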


\begin{remark} \label{remark:valuation_in_BKF}
The claim on vanishing of cohomology is immediate from the de Rham comparison. For the (implicit) claim on Frobenius, observe that the Frobenius on $R\nu_*\Ainfx$ induces the desired map on the level of sheaves 
$$ L\eta_{\mu} R\nu_*\Ainfx \isom L \eta_{\varphi(\xi)} L\eta_{\mu} R\nu_*\Ainfx \longrightarrow L\eta_{\mu} R\nu_*\Ainfx$$
on the pro-$\etale$ site, which becomes an isomorphism after inverting $\varphi(\xi).$ The first isomorphism essentially follows from definition of $L\eta$ (note that $\mu$ is regular in $\Ainf$). The existence of the second arrow is not entirely formal: it uses in particular that $\mH^0(L \eta_{\mu} R \nu_* \Ainfx)$ has no $\varphi(\xi)$-torsion and properties of the $L\eta$ operator (see Lemma 5.8 of \cite{Morrow}). 

It remains to show that $H^i_{\Ainf} (\fX)$ is finitely presented and becomes free after inverting $p$. Our approach follows the method outlined in \cite{Morrow}, which is different from the proof given in\cite{BMS}, where they need to identify $R\Gamma_{crys} (\fX_{\mO_C/p}/\Acris)$ with $R \Gamma_{\Ainf}(\fX) \otimes^\L \Acris$ (at least after inverting $p$). Our proof uses a careful descending induction on the degree $i$ of cohomology, where the most difficult part is to show that $H^i_{\Ainf} (\fX) [\frac{1}{p}]$ is finite free. We will complete the proof in the next subsection. 
\end{remark}

\subsection{Valuation in Breuil--Kisin--Fargues modules} \label{ss:BKF} 
We continue to assume that  $\fX$ is smooth proper over $\mO_C$. 
\begin{lemma} \label{lemma:invalid} 
Via base change along $\Acris \ra \Bdrp$, we have a functorial isomorphism $h_{\crys} \otimes \Bdrp$:   $$R \Gamma_{\crys} (\fX_{\mO_C/p}/\Acris) \otimes_{\Acris}^\L \Bdrp \isom R \Gamma_{\Ainf} (\fX) \otimes_{\Ainf}^\L \Bdrp.$$
\end{lemma}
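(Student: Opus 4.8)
The plan is to deduce this from Theorem~\ref{thm:construction_of_h_cris} together with a derived Nakayama argument over $\Bdrp$, which is legitimate here (unlike over $\Acris$) because the kernel of $\Bdrp \twoheadrightarrow C$ is generated by the single element $\xi$. First I would base change the map $h_{\textup{crys}}$ along $\Acris \to \Bdrp$ to obtain a functorial $\Bdrp$-linear map
\[
h_{\crys}\otimes\Bdrp:\; R\Gamma_{\crys}(\fX_{\mO_C/p}/\Acris)\otimes^\L_{\Acris}\Bdrp \;\longrightarrow\; R\Gamma_{\Ainf}(\fX)\otimes^\L_{\Ainf}\Bdrp .
\]
Since $\fX$ is smooth proper, $R\Gamma_{\Ainf}(\fX)$ is a perfect $\Ainf$-complex (via the de Rham comparison, as noted in the introduction), and $R\Gamma_{\crys}(\fX_{\mO_C/p}/\Acris)$ is likewise a perfect $\Acris$-complex (crystalline cohomology of a smooth proper scheme over a noetherian-enough base, or by comparison with derived de Rham cohomology which is perfect over $\mO_C/p$ and hence perfect over $\Acris$ after completion); in particular both sides of the displayed map are perfect $\Bdrp$-complexes and are derived $\xi$-complete. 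Note that the $p$-completed tensor product agrees with the non-completed one here since the complexes are perfect, so I may freely drop the hats.

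Next I would reduce checking that $h_{\crys}\otimes\Bdrp$ is a quasi-isomorphism to checking this modulo $\xi$, i.e.\ after applying $-\otimes^\L_{\Bdrp} C$. By derived Nakayama for the $\xi$-adically complete complexes above over the $\xi$-adically complete local ring $\Bdrp$, it suffices to show the cone of $h_{\crys}\otimes\Bdrp$ vanishes after $-\otimes^\L_{\Bdrp}C$. But the two base-change-to-$C$ maps fit into a diagram: on the crystalline side, $R\Gamma_{\crys}(\fX_{\mO_C/p}/\Acris)\otimes^\L_{\Acris}C \cong R\Gamma_{\dR}(\fX_{\mO_C/p})\otimes^\L_{\mO_C/p}C \cong R\Gamma_{\dR}(\fX/\mO_C)\otimes^\L_{\mO_C}C$ (using the crystalline-to-de Rham comparison over the PD base $\Acris\to\mO_C/p\to C$, together with flat/derived base change compatibilities), and on the $\Ainf$ side, $R\Gamma_{\Ainf}(\fX)\otimes^\L_{\Ainf}\Bdrp\otimes^\L_{\Bdrp}C = R\Gamma_{\Ainf}(\fX)\otimes^\L_{\Ainf,\theta}\mO_C\otimes^\L_{\mO_C}C \cong R\Gamma_{\dR}(\fX)\otimes^\L_{\mO_C}C$ by the de Rham comparison $\gamma_{\dR}$. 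Under these identifications, the compatibility clause of Theorem~\ref{thm:construction_of_h_cris} says precisely that $h_{\crys}\otimes\Bdrp\otimes^\L_{\Bdrp}C$ is identified with $\gamma_{\dR}^{-1}$ (base changed to $C$), which is an isomorphism. Hence the cone vanishes mod $\xi$, so by derived Nakayama it vanishes, proving $h_{\crys}\otimes\Bdrp$ is an isomorphism. Functoriality in $\fX$ and $\varphi$-equivariance are inherited from those of $h_{\crys}$.

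I expect the main obstacle to be purely bookkeeping rather than conceptual: carefully tracking the chain of identifications so that the square ``base change of $h_{\crys}\otimes\Bdrp$ along $\xi=0$'' $=$ ``$\gamma_{\dR}^{-1}$'' genuinely commutes, i.e.\ that the crystalline-to-de Rham comparison isomorphism one uses on the source is compatible with the de Rham comparison $\gamma_{\dR}$ appearing in Theorem~\ref{thm:construction_of_h_cris}. This is exactly the content of the ``compatible with the de Rham comparison'' assertion already proved, so it should go through cleanly; one just has to make sure the base change $\mO_C/p \to C$ step (as opposed to $\mO_C/p$ itself) is inserted correctly, using that both cohomologies are perfect so all base changes are derived-compatible. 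A secondary point to verify is that derived Nakayama applies in the stated generality: this is standard for perfect (or even just bounded-above, derived $\xi$-complete) complexes over $\Bdrp$ since $(\xi)$ is a principal ideal and $\Bdrp$ is $\xi$-adically complete, so no subtlety of the kind flagged in the Remark after Corollary~\ref{cor:crystalline_comparison} arises.
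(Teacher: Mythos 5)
Your argument is correct and follows essentially the same route as the paper: base change $h_{\crys}$ to $\Bdrp$, note both sides are perfect (hence derived $\xi$-complete) complexes, apply derived Nakayama over $\Bdrp$, and identify the reduction mod $\xi$ with $\gamma_{\textup{dR}}^{-1}$ via the compatibility clause of Theorem~\ref{thm:construction_of_h_cris}. Your extra care about inserting the base change $\mO_C/p \to C$ correctly is a point the paper leaves implicit, but it is the same proof.
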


\begin{proof} 
As mentioned in the introduction, the $\Ainf$-cohomology $R\Gamma_{\Ainf}$ is derived $\xi$-adically complete 
thus it is a perfect complex in $D(\Ainf)$ by the de Rham comparison. On the other hand, the crystalline cohomology $R \Gamma_{\crys} (\fX_{\mO_C/p}/\Acris)$ is a perfect complex in $D(\Acris)$ (see, for example, Tag 07MY in \cite{SP}).  In particular, both sides of the map given above are derived $\xi$-adically complete (to see this, note that $M \in D(\Bdrp)$ is derived complete if and only if each $H^i(M)$ is derived $\xi$-adically complete, and then observe that finitely presented modules over $\Bdrp$ are derived $\xi$-complete). Therefore, to prove the lemma it suffices to show that $h_{\crys} \otimes \Bdrp$ becomes an isomorphism after reducing mod $\xi$, this in turn follows from Theorem \ref{thm:construction_of_h_cris} and the de Rham comparison. 
\end{proof} 

We also need the following variant of the Berthelot--Ogus comparison isomorphism, as already observed in \cite{BMS}. For this lemma we fix a section $k \ra \mO_C/p$. 

\begin{lemma}[BMS] \label{lemma:crys_BO}
There is a natural $\varphi$-equivariant isomorphism $$ H^i_{\crys}(\fX_{\mO_C/p}/\Acris)[\frac{1}{p}] \isom H^i_{\crys} (\fX_k/W(k)) \otimes_{W(k)} \Bcrisp.$$  In particular, $ H^i_{\crys}(\fX_{\mO_C/p}/\Acris)[\frac{1}{p}]$ is a finite free $\Bcrisp$-module. 
\end{lemma}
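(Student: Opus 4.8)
The plan is to reduce this to the classical Berthelot--Ogus isogeny theorem by exploiting the fact that $\Acris$ contains a copy of $W(k)$ (via the section $k \hookrightarrow \mO_C/p$) and that $\Bcrisp = \Acris[\tfrac1p]$ is obtained from $W(k)[\tfrac1p]$ by inverting $t = \log[\epsilon]$ up to completion issues. First I would observe that the section $k \to \mO_C/p$ lifts canonically (by the universal property of Witt vectors, since $W(k)$ is the $p$-complete PD-base for $k$) to a PD-map $W(k) \to \Acris$ compatible with the reductions mod $p$; this exhibits the crystalline site of $\fX_{\mO_C/p}/\Acris$ as a base change of the crystalline site of $\fX_k/W(k)$ along $W(k) \to \Acris$, at least after noting that $\fX_{\mO_C/p}$ is the base change of $\fX_k$ along $k \to \mO_C/p$. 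Then the base change theorem for crystalline cohomology (flat base change for crystalline cohomology of smooth proper schemes, e.g. Berthelot--Ogus, or \cite{SP} Tag 07NE in the $p$-adic formal setting, after checking the Tor-independence needed) gives
$$R\Gamma_{\crys}(\fX_{\mO_C/p}/\Acris) \cong R\Gamma_{\crys}(\fX_k/W(k)) \widehat\otimes^\L_{W(k)} \Acris,$$
and since $R\Gamma_{\crys}(\fX_k/W(k))$ is a perfect complex of $W(k)$-modules, inverting $p$ and passing to cohomology yields
$$H^i_{\crys}(\fX_{\mO_C/p}/\Acris)[\tfrac1p] \cong H^i_{\crys}(\fX_k/W(k)) \otimes_{W(k)} \Bcrisp,$$
using that $\Bcrisp = \Acris[\tfrac1p]$ is flat over $W(k)[\tfrac1p]$ (indeed over $W(k)$) so that $\otimes^\L$ becomes $\otimes$ and commutes with taking $H^i$ for a perfect complex. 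The Frobenius-equivariance is then immediate from the functoriality of the comparison in the Frobenius, since the section $k \to \mO_C/p$ and the lift $W(k) \to \Acris$ are $\varphi$-equivariant by Witt-vector functoriality. Finiteness and freeness of the module follow because $H^i_{\crys}(\fX_k/W(k)) \otimes_{W(k)} W(k)[\tfrac1p]$ is a finite-dimensional $W(k)[\tfrac1p]$-vector space (the crystalline cohomology of a smooth proper variety over a perfect field is a finitely generated $W(k)$-module), hence finite free, and base change along $W(k)[\tfrac1p] \to \Bcrisp$ preserves finite freeness.

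The main obstacle I anticipate is the base change statement for crystalline cohomology over the non-noetherian, non-finitely-generated PD-base $\Acris \to \mO_C/p$: the classical flat base change theorems for crystalline cohomology are usually stated for noetherian bases or require some care about completions, and $\Acris$ is neither noetherian nor $p$-torsion in the relevant sense. The cleanest route around this is probably to pass through the derived (left Kan extended) crystalline cohomology, where base change is essentially formal: the derived de Rham / derived crystalline cohomology of $\fX_k/W(k)$ base changed along $W(k) \to \Acris$ agrees with the derived crystalline cohomology of $\fX_{\mO_C/p}/\Acris$ because $\fX_{\mO_C/p} = \fX_k \times_k (\mO_C/p)$ and derived crystalline cohomology commutes with such base change by its very construction via left Kan extension from polynomial algebras; one then invokes the fact (used elsewhere in the excerpt, and standard for smooth proper schemes) that derived crystalline cohomology agrees with honest crystalline cohomology in the smooth case. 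Alternatively, as the excerpt itself indicates (``as already observed in \cite{BMS}''), this is precisely the content of the relevant lemma in \cite{BMS}, so one may simply cite that. I would present the argument via derived crystalline cohomology to keep it self-contained, flagging that after inverting $p$ one is comparing finite free modules so no completion subtleties survive, and reserve the citation to \cite{BMS} as a cross-reference.
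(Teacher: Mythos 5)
There is a genuine gap: your argument rests on the claim that $\fX_{\mO_C/p}$ is the base change of $\fX_k$ along $k \to \mO_C/p$ (you use this twice, once to set up the crystalline base change and again in the derived-crystalline variant). This is false in general. The formal scheme $\fX$ is an arbitrary smooth proper formal scheme over $\mO_C$, and its reduction $\fX_{\mO_C/p}$ is a deformation of $\fX_k$ over $\mO_C/p$ that need not be the constant one, and in any case carries no canonical identification with $\fX_k \times_{\spec k} \spec \mO_C/p$. The paper is careful on exactly this point: the identification with the constant deformation holds only locally on affine opens (by smoothness), and globally only modulo the much smaller ideal $(p^{1/p^n})$ for $n$ large, i.e.\ $\iota: \fX_{\mO_C/p^{1/p^n}} \cong \fX_k \times_{\spec k} \spec \mO_C/p^{1/p^n}$, with the further subtlety that $\iota$ is only well defined up to enlarging $n$. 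The whole content of the lemma is the Berthelot--Ogus isogeny mechanism that converts this weak statement into the asserted isomorphism: one factors the absolute Frobenius $\varphi^n$ on $\fX_{\mO_C/p}$ through $\fX_{\mO_C/p^{1/p^n}}$, uses that $\varphi$ on $H^i_{\crys}(\fX_{\mO_C/p}/\Acris)$ becomes invertible after inverting $p$ (checked affine-locally via the local triviality of the deformation), and transports the crystalline cohomology of $\fX_k$ through $\iota$ and a power of Frobenius. Your proposal skips this mechanism entirely; without it there is no map to base change along, and the "flat base change plus perfectness" computation never gets started. The appearance of $[\frac{1}{p}]$ in the statement is itself the tell: on your (false) premise one would get an integral isomorphism $H^i_{\crys}(\fX_{\mO_C/p}/\Acris) \cong H^i_{\crys}(\fX_k/W(k)) \widehat\otimes_{W(k)} \Acris$, which is not true in general.

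Two smaller points. First, you cannot simply cite the classical Berthelot--Ogus theorem as a black box here: it is stated over noetherian (DVR-type) bases, whereas $\mO_C/p$ and $\Acris$ are non-noetherian and the kernel of $\Acris \to \mO_C/p$ is a non-finitely-generated PD-ideal, so the argument has to be rerun in this setting --- which is precisely what the paper (following BMS) does with the explicit Frobenius diagram involving $\textup{A}^{(\varphi^n)}_{\textup{cris}}$, the $p$-completed PD-envelope of $\Ainf \twoheadrightarrow \mO_C/p^{1/p^n}$. Second, naturality and $\varphi$-equivariance are not automatic in this approach: since $\iota$ is only canonical after enlarging $n$, one must check (as the paper notes) that the resulting map is independent of the choices, which is where the Frobenius twisting is again used. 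If you repair the proof, the correct skeleton is exactly the paper's: local triviality of the deformation $\Rightarrow$ $\varphi$ invertible on $H^i_{\crys}(\fX_{\mO_C/p}/\Acris)[\frac1p]$; global triviality mod $p^{1/p^n}$ $\Rightarrow$ comparison with $H^i_{\crys}(\fX_k/W(k)) \otimes \Bcrisp$ after twisting by $\varphi^n$.
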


\begin{proof} 
The point is that the Frobenius $\varphi$ on the crystalline cohomology $H^i_{\crys} (\fX_{\mO_C/p}/\Acris)$ becomes an isomorphism after inverting $p$ (since on affine opens $U$, there is an isomorphism $U \cong U_k \times_{\spec k} \spec \mO_C/p$ by the smoothness assumption, then apply base change). Moreover, there exists a large enough $n$ such that $$\iota: \fX_{\mO_C/p^{1/p^n}} \cong \fX_k \times_{\spec k} \spec \mO_C/p^{1/p^n},$$ 
and any two such isomorphisms agree once we enlarge $n$. Now by repeatedly applying Frobenius we obtain the following diagram  
\[
\begin{tikzcd}[row sep = 2em]
H^i_{\crys}(\fX_{\mO_C/p^{1/p^n}}/ \textup{A}^{(\varphi^n)}_{\textup{cris}} ) \otimes_{\varphi^n} \Bcrisp \arrow{r}{\sim}[swap]{\varphi^n} & H^i_{\crys}(\fX_{\mO_C/p}/\Acris) [\frac{1}{p}] \\
H^i_{\crys}(\fX_k /W(k)) \otimes_{\varphi^n} \Bcrisp \arrow{u} \arrow{r}{\varphi^n \otimes \textup{id}}[swap]{\sim}  
&  H^i_{\crys}(\fX_k /W(k)) \otimes \Bcrisp \arrow[u, dashed]
\end{tikzcd}
\]
where $ \textup{A}^{(\varphi^n)}_{\textup{cris}}$ denotes the $p$-completed PD-envelop of $\Ainf \twoheadrightarrow \mO_C/p^{1/p^n}$. The top horizontal isomorphism is induced by the absolute Frobenius $\varphi^n$ on $\fX_{\mO_C/p}$, which factors as $\fX_{\mO_C/p} \xrightarrow{\textup{pr}} \fX_{\mO_C/p^{1/p^n}} \xrightarrow{\varphi^n} \fX_{\mO_C/p}$, and the left vertical isomorphism comes from the identification $\iota$ noted above. 
\end{proof} 

\begin{corollary} \label{cor:same_rank}
Continue to assume that $\fX$ is smooth proper over $\mO_C$, then  $$\textup{rk}_{W(k)}  H^i_{\crys} (\fX_{k}/W(k))  = \textup{rk}_{\Z_p} H^i_{\ett} (X, \Z_p).$$
\end{corollary}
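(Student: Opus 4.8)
The plan is to relate both ranks to the common $\Bdrp$-module appearing in Lemma~\ref{lemma:invalid}, and to use the $\etale$ and crystalline comparisons that have already been established. First I would invoke Lemma~\ref{lemma:crys_BO}: after inverting $p$, $H^i_{\crys}(\fX_{\mO_C/p}/\Acris)[\tfrac{1}{p}]$ is a finite free $\Bcrisp$-module of rank $\textup{rk}_{W(k)} H^i_{\crys}(\fX_k/W(k))$. Next, using that $R\Gamma_{\crys}(\fX_{\mO_C/p}/\Acris)$ is a perfect complex of $\Acris$-modules and that $\Bdrp$ is the $\xi$-adic (equivalently, $\ker(\theta)$-adic) completion of $\Acris[\tfrac{1}{p}]$, base changing along $\Acris[\tfrac{1}{p}] \to \Bdrp$ shows that $H^i_{\crys}(\fX_{\mO_C/p}/\Acris)\otimes_{\Acris}^\L \Bdrp$ is a finite free $\Bdrp$-module of the same rank. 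By Lemma~\ref{lemma:invalid}, this module is identified with $R\Gamma_{\Ainf}(\fX)\otimes_{\Ainf}^\L \Bdrp$ in degree $i$; in particular $R\Gamma_{\Ainf}(\fX)\otimes_{\Ainf}^\L \Bdrp$ is concentrated in the expected degrees with each $H^i$ finite free over $\Bdrp$.

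Now I would bring in the $\etale$ comparison. Inverting $\mu$ (equivalently, passing to $\Bdr = \Bdrp[\tfrac{1}{\xi}]$, which factors through $\Ainf[\tfrac{1}{\mu}]$) gives
\[
R\Gamma_{\Ainf}(\fX)\otimes_{\Ainf}^\L \Bdr \;\cong\; R\Gamma(X_{\ett},\Z_p)\otimes_{\Z_p}\Bdr .
\]
Since $R\Gamma(X_{\ett},\Z_p)$ is a perfect complex of $\Z_p$-modules (properness and smoothness of $X$, via Scholze's finiteness theorem), the right-hand side has cohomology $H^i_{\ett}(X,\Z_p)\otimes_{\Z_p}\Bdr$, which is free over $\Bdr$ of rank $\textup{rk}_{\Z_p} H^i_{\ett}(X,\Z_p)$ once $i$ is such that the torsion does not interfere — but in fact, because the complex $R\Gamma_{\Ainf}(\fX)\otimes_{\Ainf}^\L\Bdrp$ is already a direct sum of shifts of finite free $\Bdrp$-modules (being perfect over a field-like ring... ), we get a clean identification of cohomology. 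The upshot is that the $\Bdr$-dimension of $H^i_{\ett}(X,\Q_p)\otimes\Bdr$, which is $\textup{rk}_{\Z_p} H^i_{\ett}(X,\Z_p)$, equals the $\Bdrp$-rank of $H^i_{\Ainf}(\fX)\otimes_{\Ainf}^\L\Bdrp = H^i_{\crys}(\fX_{\mO_C/p}/\Acris)\otimes_{\Acris}^\L\Bdrp$.

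Chaining these identifications yields
\[
\textup{rk}_{\Z_p} H^i_{\ett}(X,\Z_p) \;=\; \textup{rk}_{\Bdrp} \big(H^i_{\crys}(\fX_{\mO_C/p}/\Acris)\otimes_{\Acris}^\L \Bdrp\big) \;=\; \textup{rk}_{\Bcrisp} H^i_{\crys}(\fX_{\mO_C/p}/\Acris)[\tfrac{1}{p}] \;=\; \textup{rk}_{W(k)} H^i_{\crys}(\fX_k/W(k)),
\]
as desired.

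The main obstacle is the passage from the quasi-isomorphism of Lemma~\ref{lemma:invalid} to an equality of \emph{ranks of individual cohomology groups}: one must know that base-changing the perfect complex $R\Gamma_{\crys}(\fX_{\mO_C/p}/\Acris)$ along $\Acris[\tfrac{1}{p}]\to\Bdrp$ (and then inverting $\xi$) does not merge or split cohomology degrees in an uncontrolled way, and that the resulting $\Bdrp$-modules are finite free rather than merely finitely presented. The cleanest route is to observe that $H^i_{\crys}(\fX_{\mO_C/p}/\Acris)[\tfrac{1}{p}]$ is already finite free over $\Bcrisp$ by Lemma~\ref{lemma:crys_BO}, so the complex $R\Gamma_{\crys}(\fX_{\mO_C/p}/\Acris)[\tfrac1p]$ is formal (a sum of shifts of finite free modules) over $\Bcrisp$; base change along the flat map $\Bcrisp\to\Bdrp$ then preserves this, giving finite free $\Bdrp$-cohomology of the stated rank degree by degree, and one similarly tracks the further flat base change to $\Bdr$ against the $\etale$ comparison. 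I would also need the standard fact that $R\Gamma(X_{\ett},\Z_p)$ is perfect and that the $\etale$ comparison isomorphism is compatible with the $\Bdrp$-lattice structure, both of which are available in the setting of \cite{BMS}, \cite{Scholze}.
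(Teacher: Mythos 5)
Your proposal is correct and follows essentially the same route as the paper's proof: identify $R\Gamma_{\crys}(\fX_{\mO_C/p}/\Acris)\otimes^\L_{\Acris}\Bdr$ with $R\Gamma_{\ett}(X,\Z_p)\otimes_{\Z_p}\Bdr$ via Lemma \ref{lemma:invalid} and the \'etale comparison, then use the finite freeness of $H^i_{\crys}(\fX_{\mO_C/p}/\Acris)[\frac{1}{p}]$ from Lemma \ref{lemma:crys_BO} to pass to individual cohomology groups and compare ranks (the paper phrases your formality/base-change step as degeneration of the Tor spectral sequence, which is the same point). The side assertions you lean on (flatness of $\Bcrisp\to\Bdrp$, $\Bdrp$ as a completion of $\Acris[\frac{1}{p}]$, $\Bdrp$ being ``field-like'') are not actually needed: finite freeness of the cohomology after inverting $p$ already makes the derived base change underived on both the crystalline and \'etale sides.
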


\begin{proof} 
By Lemma \ref{lemma:invalid} and the $\etale$ comparison for $\Ainf$-cohomology, we have an isomorphism  $R \Gamma_{\crys} (\fX_{\mO_C/p}/\Acris) \otimes_{\Acris}^\L \Bdr \cong R \Gamma_{\ett} (X, \Z_p) \otimes \Bdr.$ Denote their common value 
by $D$, and for ease of notation write $H^i_{\crys} (\fX_{\mO_C/p})$ for the crystalline cohomology of $\fX_{\mO_C/p}$ over $\Acris$, then $$ H^i_{\crys} (\fX_{\mO_C/p}) \otimes_{\Acris} \Bdr \cong H^i(D)$$ by the spectral sequence $\textup{Tor}_a (H^{b}_{\crys} (\fX_{\mO_C/p}), \Bdr) \so H^{b-a}(D)$.
Note that all higher ($a \ge 1$) Tor terms $\textup{Tor}_a^{\Acris} (H^{b}_{\crys} (\fX_{\mO_C/p}), \Bdr)$ vanishes, since $p$ is already inverted, and $H^i_{\crys}(\fX_{\mO_C/p}/\Acris)[\frac{1}{p}]$ is finite free by Lemma \ref{lemma:crys_BO}. 
Similarly, we have the following isomorphism $$H^i_{\crys} (\fX_{\mO_C/p}) \otimes_{\Acris} \Bdr \cong H^i_{\ett} (X, \Z_p) \otimes_{\Z_p} \Bdr$$ of free $\Bdr$-modules.  The lemma then follows from Lemma \ref{lemma:crys_BO} once again. 
\end{proof} 

\begin{lemma} \label{lemma:relating_Tor_and_crystalline} 
\textit{Assume} that $H^{k}_{\Ainf} (\fX)[\frac{1}{p}]$ is finite free $\Ainf[\frac{1}{p}]$-modules for $k \ge i+1$, then the cohomology groups fit in the following short exact sequences  $$ 0 \ra H^i_{\Ainf} (\fX) \otimes W(k) \ra H^i_{\textup{crys}} (\fX_k/W(k)) \ra \textup{Tor}^1(H^{i+1}_{\Ainf}(\fX), W(k)) \ra 0.$$  In particular, if $H^i_{\textup{crys}} (\fX_k/W(k))$ is torsion free, then so is $ H^i_{\Ainf} (\fX) \otimes W(k)$. 
\end{lemma}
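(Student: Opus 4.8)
\textbf{Proof plan for Lemma \ref{lemma:relating_Tor_and_crystalline}.}

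The plan is to extract the short exact sequence from the universal-coefficient spectral sequence for the base change along $\vartheta: \Acris \to W(k)$, using the crystalline comparison $h_W$ of Corollary \ref{cor:crystalline_comparison} to identify the left-hand side with $R\Gamma_{\textup{crys}}(\fX_k/W(k))$. Concretely, since $R\Gamma_{\Ainf}(\fX)$ is a perfect complex in $D(\Ainf)$ (by the de Rham comparison, as recalled in the proof of Lemma \ref{lemma:invalid}), derived base change along $\Ainf \xrightarrow{\vartheta} W(k)$ gives a convergent spectral sequence
\[
E_2^{a,b} = \textup{Tor}^{\Ainf}_{-a}\!\big(H^{b}_{\Ainf}(\fX),\, W(k)\big) \;\Longrightarrow\; H^{a+b}\big(R\Gamma_{\Ainf}(\fX)\otimes^\L_{\Ainf,\vartheta} W(k)\big) \cong H^{a+b}_{\textup{crys}}(\fX_k/W(k)),
\]
the last identification being $h_W$. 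First I would observe that $W(k)$ has Tor-dimension $1$ over $\Ainf$: indeed $\ker(\vartheta)$ is generated by a regular sequence (one can take $(p^\flat)$ inside $\mO_C^\flat$, so that $\ker\vartheta$ is generated by a single element $[p^\flat]$ up to Witt-vector bookkeeping — more precisely $\vartheta$ factors as $W(\mO_C^\flat) \to W(\mO_C^\flat/(p^\flat)) \to W(k)$ and one checks the relevant kernel is generated by a nonzerodivisor, since $\mO_C^\flat$ is a valuation ring with $\mO_C^\flat/(p^\flat)\cong\mO_C/p$ and $k$ its quotient by the maximal ideal — this is exactly the setup already used implicitly in \cite{BMS}). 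Hence $E_2^{a,b} = 0$ unless $a \in \{0, -1\}$, so the spectral sequence degenerates to the short exact sequences
\[
0 \to H^i_{\Ainf}(\fX)\otimes_{\Ainf} W(k) \to H^i_{\textup{crys}}(\fX_k/W(k)) \to \textup{Tor}^{\Ainf}_1\!\big(H^{i+1}_{\Ainf}(\fX), W(k)\big) \to 0.
\]

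The role of the hypothesis is the following: a priori the $\textup{Tor}_1$ term could also interact with $H^i$ via a differential, but with Tor-dimension $1$ there is no room, so the hypothesis is not needed for the shape of the sequence itself. Where I expect to actually use that $H^{i+1}_{\Ainf}(\fX)[\tfrac1p]$ is finite free is to control the $\textup{Tor}_1$ term and make it ``small'' — in particular to later run the descending induction in Theorem \ref{thm:valuation_in_BKF}, one wants $\textup{Tor}^1(H^{i+1}_{\Ainf}(\fX), W(k))$ to be a \emph{torsion} $W(k)$-module, equivalently killed after inverting $p$. This follows because $H^{i+1}_{\Ainf}(\fX)[\tfrac1p]$ free over $\Ainf[\tfrac1p]$ means $\textup{Tor}^1(H^{i+1}_{\Ainf}(\fX), W(k))[\tfrac1p] = \textup{Tor}^{1,\Ainf[1/p]}(H^{i+1}_{\Ainf}(\fX)[\tfrac1p], W(k)[\tfrac1p]) = 0$ (note $W(k)[\tfrac1p]$ is still of Tor-dimension $\le 1$, and a free module has no higher Tor). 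Consequently $H^i_{\Ainf}(\fX)\otimes W(k)$ and $H^i_{\textup{crys}}(\fX_k/W(k))$ have the same rank, and if the latter is torsion-free then the injection realizes $H^i_{\Ainf}(\fX)\otimes W(k)$ as a submodule of a torsion-free module, hence it is torsion-free. This gives the ``in particular'' clause directly.

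The main obstacle is the careful verification that $\ker(\vartheta: \Ainf \to W(k))$ really is generated by a regular sequence of length $1$ (or at least has finite flat dimension $1$), so that the spectral sequence collapses to a two-column exact sequence; this requires unwinding the Witt-vector description of $\vartheta$ and using that $\mO_C^\flat$ is a non-discrete valuation ring, and is the only step that is not formal manipulation of spectral sequences. A secondary, purely bookkeeping point is to make sure the identification $R\Gamma_{\Ainf}(\fX)\otimes^\L_\vartheta W(k) \cong R\Gamma_{\textup{crys}}(\fX_k/W(k))$ of Corollary \ref{cor:crystalline_comparison} is compatible with the edge maps of the spectral sequence, i.e.\ that the left edge map $H^i_{\Ainf}(\fX)\otimes W(k) \to H^i(R\Gamma_{\Ainf}(\fX)\otimes^\L W(k))$ is the natural reduction map — but this is automatic from the construction of the universal-coefficient spectral sequence.
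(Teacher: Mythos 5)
Your spectral-sequence setup is the same as the paper's, but there is a genuine gap at the decisive step: the claim that $W(k)$ has Tor-dimension $1$ over $\Ainf$ because $\ker(\vartheta)$ is ``generated by a regular sequence of length $1$'' is false, and with it the assertion that the hypothesis on $H^{k}_{\Ainf}(\fX)[\frac1p]$ for $k\ge i+1$ is not needed for the shape of the sequence. The kernel of $\mO_C^\flat \to k$ is the maximal ideal $\fm^\flat$ of a rank-one \emph{non-discrete} valuation ring, which is not principal and not even finitely generated; correspondingly $\ker(\vartheta)=W(\fm^\flat)$ is not principal. Your factorization $W(\mO_C^\flat)\to W(\mO_C^\flat/(p^\flat))\to W(k)$ does not help: $\mO_C^\flat/(p^\flat)\cong \mO_C/p$ is far from $k$ (one still has to kill the entire maximal ideal), and even the kernel of the first map is $W((p^\flat))$, which is strictly larger than the ideal generated by $[p^\flat]$. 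So the vanishing $\textup{Tor}^{\Ainf}_a(H^b_{\Ainf}(\fX),W(k))=0$ for $a\ge 2$, which you need both to kill the terms $\textup{Tor}_a(H^{i+a}_{\Ainf}(\fX),W(k))$ in total degree $i$ and to rule out higher differentials into $E^{0,i}$ and $E^{-1,i+1}$, is not established by your argument and is not available for free over $\Ainf$ (a non-coherent ring over which such Tor computations are genuinely delicate).

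This is exactly where the paper's hypothesis enters: the proof invokes Lemma A.5 of \cite{Morrow}, which says that for a finitely presented $\Ainf$-module $M$ with $M[\frac1p]$ finite free over $\Ainf[\frac1p]$ one has $\textup{Tor}^{\Ainf}_a(M,W(k))=0$ for $a>1$; applying this to $H^b_{\Ainf}(\fX)$ for $b>i$ (these are the modules covered by the lemma's assumption, and by the inductive setup of Theorem \ref{thm:valuation_in_BKF} they are finitely presented) collapses the spectral sequence $E_2^{-a,b}=\textup{Tor}^{\Ainf}_a(H^b_{\Ainf}(\fX),W(k))\Rightarrow H^{b-a}_{\crys}(\fX_k/W(k))$ to the stated two-term exact sequence. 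So the hypothesis is not an afterthought used to make the $\textup{Tor}_1$ term small, as you suggest; it is what makes the degeneration argument work at all. Your use of Corollary \ref{cor:crystalline_comparison} to identify the abutment, and the ``in particular'' clause (a submodule of a torsion-free module is torsion-free), are fine, but the proof needs to be repaired by replacing the Tor-dimension claim with the structure theory of finitely presented $\Ainf$-modules with free generic fiber (Morrow's Lemma A.5, or the corresponding results in Section 4.2 of \cite{BMS}).
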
 

\begin{remark} This lemma is used in the proof of Theorem \ref{thm:valuation_in_BKF}. Of course, once that theorem is proven, the assumption here is no longer necessary. 
\end{remark}
\begin{proof} 
By the crystalline comparison (the derived version, see Corollary \ref{cor:crystalline_comparison}), the spectral sequence for Tor becomes $E_2^{-a, b} = \textup{Tor}^{\Ainf}_a(H^b_{\Ainf}(\fX), W(k)) \so H^{b-a}_{\crys} (\fX_k/W(k)).$ Now apply Lemma A.5 in \cite{Morrow}, which implies that 
$ \textup{Tor}^{\Ainf}_a(H^b_{\Ainf}(\fX), W(k)) = 0$ for all $b > i, a > 1$. 
\end{proof} 

The final ingredient we need is the following lemma of Morrow (which replaces the role of Lemma 4.19 of \cite{BMS}). 

\begin{lemma}[Morrow] \label{lemma:Morrow_criterion_for_free}
 Let $M$ be a finitely presented $\Ainf$-module equipped with a $\varphi$-semilinear endomorphism, which becomes an isomorphism upon inverting $\xi$. Assume that $M[\frac{1}{p\mu}]$ is a finite free $\Ainf [\frac{1}{p \mu}]$-module of the same rank of $M \otimes_{\Ainf} W(k)$. Then $M[\frac{1}{p}]$ is finite free over $\Ainf[\frac{1}{p}]$. 
\end{lemma}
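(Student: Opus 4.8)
The plan is to exploit the structure theory of finitely generated modules over $\Ainf$, together with the known behavior of Breuil--Kisin--Fargues-type modules. First I would recall that $\Ainf = W(\mO_C^\flat)$ is a coherent ring of Krull dimension $2$ whose only height-$1$ primes relevant here are $(p)$, $(\xi)$ (equivalently $(\varphi(\xi))$, up to Frobenius), and $(\xi^\flat)$-type primes coming from $\mO_C^\flat$; in particular $\Ainf[\frac 1p]$ is a two-dimensional regular-like ring in which $\xi$ and $\mu$ cut out the exceptional loci. Writing $M$ as an extension of a torsion-free module by its torsion submodule $M_{\mathrm{tor}}$, I would first argue that after inverting $p$ the torsion of $M[\frac 1p]$ is supported on the closed point $V(p,\mu)$ (any other associated prime would survive one of the localizations where $M$ is assumed free), so $M_{\mathrm{tor}}[\frac 1p]$ is a finite-length module over $\Ainf[\frac 1p]$ supported at $(p,\mu)$; the hypothesis that $M[\frac{1}{p\mu}]$ is free then says precisely $M_{\mathrm{tor}}[\frac 1p]$ is killed by a power of $\mu$.

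The key step is to use the $\varphi$-semilinear endomorphism to kill this torsion. Since $\varphi$ becomes an isomorphism after inverting $\xi$, and $\varphi(\mu) = \varphi(\xi)\cdot\mu$ differs from $\mu$ by a unit times $\xi$ (more precisely $\varphi(\mu)/\mu = \varphi(\xi)$ and $\mu/\varphi^{-1}(\mu) = \xi$), iterating $\varphi$ moves the $\mu$-torsion into higher and higher powers of $\xi$; but $M[\frac 1p]$ is finitely presented, hence its torsion submodule is finitely generated and bounded, so a $\varphi$-stable finite-length submodule supported at the closed point must be zero. This is essentially the argument of Lemma 4.19 in \cite{BMS} (and the cited Lemma in \cite{Morrow}): one shows $M_{\mathrm{tor}}[\frac 1p] = 0$, i.e.\ $M[\frac 1p]$ is torsion-free and finitely presented over $\Ainf[\frac 1p]$. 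I would then invoke that a finitely presented torsion-free module over $\Ainf[\frac 1p]$ which is moreover free away from the single closed point $V(p,\mu)$ and free away from $V(\xi)$ is automatically free: such a module is reflexive, hence (by the two-dimensionality and the fact that it is locally free in codimension $\le 1$) locally free, and $\Ainf[\frac 1p]$ has trivial Picard group on the relevant locus, so it is globally free. The rank is forced to equal $\mathrm{rk}_{W(k)}(M\otimes_{\Ainf} W(k))$ by hypothesis, matching what one gets after specializing along $\vartheta$.

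The main obstacle I anticipate is the passage from ``locally free in codimension one'' to ``free'': one must handle the two exceptional divisors $V(\xi)$ and $V(p)$ and the closed point carefully, and verify there is no nontrivial line bundle. Concretely, the cleanest route is to show $M[\frac 1p]$ is reflexive (this follows once $M$ is torsion-free and finitely presented, using that $\Ainf$ is coherent and normal enough), then use that over the punctured spectrum $\mathrm{Spec}\,\Ainf[\frac 1p] \setminus V(p,\mu)$ the module is locally free of constant rank with trivial determinant (determinants of BKF-modules are trivialized by the $\varphi$-structure, as in \cite{BMS}), and finally extend the trivialization across the closed point by Hartogs / reflexivity. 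An alternative, and perhaps safer, route is to directly mimic the proof in \cite{Morrow}: reduce modulo $\xi$ and modulo $p$ to control generators via Nakayama, and lift a basis; here the hypothesis on ranks and the $\xi$-isomorphism of $\varphi$ are exactly what make the lift go through. I would present the argument along these lines, citing \cite{Morrow} for the technical module-theoretic input and emphasizing only the points where the $\varphi$-structure and the rank hypothesis enter.
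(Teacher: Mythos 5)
Your closing suggestion (cite \cite{Morrow} for the module-theoretic input) is in fact exactly what the paper does: its entire proof is the citation ``This is Lemma A.4 of \cite{Morrow}'', with no independent argument. The substantive sketch you give in place of that citation, however, has genuine gaps. First, the ring-theoretic premises are off: $\Ainf$ is not a two-dimensional ring (its Krull dimension is known to exceed $2$), and coherence is not something one can lean on; this is precisely why \cite{BMS} and \cite{Morrow} restrict to finitely presented modules and argue with a small, explicitly chosen collection of primes rather than with reflexivity, ``locally free in codimension one'', or Picard-group arguments over $\Ainf[\frac{1}{p}]$. Those steps of your sketch (torsion-free and free away from a point $\Rightarrow$ reflexive $\Rightarrow$ locally free $\Rightarrow$ free) import regular two-dimensional noetherian intuition that is not justified here. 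Second, and more seriously, the support analysis behind your torsion-killing step is wrong: after inverting $p$ there is no point ``$V(p,\mu)$'', the torsion of $M[\frac{1}{p}]$ is supported on $V(\mu)\subset \spec \Ainf[\frac{1}{p}]$, which is neither a single closed point nor of finite length, and it contains the prime $\ker(\vartheta)[\frac{1}{p}]$ cutting out $W(k)[\frac{1}{p}]$. That prime is $\varphi$-invariant, so the mechanism ``$\varphi$ pushes the torsion into ever higher powers of $\xi$, hence a bounded $\varphi$-stable torsion module dies'' fails exactly there: $\varphi$-stability alone does not exclude a torsion piece concentrated at the crystalline point.

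This is precisely where the rank hypothesis $\textup{rk}\, M[\frac{1}{p\mu}] = \textup{rk}\, (M\otimes_{\Ainf} W(k))$ has to enter: it is the assumption that rules out extra rank or torsion sitting over $\ker\vartheta$ (without it the statement is false --- modules of the shape $\Ainf \oplus \Ainf/\mu$-type have $M[\frac{1}{p\mu}]$ free but $M[\frac{1}{p}]$ not free, and they are detected only by the jump in rank after applying $\vartheta$, since $\vartheta(\mu)=0$). In your sketch the rank equality is invoked only at the very end, to ``force the rank'' of a module already claimed to be free, so the one hypothesis that carries the load is never actually used. To repair the argument you would need to redo Morrow's analysis, making the rank comparison at $W(k)$ kill the contribution at the $\varphi$-fixed prime before any freeness conclusion; otherwise the honest option is the paper's: quote Lemma A.4 of \cite{Morrow} and move on.
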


\begin{proof} 
This is Lemma A.4 of \cite{Morrow}. 
\end{proof} 
 
Now we are ready to prove Theorem \ref{thm:valuation_in_BKF} (using the method of \cite{Morrow}). 

\begin{proof}[Proof of Theorem \ref{thm:valuation_in_BKF}]
As in Remark \ref{remark:valuation_in_BKF} it remains to show that $H^i_{\Ainf} (\fX)$ is finitely presented and becomes free after inverting $p$, which we prove by a descending induction on $i$.  The claim is vacuously true for $i > 2  \dim \fX$.  By inductive hypothesis, each $H^k(\tau^{> i} R \Gamma_{\Ainf}(\fX) )$ is finitely presented over $\Ainf$ and becomes free after inverting $p$, so as a complex it is perfect over $\Ainf$, so the bounded complex $\tau^{> i} R \Gamma_{\Ainf}(\fX)$ is perfect, and hence $\tau^{\le i} R \Gamma_{\Ainf}(\fX)$ is perfect. From this it follows that the cohomology $H^i_{\fX}(\fX)$ is finitely presented as it is the cokernel of a map between two finite projective $\Ainf$-modules. To show that $H^i_{\Ainf} (\fX) [\frac{1}{p}]$ is finite free over $\Ainf[\frac{1}{p}]$, we apply Lemma \ref{lemma:Morrow_criterion_for_free}. Note that from the $\etale$ comparison (and note that $\Ainf[\frac{1}{\mu}]$ is $p$-torsion free) we have $H^i_{\Ainf}(\fX) [\frac{1}{\mu}] = H^i_{\ett}(X, \Z_p) \otimes_{\Z_p} \Ainf[\frac{1}{\mu}],$ thus it suffices to show that $\Z_p$-rank of $H^i_{\ett}(X, \Z_p)$ is the same the $W(k)$-rank of $H^i_{\crys} (\fX_k/W(k))$. This is precisely the statement of Corollary \ref{cor:same_rank}. 
\end{proof}  

\begin{remark} \label{remark:torsion}
As mentioned in the introduction, Theorem \ref{thm:valuation_in_BKF} together with the comparison theorems of $R\Gamma_{\Ainf}(\fX)$ is enough to deduce the following results of \cite{BMS} on torsion. First of all, for each $n \ge 1$, we have  $$\textup{length}_{W(k)} \big(H^i_{\textup{crys}}(\fX_k/W(k))_{\textup{tor}} /p^n \big) \ge \textup{length}_{\Z_p} \big(H^i_{\ett}(X, \Z_p)_{\textup{tor}} /p^n \big). $$ Moreover, $H^i_{\textup{crys}}(\fX_k/W(k))$ is $p$-torsion free if and only if $H^i_{\textup{dR}} (\fX/\mO_C)$ is $p$-torsion free, in which case, $H^i_{\Ainf}(\fX)$ is a finite free $\Ainf$-module. These claims follow from some basic properties of finitely presented $\Ainf$-modules, notably Lemma 4.15, 4.17 and 4.18 of \textit{loc.cit}, for which it is important to know that $H^i_{\Ainf}(\fX)[\frac{1}{p}]$ is free.  
\end{remark}  

\begin{remark} \label{remark:h_crys_on_cohomology}
Note that after inverting $p$, the map $h_{\crys}$ induces the following $\varphi$-equivariant map on cohomology groups (which we denote again by $h_{\crys}$) $$h_{\crys}:  H^i_{\textup{crys}}(\fX_{\mO_C/p}/\Acris) \otimes_{\Acris} \Bcrisp \ra H_{\Ainf}^i(\fX) \otimes \Bcrisp,$$ which is functorial on $\fX$. This directly follows from the freeness of $H^i_{\Ainf}(\fX)[\frac{1}{p}]$. 
\end{remark}

\subsection{The $\Bcris$ comparison } \label{ss:Bcris_comparison}

In this subsection we prove the $\Bcris$ comparison theorem (except for the part on filtration, which we leave to the last section of this paper). We adopt the following setup: let $K$ be a finite extension of $\Q_p$,\footnote{or more generally a discretely valued nonarchimedean extension}  with residue field $k_0$, and maximal unramified subfield $K_0= W(k_0)[\frac{1}{p}]$. Let $C = C_K$ be a $p$-completed algebraic closure of $K$, with residue field $k \cong \cl \F_p$. Let $\fX_{\mO_K}$ be a smooth proper formal scheme over $\mO_K$, with adic generic fiber $X_K$ and special fiber $\fX_0$. Let $\fX$ be the base change of $\fX_{\mO_K}$ to $\mO_C$, so $X = X_K \times \spa(C, \mO_C)$ and $\fX_k = \fX_0 \times_{\spec k_0} \spec k$. Note that there is a unique section from $k = \cl \F_p \ra \mO_C/p$ in this setup. By combining Lemma \ref{lemma:crys_BO} and Remark \ref{remark:h_crys_on_cohomology}, we get a $\textup{Gal}_K$-equivariant (by functoriality) and $\varphi$-compatible map 
$$\alpha_{\crys}: H^i_{\crys} (\fX_0/W(k_0)) \otimes \Bcrisp \longrightarrow H_{\Ainf}^i(\fX) \otimes \Bcrisp.$$
More precisely $\alpha_{\crys}$ is the following composition
\[
\begin{tikzcd}[column sep = 2em]
H^i_{\crys} (\fX_0/W(k_0)) \otimes_{W(k_0)} \Bcrisp \arrow[d, "\sim"{sloped, above}] \arrow[r, dashed, "\alpha_{\crys}"] & H^i_{\Ainf} (\fX) \otimes \Bcrisp \\
H^i_{\crys} (\fX_k/W(k)) \otimes_{W(k)} \Bcrisp \arrow{r}{\sim}[swap]{\textup{by }\ref{lemma:crys_BO}} &  H^i_{\textup{crys}}(\fX_{\mO_C/p}/\Acris) \otimes \Bcrisp \arrow[u, "h_{\crys}"]
\end{tikzcd}
\]
By construction, the base change of $\alpha_{\crys}$ along $\Bcrisp \ra K_0$ becomes a $\varphi$-equivariant isomorphism $H^i_{\crys} (\fX_0/W(k_0))[\frac{1}{p}] \isom H_{\Ainf}^i(\fX) \otimes K_0$, which is precisely the map on cohomology groups obtained from the quasi-isomorphism $h_W$ in Corollary \ref{cor:crystalline_comparison}. 

\begin{theorem}Retain the setup from above. Then after composing with the $\etale$ comparison, $\alpha_{\crys}$ induces a functorial $(\textup{Gal}_K, \varphi)$-equivariant isomorphism 
$$ \beta_{\crys}:  H^i_{\crys} (\fX_0/W(k_0)) \otimes \Bcris \isom H^i_{\ett}(X, \Z_p) \otimes  \Bcris.$$ In particular, $H^i_{\ett} (X, \Q_p)$ is crystalline.  
\end{theorem}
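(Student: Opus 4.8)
The plan is to upgrade the map $\alpha_{\crys}$ from a map of $\Bcrisp$-modules to an isomorphism of $\Bcris$-modules by inverting $t$, and then to identify the target with $p$-adic \'etale cohomology via the \'etale comparison. First I would invert $t$ (equivalently, pass to $\Bcris = \Bcrisp[\frac{1}{t}]$) in $\alpha_{\crys}$ to obtain
\[
\beta_{\crys}^{\circ}: H^i_{\crys}(\fX_0/W(k_0)) \otimes_{W(k_0)} \Bcris \longrightarrow H^i_{\Ainf}(\fX) \otimes_{\Ainf} \Bcris,
\]
which is still $(\textup{Gal}_K, \varphi)$-equivariant and functorial. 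To see this is an isomorphism, I would argue that both sides are finite free $\Bcris$-modules of the same rank (using Corollary \ref{cor:same_rank}, Lemma \ref{lemma:crys_BO}, and Theorem \ref{thm:valuation_in_BKF}), so it suffices to check surjectivity or injectivity after a faithfully flat base change; the natural choice is to base change along $\Bcris \to \Bdr$, where by Lemma \ref{lemma:invalid} the map $h_{\crys} \otimes \Bdr$ is an isomorphism on the nose, and the remaining vertical maps in the defining diagram of $\alpha_{\crys}$ (the Berthelot--Ogus isomorphism of Lemma \ref{lemma:crys_BO}) are already isomorphisms after inverting $p$. Hence $\beta_{\crys}^{\circ}$ becomes an isomorphism after $\otimes_{\Bcris} \Bdr$, and since $\Bcris \to \Bdr$ is faithfully flat, $\beta_{\crys}^{\circ}$ is itself an isomorphism.

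Next I would bring in the \'etale comparison. Tensoring $R\Gamma_{\Ainf}(\fX)$ with $\Bcris$ and using $R\Gamma_{\Ainf}(\fX) \otimes_{\Ainf} \Ainf[\frac{1}{\mu}] \cong R\Gamma(X_{\ett}, \Z_p) \otimes_{\Z_p} \Ainf[\frac{1}{\mu}]$ together with $\Bcris = \Ainf[\frac{1}{\mu}]^{\wedge}[\frac{1}{t}]$-type identities (more precisely the standard fact that $t = \mu \cdot (\text{unit})$ in $\Acris$, so inverting $\mu$ and $p$-completing and inverting $t$ lands inside $\Bcris$), one gets a $(\textup{Gal}_K, \varphi)$-equivariant identification $H^i_{\Ainf}(\fX) \otimes_{\Ainf} \Bcris \cong H^i_{\ett}(X, \Z_p) \otimes_{\Z_p} \Bcris$; here I use that $H^i_{\Ainf}(\fX)[\frac{1}{p}]$ is finite free (Theorem \ref{thm:valuation_in_BKF}) so that tensoring commutes with taking cohomology. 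Composing with $\beta_{\crys}^{\circ}$ gives
\[
\beta_{\crys}: H^i_{\crys}(\fX_0/W(k_0)) \otimes_{W(k_0)} \Bcris \isom H^i_{\ett}(X, \Z_p) \otimes_{\Z_p} \Bcris,
\]
which is the desired functorial $(\textup{Gal}_K, \varphi)$-equivariant isomorphism.

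Finally, for the statement that $H^i_{\ett}(X, \Q_p)$ is crystalline, I would invoke the standard criterion: a $\textup{Gal}_K$-representation $V$ is crystalline if $\dim_{K_0}(V \otimes_{\Q_p} \Bcris)^{\textup{Gal}_K} = \dim_{\Q_p} V$. Applying $(-)^{\textup{Gal}_K}$ to $\beta_{\crys}$ after inverting $p$, and using $\Bcris^{\textup{Gal}_K} = K_0$ together with the fact that Galois acts trivially on $H^i_{\crys}(\fX_0/W(k_0))$ (it is the crystalline cohomology of a scheme over $k_0$, with no $C$-dependence), we see $(H^i_{\ett}(X, \Q_p) \otimes \Bcris)^{\textup{Gal}_K} \supseteq H^i_{\crys}(\fX_0/W(k_0)) \otimes_{W(k_0)} K_0$, which has $K_0$-dimension equal to $\dim_{\Q_p} H^i_{\ett}(X, \Q_p)$ by Corollary \ref{cor:same_rank}; the reverse inequality is the general bound $\dim_{K_0}(V \otimes \Bcris)^{\textup{Gal}_K} \le \dim_{\Q_p} V$, so equality holds and $V = H^i_{\ett}(X, \Q_p)$ is crystalline.

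The main obstacle I anticipate is the bookkeeping around faithful flatness and the precise relationship between the rings $\Ainf[\frac{1}{\mu}]$, $\Acris$, $\Bcrisp$, $\Bcris$, and $\Bdr$ --- in particular making sure that inverting $\mu$ versus inverting $t$ versus inverting $p$ are reconciled correctly so that the \'etale comparison (which naturally lives over $\Ainf[\frac{1}{\mu}]$) can be transported to a statement over $\Bcris$, and that the two sources of the identification of $H^i_{\Ainf}(\fX) \otimes \Bcris$ (crystalline via $\beta_{\crys}^{\circ}$ and \'etale via the \'etale comparison) are genuinely compatible as Galois- and Frobenius-modules. Everything else is a formal consequence of the finite freeness results already established.
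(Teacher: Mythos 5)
There is a genuine gap at the very first step: the descent of $\beta_{\crys}^{\circ}$ from $\Bdr$ to $\Bcris$. The extension $\Bcris \to \Bdr$ is \emph{not} faithfully flat: $\Bdr$ is a field while $\Bcris$ is not, so any nonzero maximal ideal of $\Bcris$ generates the unit ideal in $\Bdr$. Concretely, if $b \in \Bcris$ is any nonzero non-unit, multiplication by $b$ on $\Bcris$ becomes an isomorphism after $\otimes_{\Bcris}\Bdr$ but is not an isomorphism; equivalently, knowing that $\det(\beta_{\crys}^{\circ})$ becomes a unit in $\Bdr$ (which is what Lemma \ref{lemma:invalid} gives you) does not make it a unit in $\Bcris$. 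So the conclusion ``$\beta_{\crys}^{\circ}$ is itself an isomorphism'' is unjustified, and everything downstream (including the crystallinity argument, which as written invokes $\beta_{\crys}$) inherits this gap. If you want to prove the isomorphism over $\Bcris$ directly, you would need a genuinely different input --- e.g.\ a $\varphi$-module/weak admissibility argument in the style of Faltings or Tsuji showing that a $\varphi$-equivariant map of free $\Bcris$-modules which is an isomorphism over $\Bdr$ (and compatible with suitable lattices) is an isomorphism --- not flat descent. Note that the paper deliberately avoids ever claiming that $h_{\crys}$ or $\alpha_{\crys}$ is an isomorphism over $\Bcrisp$ or $\Bcris$.

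The paper's route is different and you should compare it with yours: it applies $(-)^{\textup{Gal}_K}$ to $\alpha_{\crys}$ itself and uses that the base change of $\alpha_{\crys}$ along $\Bcrisp \to K_0$ is the crystalline specialization isomorphism of Corollary \ref{cor:crystalline_comparison}. This makes the composite $H^i_{\crys}(\fX_0/W(k_0))[\frac{1}{p}] \to (H^i_{\Ainf}(\fX)\otimes \Bcrisp)^{\textup{Gal}_K} \to H^i_{\Ainf}(\fX)\otimes K_0$ an isomorphism, hence gives the injection producing ``enough Galois invariants'': $\dim_{K_0}\textup{D}_{\textup{cris}}(V) \ge \dim_{K_0} H^i_{\crys}(\fX_0/W(k_0))[\frac{1}{p}] = \dim_{\Q_p} V$ by Corollary \ref{cor:same_rank} and the \'etale comparison. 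This proves $V = H^i_{\ett}(X,\Q_p)$ is crystalline \emph{first}, identifies $\textup{D}_{\textup{cris}}(V)$ with $H^i_{\crys}(\fX_0/W(k_0))[\frac{1}{p}]$, and only then obtains $\beta_{\crys}$ as the resulting isomorphism after tensoring back up to $\Bcris$ --- the opposite logical order from your proposal. Your final paragraph (the dimension count for $\textup{D}_{\textup{cris}}$) is in exactly the right spirit, but to make it work you must obtain the injectivity of $H^i_{\crys}(\fX_0/W(k_0))\otimes K_0 \to (V\otimes\Bcris)^{\textup{Gal}_K}$ without presupposing $\beta_{\crys}$; composing with the specialization to $K_0$ as above is precisely how the paper does this. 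The transport of the \'etale comparison to $\Bcris$ (via $t=\mu\cdot(\text{unit})$ and freeness of $H^i_{\Ainf}(\fX)[\frac{1}{p}]$) in your second paragraph is fine and matches the paper's implicit use.
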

\begin{proof} 
We consider the map $\alpha_{\crys}$ constructed above ands take the $\textup{Gal}_K$-invariant subspaces, which leads to the following diagram of $K_0$-vector spaces
\footnote{Note that all terms in this  commutative square are $K_0$-vector spaces, as $K_0 \subset (\Bcrisp)^{\textup{Gal}_K} \subset (\Bcris)^{\textup{Gal}_K} = K_0$.} 
\[ \begin{tikzcd}[column sep = 2em, row sep = 1.5em]
H^i_{\textup{crys}}(\fX_{0}/W(k_0)) [\frac{1}{p}] \arrow[d] 
\\
\Big(H^i_{\textup{crys}}(\fX_{0}/W(k_0)) \otimes \Bcrisp \Big)^{\textup{Gal}_K} \arrow[r]  
\arrow[d] 
& (H_{\Ainf}^i(\fX) \otimes \Bcrisp )^{\textup{Gal}_K} \arrow[d] \\
H^i_{\textup{crys}}(\fX_{0}/W(k_0)) [\frac{1}{p}]  \arrow{r}{\alpha_{\crys}\otimes K_0}[swap]{\sim} 
&  H^i_{\Ainf} (\fX) \otimes K_0 
\end{tikzcd} \]
Here the commutative square is obtained from applying $\textup{Gal}_K$-invariance to the morphism $\alpha_{\crys}$ and its base change $\alpha_{\crys} \otimes K_0$ (the Galois group acts trivially on the latter).  
The composition of the left vertical map (induced by $W(k_0) \ra \Acris \ra W(k_0)$) is an isomorphism, therefore the following composition 
$$ H^i_{\textup{crys}}(\fX_{0}/W(k_0)) [\frac{1}{p}]  \longrightarrow (H_{\Ainf}^i(\fX) \otimes \Bcrisp )^{G_K} \longrightarrow H^i_{\Ainf} (\fX) \otimes K_0 $$
is an isomorphism. In particular, we have 
$$\dim_{K_0} (H_{\Ainf}^i(\fX) \otimes \Bcrisp )^{\textup{Gal}_K} \ge \dim_{K_0} H^i_{\textup{crys}}(\fX_{0}/W(k_0)) [\frac{1}{p}].$$
Now let $V = H^i_{\ett} (X, \Q_p)$, then $ \textup{D}_{\textup{cris}}(V) \cong (H_{\Ainf}^i(\fX) \otimes \Bcris )^{\textup{Gal}_K} $ by the $\etale$ comparison, therefore we conclude that 
$$\dim_{K_0} \textup{D}_{\textup{cris}}(V) \ge \dim_{K_0} (H_{\Ainf}^i(\fX) \otimes \Bcrisp )^{\textup{Gal}_K} \ge \dim_{\Q_p} V $$
where the last inequality follows from the previous inequality and Corollary \ref{cor:same_rank}. The theorem thus follows.  
\end{proof} 

\begin{remark}  \label{remark:enough_Galois_inv}
Let us rephrase the proof in words. To show that $V$ is crystalline we want to show that $\textup{D}_{\textup{cris}}(V)$ is large enough (namely there are enough Galois invariants). We know that the crystalline cohomology $H^i_{\crys}(\fX_k/W(k))$ has large enough rank by Corollary \ref{cor:same_rank}, so we need to relate the crystalline cohomology to $\textup{D}_{\textup{cris}}(V)$ (which we know \textit{a posteriori} are the same after inverting $p$). This is precisely provided by $h_{\crys}$ (and $\alpha_{\crys}$ on the cohomology groups). It does not seem possible to deduce the $\Bcris$ comparison directly from Corollary \ref{cor:same_rank}.
\end{remark}


\section{The infinitesimal nature of $\Bdrp$-cohomology}

In this section we give a reformulation of the $\Bdrp$-cohomology of \cite{BMS}. We then use it to study the filtration compatibility of the $\Bcris$ comparison, and to recover $H^i_{\Ainf} (\fX)$ from the generic fiber in certain restricted cases. 

\subsection{An infinitesimal site} In this subsection let $X$ be a smooth proper rigid analytic variety over $\spa (C, \mO_C)$.
\footnote{We view $X$ as an adic space. In fact, it is possible to work the slightly more general class of analytic adic spaces that are locally noetherian -- the former condition in particular implies that $X$ can be covered by the adic spectrum of Tate Huber pairs.}  Let $\Bdrp_{,m}$ be the complete Tate Huber ring $\Bdrp_{,m} := \Bdrp/\xi^m  = \Ainf[\frac{1}{p}]/\xi^m$ over $\Q_p$.

\begin{definition} 
Let $A$ be a Tate algebra over $C$. The indiscrete infinitesimal site $\textup{Inf}(A/\Bdrp_{,m})^{\textup{op}, \circ}$ of $A$ relative to $\Bdrp_{,m} \twoheadrightarrow C$ is defined as follows. The objects of $\textup{Inf}(A/\Bdrp_{,m})^{\textup{op}}$ are of the form 
\[ 
(B, J) =  \begin{tikzcd}[column sep = 1.5em, row sep = 1em]
\Bdrp_{,m} \arrow[d] \arrow[rr]  & & B  \arrow[d] \\
C \arrow[r] & A \arrow[r] & B/J 
\end{tikzcd}
\]
where $B \ra B/J$ is pro-nilpotent thickening. Here $B$ is a complete Huber ring, whose topology is compatible with $\Bdrp_{,m}$ (so in particular it is Tate with $p$ being a pseudo-uniformizer). The topology of the site $\textup{Inf}(A/\Bdrp_{,m})^{\textup{op}, \circ}$ is the indiscrete topology.
\end{definition} 

Similarly we define the site $\textup{Inf}(A/\Bdrp)^{\textup{op}, \circ}$, with the requirement that $B = \varprojlim B/\xi^m$ (as a topological ring), and $(B/\xi^m, J) \in \textup{Ob}(\textup{Inf}(A/\Bdrp_{,m})).$ $\textup{Inf}(A/\Bdrp)^{\textup{op}, \circ}$ (resp. $\textup{Inf}(A/\Bdrp_{,m})^{\textup{op}, \circ}$) is a ringed site with the structure sheaf  $\mO_{\textup{Inf}}$, 
which sends $(B, J) \mapsto B$. 

\begin{definition} Let $K$ be any complete nonarchimedean extension of $\Q_p$. A smooth Tate algebra $A$ over $K$ is \textup{petit} if there exists an $\etale$ map $K\gr{t_i} \ra A$ from a Tate polynomial with finitely many variables.  
\end{definition} 

For a petit Tate algebra $A$ over $C$,  the infinitesimal cohomology $R \Gamma_{\textup{inf}}(A/\Bdrp)$ of $A$ with $\Bdrp$ coefficients is defined as the cohomology of the structure sheaf  $\mO_{\textup{Inf}}$.\footnote{Equivalently, we may take the derived pushforward of $\mO_{\textup{Inf}}$ along the map of topoi 
$$\textup{Shv}(\textup{Inf}(A/\Bdrp)^{\textup{op}, \circ}) \ra \textup{Shv} (\textup{Aff}(\Bdrp)^{\textup{op}, \circ}) $$ and then take the (derived) global sections there. Here the map is induced by the obvious cocontinuous map from $\textup{Inf}(A/\Bdrp)$ to the big affine ``Zariski'' site $\textup{Aff}(\Bdrp)^{\textup{op}, \circ}$ with indiscrete topology, where objects there are simply ring homomorphisms $\Bdrp \ra S$.} 
To define the global infinitesimal cohomology $R \Gamma_{\textup{inf}}(X/\Bdrp)$ of $X$, we let $X_{\min, \ett}$ be the site generated by petit affinoid objects in the $\etale$ site $X_{\ett}$. Note that the petit affinoids form a basis for $X_{\ett}$, by Corollary 1.6.10 of \cite{Huber}. We then define  
$$R \Gamma_{\textup{inf}}(X/\Bdrp) := \lim_{\spa(A, A^\circ)} R \Gamma_{\textup{inf}}(A/\Bdrp) $$
where the derived inverse limit is taken over all affinoids $\textup{sp} A = \spa (A, A^\circ)$ in $X_{\min, \ett}$. Equivalently, we may regard $R \Gamma_{\textup{inf}}(A/\Bdrp)$ as a functor from $X_{\min, \ett}$ to $D(\Bdrp)$, which takes values in derived $\xi$-complete objects.\footnote{This is justified in the proof of the lemma below.} This turns out to be a sheaf on $X_{\ett}$ (by the comparison with de Rham cohomology below), and thus induces a sheaf on $X_{\ett}$, whose derived global section gives the global infinitesimal cohomology. 

\begin{remark} 
It should be possible to define a global version of the infinitesimal site (with the $\etale$ topology) and then define $R \Gamma_{\textup{inf}}(X/\Bdrp)$ directly using this site. However, for simplicity we prefer to work locally on affinoids, for which it often suffices to consider the indiscrete topology by the vanishing of higher cohomology for coherent 
sheaves on affinoids (in our simplified setup this is already ensured by Tate's acyclicity theorem). 
\end{remark}

\subsection{Comparison with de Rham cohomology}

\begin{lemma} There is a quasi-isomorphism (obtained by base change) 
$$ R \Gamma_{\textup{inf}}(A/\Bdrp) \otimes^\L_{\Bdrp} C  \isom R \Gamma_{\textup{inf}}(A/C).$$
Note that $\textup{Inf}(A/\Bdrp_{,1})^{\textup{op}, \circ} = \textup{Inf}(A/C)^{\textup{op}, \circ}$. 
\end{lemma}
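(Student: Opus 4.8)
The plan is to compute both sides by explicit cosimplicial rings --- \v{C}ech--Alexander complexes attached to a $\Bdrp$-embedding --- and then to observe that the base change $-\otimes^\L_{\Bdrp}C = -\otimes^\L_{\Bdrp}\Bdrp/\xi$, being computed by the two-term complex $[\,\cdot\xrightarrow{\,\xi\,}\cdot\,]$ (equivalently, the shift of a finite limit), commutes with the relevant totalization. So everything reduces to producing, for the fixed petit Tate algebra $A$ over $C$, a cosimplicial $\Bdrp$-algebra $D^\bullet$ that computes $R\Gamma_{\textup{inf}}(A/\Bdrp)$, is degreewise flat over $\Bdrp$, and whose reduction $D^\bullet/\xi$ computes $R\Gamma_{\textup{inf}}(A/C)$.

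To build $D^\bullet$: since $A$ is petit, fix an \'etale map $C\langle\underline t\rangle\to A$ and let $\widetilde A$ be its unique formally \'etale, $\xi$-adically complete, $\Bdrp$-flat lift over $\Bdrp\langle\underline t\rangle$, so that $\widetilde A/\xi = A$ and $(\widetilde A,\xi\widetilde A)$ is an object of $\textup{Inf}(A/\Bdrp)^{\textup{op},\circ}$. Put $R_n := \widetilde A^{\widehat\otimes_{\Bdrp}(n+1)}$ and let $D^n$ be the completion of $R_n$ along $J_n := \ker(R_n\twoheadrightarrow A)$; since $\Bdrp$ is a $\Q$-algebra there are no divided powers to adjoin, so $D^n = \varprojlim_k R_n/J_n^k$, and the $(D^n,\widehat{J}_n)$ are again objects of $\textup{Inf}(A/\Bdrp)^{\textup{op},\circ}$. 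The standard \v{C}ech--Alexander argument --- formal smoothness of $\widetilde A$ over $\Bdrp$ lets one lift morphisms along the pro-nilpotent thickenings that are the objects of the indiscrete site, so $D^\bullet$ is a resolution of $\mO_{\textup{Inf}}$ --- then gives $R\Gamma_{\textup{inf}}(A/\Bdrp)\simeq\mathrm{Tot}(D^\bullet)$. The same argument over $\Bdrp_{,1}=C$ gives $R\Gamma_{\textup{inf}}(A/C)\simeq\mathrm{Tot}(\bar D^\bullet)$, where $\bar D^n$ is the $\bar J_n$-adic completion of $\bar R_n := A^{\widehat\otimes_C(n+1)}$; here one uses the identification $\textup{Inf}(A/\Bdrp_{,1})^{\textup{op},\circ}=\textup{Inf}(A/C)^{\textup{op},\circ}$ noted in the statement.

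Next I would verify the two properties of $D^\bullet$. First, each $D^n$ is $\xi$-torsion free, hence $\Bdrp$-flat ($\Bdrp$ is a discrete valuation ring with uniformizer $\xi$): if $\xi x = 0$ in $D^n=\varprojlim_k R_n/J_n^k$, then $\xi x_k\in J_n^k$ for all $k$, and the colon-ideal identity $(J_n^k :_{R_n}\xi)\subseteq J_n^{k-1}$ --- valid because in suitable \'etale-local coordinates $J_n$ is generated by $\xi$ together with a regular sequence --- forces $x_k$ to have zero image in $R_n/J_n^{k-1}$, i.e. $x=0$. Second, $D^n/\xi D^n\cong\bar D^n$: the transition maps $R_n/J_n^{k+1}\to R_n/J_n^{k}$ are surjective, so reduction mod $\xi$ commutes with the inverse limit, and $\widetilde A^{\widehat\otimes_{\Bdrp}(n+1)}/\xi = A^{\widehat\otimes_C(n+1)}$ with the $J_n^k$ reducing to the $\bar J_n^{\,k}$. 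Granting these, $-\otimes^\L_{\Bdrp}C$ commutes with $\mathrm{Tot}$ and, applied to the flat $D^n$, is just reduction mod $\xi$, so
\[ R\Gamma_{\textup{inf}}(A/\Bdrp)\otimes^\L_{\Bdrp}C \;\simeq\; \mathrm{Tot}\big(D^\bullet\otimes^\L_{\Bdrp}C\big) \;\simeq\; \mathrm{Tot}(D^\bullet/\xi) \;\simeq\; \mathrm{Tot}(\bar D^\bullet) \;\simeq\; R\Gamma_{\textup{inf}}(A/C), \]
and functoriality in $A$ makes this compatible with the limits defining the global infinitesimal cohomologies.

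The main obstacle is the input underpinning the second paragraph: that the \v{C}ech--Alexander complex $D^\bullet$ genuinely computes $R\Gamma_{\textup{inf}}(A/\Bdrp)$ in this Huber-ring context. One must check that the $D^n$ are again complete Huber rings whose topology is compatible with $\Bdrp$ (so that they are legitimate objects of the site), that the completed tensor powers over $\Bdrp$ behave as expected, and that the ``weakly initial object / resolution'' argument --- familiar from the crystalline and de Rham comparison, e.g.\ \cite{BdJ} --- carries over to the indiscrete topology here. A slightly cleaner alternative would be to first prove the $\Bdrp$-Poincar\'e lemma $R\Gamma_{\textup{inf}}(A/\Bdrp)\simeq\widehat\Omega^\bullet_{\widetilde A/\Bdrp}$ and then reduce mod $\xi$ using $\widehat\Omega^\bullet_{\widetilde A/\Bdrp}\otimes^\L_{\Bdrp}C = \widehat\Omega^\bullet_{A/C} = R\Gamma_{\textup{dR}}(A/C)$; but that essentially presupposes the comparison of the next proposition, so I would avoid it here. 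Everything past the two verifications above is formal.
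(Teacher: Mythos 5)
Your proof has the same skeleton as the paper's: compute both sides by a \v{C}ech--Alexander cosimplicial algebra attached to a weakly final object of $\textup{Inf}(A/\Bdrp)^{\textup{op},\circ}$, check its terms are flat over the discrete valuation ring $\Bdrp$, identify the reduction mod $\xi$ with the corresponding cosimplicial algebra over $C$, and use that $-\otimes^\L_{\Bdrp}C$ commutes with the totalization. The only real difference is the choice of resolution: the paper lifts a surjection $C\gr{X_i}\twoheadrightarrow A$ (finitely many variables, as $A$ is a classical Tate algebra) to $\Bdrp\gr{X_i}$ and completes along the kernel $J(n)$, whereas you take the formally \'etale lift $\widetilde A$ over $\Bdrp\gr{\underline t}$ and complete its tensor powers along the augmentation ideals. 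The paper's choice buys flatness essentially for free ($\Bdrp\gr{X_i}^{\widehat\otimes(n+1)}$ is noetherian and $\xi$-torsion free, $J(n)$ is finitely generated, so the adic completions are $\Bdrp$-flat), while yours trades that for the weak finality of $\widetilde A$ (a lifting along pro-nilpotent thickenings via topological formal smoothness, of the same kind the paper uses in Proposition \ref{prop:compare_C_inf_coh_with_dR} and Construction \ref{con:h_dR}) and a hands-on torsion computation. On that last point your justification is imprecise: for $n\ge 1$ the ideal $J_n=\ker\big(\widetilde A^{\widehat\otimes(n+1)}\to A\big)$ is \emph{not} generated by $\xi$ together with a regular sequence, because $\widetilde A$ is \'etale over $\Bdrp\gr{\underline t}$ and the diagonal ideal therefore contains idempotent-type components; to get $(J_n^k:\xi)\subseteq J_n^{k-1}$ you should first split off the non-diagonal factors (or argue on the associated graded, or simply invoke noetherianness of $\widetilde A^{\widehat\otimes(n+1)}$ and flatness of adic completion, which is in effect what the paper does). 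This is fixable, and the remaining steps --- surjective transition maps giving $D^n/\xi\cong \bar D^n$, and commuting $\mathrm{Tot}$ with the two-term complex computing $-\otimes^\L_{\Bdrp}C$ --- match the paper's argument.
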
 

\begin{proof} 
Choose a surjection $C\gr{X_i} \twoheadrightarrow A$ from a Tate polynomial ring to $A$. Let $\Bdrp \gr{X_i} = \varprojlim \Bdrp_{,m}\gr{X_i},$ where $\Bdrp_{,m} \gr{X_i} = \big(\Ainf/\xi^m \gr{X_i} \big) [\frac{1}{p}]$, and let $\Sigma_{\textup{dR}}$ be the classical
completion of  $\Bdrp\gr{X_i} $ with respect to the kernel 
$$J = J(0) := \ker(\Bdrp \gr{X_i} \twoheadrightarrow A).$$  $\Sigma_{\textup{dR}}$ is a weakly terminal object in $\textup{Inf}(A/\Bdrp)^{\textup{op}, \circ}$. Its $(n+1)$-folded product $\Sigma_{\textup{dR}}(n)$ is given by the completion of $\varprojlim\Bdrp_{,m} \gr{X_i}^{\widehat \otimes (n+1)}$ with respect to $J(n)$ (defined similarly as $J (0)$). The 
cosimplicial complex $\Sigma_{\textup{dR}}(\bullet)$ then computes the cohomology $R \Gamma_{\textup{inf}}(A/\Bdrp)$. 

Note that $\Bdrp \gr{X_i}$ is $\xi$-torsion free, and $J$ is finitely generated since $C\gr{X_i}$ is noetherian, hence $\Sigma_{\textup{dR}}$ (resp. $\Sigma_{\textup{dR}}(n)$) is flat over $\Bdrp$.\footnote{In particular $\Sigma_{\textup{dR}}$ is $\xi$-complete and $\xi$-torsion free. This also shows that $R \Gamma_{\textup{inf}} (A/\Bdrp)$ is derived $\xi$-adically complete.}  One then checks that the derived quotient $\Sigma_{\textup{dR}} (\bullet)/\xi$ is isomorphic to $\Sigma_C (\bullet)$, where $\Sigma_C(n)$ is the $J(n)$-adic completion of $C \gr{X_i}^{\widehat \otimes (n+1)}$. This agrees with the completion with respect to $\ker\big(C \gr{X_i}^{\widehat \otimes (n+1)} \twoheadrightarrow A \big)$, so the complex $\Sigma_C(\bullet)$ computes $R \Gamma_{\textup{inf}}(A/C)$. The lemma hence follows. 
\end{proof} 

The follow Proposition compares the infinitesimal cohomology with de Rham cohomology. The proof is similar to the proof of \cite{BdJ}. 
 
\begin{proposition} \label{prop:compare_C_inf_coh_with_dR} Let $A$ be a petit Tate algebra over $C$. There is a natural quasi-isomorphism between
$$R \Gamma_{\textup{inf}}(A/C) \cong \Omega_{A/C}^{\bullet}.$$ In particular, $R\Gamma_{\textup{inf}}(X/C) \cong R \Gamma_{\textup{dR}} (X/C)$, and 
therefore we have 
$$ R \Gamma_{\textup{inf}}(X/\Bdrp) \otimes^\L_{\Bdrp} C \isom  R \Gamma_{\textup{dR}} (X/C). $$
\end{proposition}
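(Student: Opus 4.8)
The plan is to follow the classical Grothendieck-style argument comparing infinitesimal and de Rham cohomology, in the form used by Bhatt--de Jong, adapted to the affinoid/adic setting. First I would choose a surjection $C\gr{X_i} \twoheadrightarrow A$ from a Tate polynomial ring in finitely many variables (possible since $A$ is petit, hence in particular topologically of finite type), with kernel $J = J(0)$. Let $\Sigma = \Sigma_C$ be the $J$-adic completion of $C\gr{X_i}$; by the previous lemma's construction $\Sigma$ is a weakly terminal object of $\textup{Inf}(A/C)^{\textup{op},\circ}$, and the Cech--Alexander cosimplicial ring $\Sigma(\bullet)$ (where $\Sigma(n)$ is the completion of $C\gr{X_i}^{\widehat\otimes(n+1)}$ along the kernel of the multiplication map to $A$) computes $R\Gamma_{\textup{inf}}(A/C)$.

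Next, the key computational input: I would show that $\Sigma(\bullet)$ is quasi-isomorphic (as a cosimplicial $A$-module, compatibly with the augmentation) to the de Rham complex $\Omega^\bullet_{\Sigma/C}$ of the divided-power-free thickening — but here, since we are in characteristic zero, no divided powers are needed and $\Sigma$ is simply a formally smooth (pro-finite-type) $C$-algebra with $\Sigma/J^{[n]} \to A$ an honest nilpotent thickening. Concretely, one uses the standard Poincare-lemma-type argument: for the polynomial-ring case $A = C\gr{X_i}$ itself the cosimplicial object $\Sigma(\bullet)$ is the Cech nerve of a formally étale cover and one writes down an explicit contracting homotopy identifying it with $\Omega^\bullet_{A/C}$ sitting in cohomological degree pattern concentrated appropriately; then one reduces the general petit $A$ to this case via the chosen presentation, using that $\Omega^\bullet_{\Sigma/C} \otimes_\Sigma A \simeq \Omega^\bullet_{A/C}$ (formal smoothness of $\Sigma$ and the Jacobian criterion for the étale map $C\gr{t_i}\to A$) together with the fact that de Rham cohomology of the nilpotent thickening agrees with that of $A$ in characteristic zero (filtered by powers of $J$, each graded piece is contractible). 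This is exactly the mechanism of \cite{BdJ}, and I would cite that argument rather than redo it, noting only the modifications needed for the adic topology (Tate's acyclicity, so that completed tensor products behave and the relevant coherent cohomology on affinoids vanishes).

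From the local statement $R\Gamma_{\textup{inf}}(A/C) \cong \Omega^\bullet_{A/C}$ one immediately gets, by taking the limit over the basis $X_{\min,\ett}$ of petit affinoids and using that both sides are sheaves on $X_{\ett}$ (the left side because $R\Gamma_{\textup{inf}}$ was just shown to agree with $R\Gamma_{\textup{dR}}$, which satisfies étale descent; the right side by definition of the de Rham complex as a Zariski/étale sheaf), the global comparison $R\Gamma_{\textup{inf}}(X/C) \cong R\Gamma_{\textup{dR}}(X/C)$. Finally, combining this with the base-change quasi-isomorphism $R\Gamma_{\textup{inf}}(A/\Bdrp)\otimes^\L_{\Bdrp} C \cong R\Gamma_{\textup{inf}}(A/C)$ from the preceding lemma — and taking the limit over the basis, which commutes with $\otimes^\L_{\Bdrp} C$ here because each $R\Gamma_{\textup{inf}}(A/\Bdrp)$ is a perfect, hence bounded, complex of $\Bdrp$-modules (or at least derived $\xi$-complete with $\xi$ a nonzerodivisor, so that the spectral sequence converges) — yields $R\Gamma_{\textup{inf}}(X/\Bdrp)\otimes^\L_{\Bdrp} C \cong R\Gamma_{\textup{dR}}(X/C)$.

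The main obstacle I anticipate is the careful bookkeeping in the quasi-isomorphism $\Sigma(\bullet) \simeq \Omega^\bullet_{A/C}$ in the adic/complete setting: one must check that the $J$-adic completions and completed tensor products do not disrupt the contracting homotopies of the Poincare lemma, and that passing from the polynomial case to a general petit $A$ (via an étale, not smooth-with-section, presentation) is legitimate — the étale map $C\gr{t_i}\to A$ gives a canonical identification of cotangent complexes but one should make sure the formal-smoothness lifting used to build the homotopy is available. A secondary subtlety is commuting the derived limit over $X_{\min,\ett}$ with $\otimes^\L_{\Bdrp}C$; this is where the perfectness/boundedness of the local infinitesimal complexes (ultimately coming from the de Rham comparison itself, so the argument is mildly self-referential but not circular — one first establishes the $C$-level statement unconditionally) is essential, and I would spell out that the de Rham complex of a petit, hence finite-dimensional smooth, algebra is a bounded complex of finite projective modules.
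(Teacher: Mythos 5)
Your proposal follows essentially the same route as the paper's proof: the \v{C}ech--Alexander cosimplicial ring $\Sigma(\bullet)$ attached to a Tate-polynomial presentation, the Bhatt--de Jong double-complex argument with the formal (char.~$0$, no divided powers) Poincar\'e lemma to get $R\Gamma_{\textup{inf}}(A/C)\simeq \Omega^\bullet_{\Sigma/C}$, and then the \'etale part of the petit presentation to produce a canonical section $A\to\Sigma$ along the pro-nilpotent thickening, identifying $\Sigma\cong A[\![Y_j-\beta_j]\!]$ and hence $\Omega^\bullet_{\Sigma/C}\simeq\Omega^\bullet_{A/C}$ (this lifting is exactly how the paper resolves the ``formal-smoothness lifting'' worry you flag; note also that $\Sigma(n)\cong\Sigma[\![t_i]\!]$ is a power-series thickening rather than a formally \'etale cover, but the contracting homotopy you invoke is precisely the formal Poincar\'e lemma used in the paper). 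One justification should be repaired, though: $R\Gamma_{\textup{inf}}(A/\Bdrp)$ for an affinoid $A$ is \emph{not} a perfect or cohomologically finite $\Bdrp$-complex --- already $H^1_{\textup{dR}}$ of the closed disc $C\gr{T}$ is infinite-dimensional over $C$ --- so you cannot appeal to perfectness of the local complexes to commute $\otimes^\L_{\Bdrp}C$ with the derived limit over petit affinoids. No such hypothesis is needed: $C=\Bdrp/\xi$ is a perfect $\Bdrp$-module, so $-\otimes^\L_{\Bdrp}C$ is the cofiber (equivalently, shifted fiber) of multiplication by $\xi$ and therefore commutes with arbitrary derived limits in the stable setting; with that substitution your argument for the final base-change statement is fine.
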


\begin{proof} 
As in the proof of the previous lemma, choose $C \gr{X_i} \twoheadrightarrow A$ and consider the complex $(\Sigma \ra \Sigma (1) \ra \Sigma(2) \ra \cdots)$ which computes $R \Gamma_{\textup{inf}}(A/C)$, here we use $\Sigma$ for $\Sigma_C$ to simplify notation. We will make a more refined choice of such a surjection later. Now consider the double complex
\[ 
\begin{tikzcd}[column sep = 1.5em, row sep = 1.2em]
\vdots &  \vdots &  \vdots &  \\
\Omega^2_{\Sigma} \arrow[r] \arrow[u]  & \Omega^2_{\Sigma(1)} \arrow[r] \arrow[u]   & \Omega^2_{\Sigma(2)} \arrow[r] \arrow[u]  & \cdots \\
\Omega^1_{\Sigma} \arrow[r] \arrow[u, "d"]   & \Omega^1_{\Sigma(1)} \arrow[r] \arrow[u, "d"]    & \Omega^1_{\Sigma(2)} \arrow[r] \arrow[u, "d"]    & \cdots   \\ 
\Sigma \arrow[r]  \arrow[u, "d"]   & \Sigma(1) \arrow[r] \arrow[u, "d"]   &  \Sigma(2) \arrow[r]  \arrow[u, "d"]   & \cdots
\end{tikzcd}
\]
where $\Omega^i_{\Sigma(n)}$ denotes the $J(n)$-adic completed differentials. For $i \ge 1$, each row $\Omega^i_\Sigma \ra \Omega_{\Sigma(1)}^i \ra \Omega_{\Sigma(2)}^i \ra \cdots$ is homotopic to $0$ (by the \cite{SP} 07L9, and note that being homotopic to $0$ is preserved under term-wise $p$-completions and $J(\bullet)$-completions). Moreover, for each of the maps $\Sigma \ra \Sigma(n)$, the induced morphism $\Omega^\bullet_{\Sigma} \ra \Omega^\bullet_{\Sigma(n)}$ is a quasi-isomorphism. This follows from the same proof of Lemma 2.13 of \cite{BdJ}, using the formal version of the Poincar\'e lemma (instead of the PD Poincar\'e lemma), and using the observation that each $\Sigma \ra \Sigma(n)$ induces an isomorphism between $\Sigma(n)$ and a formal power series $\Sigma [\![t_i]\!]$ over $\Sigma$. Now by comparing the two filtration spectral sequences associated to the double complex above, we have a natural quasi-isomorphism $R \Gamma_{\textup{inf}}(A/C) \cong \Omega_{\Sigma/C}^{\bullet}$. 

It remains to compare this with $\Omega_{A/C}^\bullet$. For this we make the choice of the map $C\gr{X_i} \twoheadrightarrow A$ as follows. By definition (of petit Tate algebras) there exists an $\etale$ map $f: C \gr{T_i} \ra A$, we enlarge the source by adding (possibly infinitely many) variables $Y_j$ to obtain a surjection $g$ as below. 
\[
\begin{tikzcd}[row sep = 1.5em]
C \gr{T_i}  \arrow[r] \arrow[d, swap, "f"] & C \gr{T_i, Y_j} \arrow[ld, two heads, "g"] \\
A
\end{tikzcd}
\]
Write $\beta_j = g(Y_j) \in A$. Let $\Sigma$ be the $\ker(g)$-completion of $C\gr{T_i, Y_j}$. Since $\Sigma$ is a pro-nilpotent thickening of $A$, and $C \gr{T_i}$ is $\etale$ (in the sense of \cite{Huber} 1.5.1), the map $C\gr{T_i} \ra \Sigma$ canonically lifts to a section $A \ra \Sigma$. This induces a map $A[Y_j] \ra \Sigma$, 
and thus induces a map $g: A [\![Y_j- \beta_j]\!] \ra \Sigma$ by passing to completions. Now these maps fit into the following commutative diagram (with only solid arrows) 
\[
\begin{tikzcd}[row sep = 1.5em]
 C\gr{X_i, Y_j} \arrow[d] \arrow[dr, "\sq f"]  \\
\Sigma \arrow[rd] \arrow[r, dashed] & A[\![Y_j - \beta_j]\!] \arrow[d] \\ 
& A 
\end{tikzcd} 
\]
where $\sq f$ is induced by $f$. Therefore by the universal property there exists a map $g': \Sigma \ra A[\![Y_j]\!]$ (the dashed arrow) such that $g \circ g'$ is the identity on $\Sigma$. Note that $g' \circ g $ is also identity by considering the image of $Y_j$, so we have $\Sigma \cong A[\![Y_j - \beta_j]\!]$, hence $\Omega_{\Sigma/C}^\bullet \isom \Omega_{A/C}^\bullet$. 
\end{proof}

\subsection{Comparison with $\Ainf$- and $\etale$ cohomology}
Next we return to the setup of the introduction, and compare the infinitesimal cohomology of $X$ with the crystalline cohomology over $\Acris$ (and hence the $\Ainf$-cohomology). We use this to obtain a $\Bdrp$-lattice in $H^i_{\ett}(X, \Z_p) \otimes \Bdr$.

\begin{lemma} \label{lemma:compare_inf_coh_with_abs_crys}
Let $\fX$ be a smooth formal scheme over $\spf \mO_C$. There is a functorial isomorphism $$b: R \Gamma_{\crys} (\fX_{\mO_C/p}/\Acris) \widehat \otimes^\L \Bdrp \isom R \Gamma_{\textup{inf}} (X/\Bdrp).$$
\end{lemma}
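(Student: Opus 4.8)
The plan is to construct the isomorphism $b$ by comparing both sides with a common de Rham-type object after base change modulo powers of $\xi$, reducing to the affine situation and then using a dévissage along the $\xi$-adic filtration. First I would work locally: choose an affine open $\spf R \subset \fX_{\ett}$ whose generic fiber $\spa(A, A^\circ)$ is a petit affinoid (such opens form a basis on both sides — the crystalline side by functoriality of $R\Gamma_{\crys}$ on $\fX_{\ett}$, and the infinitesimal side by definition of $X_{\min,\ett}$). So it suffices to produce a functorial isomorphism
\[
b_R: R\Gamma_{\crys}((R/p)/\Acris) \widehat\otimes^\L_{\Acris} \Bdrp \isom R\Gamma_{\textup{inf}}(A/\Bdrp)
\]
compatible with restriction maps, and then pass to the homotopy limit over such $\spf R$.

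The key steps, in order. (1) Both sides are derived $\xi$-complete: the left side because $R\Gamma_{\crys}((R/p)/\Acris)$ is a perfect $\Acris$-complex (as in the proof of Lemma~\ref{lemma:invalid}), hence so is its $\Bdrp$-base change; the right side by the footnote in the proof of the comparison-with-de-Rham lemma, where $\Sigma_{\textup{dR}}$ is shown to be $\xi$-torsion free and $\xi$-complete. (2) By derived Nakayama it therefore suffices to construct the map and check it is an isomorphism after $\otimes^\L \Bdrp/\xi = C$. (3) Modulo $\xi$ both sides become de Rham cohomology: the left side by the de Rham comparison (Theorem~\ref{thm:construction_of_h_cris}, or directly the crystalline–de Rham comparison, giving $R\Gamma_{\crys}((R/p)/\Acris)\otimes^\L_\Acris \mO_C/p \cong R\Gamma_{\textup{dR}}(R/p) \cong \Omega^\bullet_{R/C}\otimes C$ after further base change $\mO_C/p \to C$ — more precisely one should be careful and compare with $R\Gamma_{\textup{dR}}(\fX/\mO_C)\otimes^\L C$), and the right side by Proposition~\ref{prop:compare_C_inf_coh_with_dR}, which gives $R\Gamma_{\textup{inf}}(A/C) \cong \Omega^\bullet_{A/C}$. (4) To actually build the map $b_R$ (not just an abstract identification mod $\xi$), I would exhibit an explicit object of $\textup{Inf}(A/\Bdrp)^{\textup{op},\circ}$ receiving the crystalline cohomology, in the spirit of Construction~\ref{con:functorial_map_h_crys}: starting from a surjection $\Bdrp\gr{X_i} \twoheadrightarrow A$ and its $J$-adic completion $\Sigma_{\textup{dR}}$, one gets a cosimplicial ring $\Sigma_{\textup{dR}}(\bullet)$ computing $R\Gamma_{\textup{inf}}(A/\Bdrp)$; lifting the polynomial generators one maps the crystalline Čech–Alexander complex of $R/p$ over $\Acris$ into $\Sigma_{\textup{dR}}(\bullet)$, compatibly with the de Rham identifications on both sides after reducing mod $\xi$. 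Functoriality in $R$ follows from functoriality of all these constructions, and then the homotopy limit gives $b$.

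The main obstacle I expect is step (4): making the map $b_R$ genuinely functorial and $\Bdrp$-linear before knowing it is an isomorphism, since the infinitesimal site here lives over the Tate Huber ring $\Bdrp$ (characteristic $0$, no PD structure) while the crystalline site of $R/p$ lives over $\Acris$ (a PD base). Concretely, one must check that a crystalline thickening $D \twoheadrightarrow R/p$ with its PD structure maps into a pro-nilpotent thickening $\Sigma_{\textup{dR}} \twoheadrightarrow A$ over $\Bdrp$ in a way that does not depend (up to homotopy) on auxiliary choices of lifts of coordinates; this is essentially the argument that appears in Lemma~\ref{lemma:compatible_with_dR_bc}, transported from $\mO_C/p$ to the $\Bdrp_{,m}$ tower, and then one passes to the limit over $m$. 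Once the map is constructed, checking it is an isomorphism is painless by the $\xi$-complete derived Nakayama reduction of steps (1)--(3). Alternatively — and this may be the cleanest route — one can sidestep the explicit construction by invoking the identification $R\Gamma_{\Ainf}(\fX)\otimes^\L \Bdrp \isom R\Gamma_{\textup{inf}}(X/\Bdrp)$ (proved via the same $\xi$-adic dévissage comparing $A\Omega_R \otimes^\L \Bdrp/\xi \cong \L\Omega_{R/\mO_C}\otimes C$ with $R\Gamma_{\textup{inf}}(A/C)$) together with Lemma~\ref{lemma:invalid}'s isomorphism $R\Gamma_{\crys}(\fX_{\mO_C/p}/\Acris)\otimes^\L \Bdrp \isom R\Gamma_{\Ainf}(\fX)\otimes^\L \Bdrp$, composing the two to define $b$; but making that composite functorial at the level of the $X_{\min,\ett}$-site still requires essentially the same local bookkeeping.
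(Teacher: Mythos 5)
Your steps (1)--(3) are exactly the paper's verification step: both sides are derived $\xi$-complete, so one reduces mod $\xi$, where the map becomes $R\Gamma_{\textup{dR}}(\fX)[\frac{1}{p}] \isom R\Gamma_{\textup{dR}}(X)$ by the de Rham comparisons (Proposition \ref{prop:compare_C_inf_coh_with_dR} on the infinitesimal side). The genuine gap is your step (4): the actual construction of a functorial, choice-free map is the content of the lemma, and you leave it as an acknowledged ``obstacle'' rather than resolving it. The paper does not build $b$ through Čech--Alexander complexes and lifts of coordinates at all. It first uses the identification $R\Gamma_{\crys}(\fX_{\mO_C/p}/\Acris) \cong R\Gamma_{\crys}(\fX/\Acris)$, so that objects of the (big, indiscrete) crystalline site are PD-thickenings $B \twoheadrightarrow B/J$ of $R$-algebras; this step matters, because in your setup the thickened rings are $R/p$-algebras and there is no map $R/p \to A$, so a crystalline Čech--Alexander complex of $R/p$ does not map to the cosimplicial object $\Sigma_{\textup{dR}}(\bullet)$ compatibly with the augmentations. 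It then defines a functor $\mu: \textup{CRIS}(R/\Acris)^{\textup{op},\circ} \to \textup{Inf}(R[\frac{1}{p}]/\Bdrp)^{\textup{op},\circ}$ sending $(B \twoheadrightarrow B/J)$ to the completion of $\varprojlim_m (B\otimes_{\Acris}\Bdrp_{,m})$ along the kernel of its surjection onto $(B/J)[\frac{1}{p}]$; since $\mu$ preserves fibre products and equalizers, the pullback is exact and one gets a morphism of topoi, and $b$ is the resulting base-change map on cohomology of structure sheaves. This makes functoriality and $\Bdrp$-linearity automatic, with no choice of surjection $\Bdrp\gr{X_i}\twoheadrightarrow A$ and no homotopy-coherence bookkeeping; the issue you flag about transporting the argument of Lemma \ref{lemma:compatible_with_dR_bc} to the $\Bdrp_{,m}$-tower simply never arises.

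Your proposed ``cleanest route'' is also not available: the identification $R\Gamma_{\Ainf}(\fX)\otimes^\L\Bdrp \isom R\Gamma_{\textup{inf}}(X/\Bdrp)$ is Corollary \ref{cor:Bdrp_lattice}, which the paper deduces by composing $b$ with $(h_{\crys}\otimes\Bdrp)^{-1}$ from Lemma \ref{lemma:invalid}; using it to define $b$ is circular, and proving it independently ``by the same dévissage'' again presupposes a construction of a functorial map, which is the very point at issue. So the isomorphism check in your write-up is fine and agrees with the paper, but the construction of the map --- the heart of the lemma --- is missing.
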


\begin{proof} 
Note that  $ R \Gamma_{\crys} (\fX /\Acris) \cong R \Gamma_{\crys} (\fX_{\mO_C/p}/\Acris)$. 
It suffices to restrict to affine opens $\spf R$ in $\fX_{\ett}$, and construct a map $R\Gamma_{\crys} (R/\Acris) \ra R\Gamma_{\textup{inf}}(R[\frac{1}{p}]/\Bdrp)$. For this we consider the following continuous functor 
$$\mu: \textup{CRIS}(R/\Acris)^{\textup{op}, \circ} \ra \textup{Inf}(R[\frac{1}{p}]/\Bdrp)^{\textup{op}, \circ}$$ from the (big) indiscrete crystalline site to the indiscrete infinitesimal site, 
which sends $ (B \twoheadrightarrow B/J) \longmapsto (\sq B \twoheadrightarrow (B/J) [\frac{1}{p}]).$
Here $\sq B$ is the completion of $B' := \varprojlim (B \otimes_{\Acris} \Bdrp_{, m})$ with respect to $\ker (B' \twoheadrightarrow B/J[\frac{1}{p}])$. The functor $\mu$ preserves fibre products and equalizers (both exist in the big crystalline site), hence the pullback functor $\mu_s$ is exact \footnote{see Tag 00WX, 00X4 and 00X5 of \cite{SP}.}. 
Thus $\mu$ induces a map $\textup{Shv} (\textup{CRIS}(R/\Acris)^{\textup{op}, \circ}) \ra \textup{Shv} (\textup{Inf}(R[\frac{1}{p}]/\Bdrp)^{\textup{op}, \circ})$ on topoi. The desired map $b$ follows from base change. To show that $b: R \Gamma_{\crys} (\fX/\Acris) \widehat \otimes^\L \Bdrp \isom R \Gamma_{\textup{inf}} (X/\Bdrp)$ is an isomorphism, it suffices to do so after reducing mod $\xi$ as both sides are derived $\xi$-complete. 
After mod $\xi$, the map constructed above becomes $R \Gamma_{\textup{dR}}(\fX) [\frac{1}{p}] \isom R \Gamma_{\textup{dR}}(X)$ by the de Rham comparison. 
\end{proof} 

In particular, in this setup we obtain a comparison between $\Ainf$-cohomology and the infinitesimal cohomology: 

\begin{corollary} \label{cor:Bdrp_lattice}
 Now assume that $\fX$ is smooth proper over $\spf \mO_C$. Then we have a functorial isomorphism 
$$ R \Gamma_{\Ainf} (\fX) \otimes^\L \Bdrp \isom R\Gamma_{\textup{inf}}(X/\Bdrp).$$
In particular, on cohomology groups we have $H^i_{\Ainf}(\fX) \otimes \Bdrp \cong H^i_{\textup{inf}}(X/\Bdrp)$. 
\end{corollary}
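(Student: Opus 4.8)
The plan is to derive Corollary \ref{cor:Bdrp_lattice} by combining two isomorphisms already available: the base-changed map $h_{\crys} \otimes \Bdrp$ from Lemma \ref{lemma:invalid}, and the infinitesimal comparison $b$ from Lemma \ref{lemma:compare_inf_coh_with_abs_crys}. Concretely, I would write the composite
\[
R \Gamma_{\Ainf}(\fX) \otimes^\L_{\Ainf} \Bdrp \;\xleftarrow[\ \sim\ ]{h_{\crys}\otimes\Bdrp}\; R\Gamma_{\crys}(\fX_{\mO_C/p}/\Acris) \otimes^\L_{\Acris} \Bdrp \;\xrightarrow[\ \sim\ ]{b}\; R\Gamma_{\textup{inf}}(X/\Bdrp),
\]
and take the inverse of the first arrow followed by the second. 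Both arrows are isomorphisms by the cited lemmas, so the composite is an isomorphism of objects in $D(\Bdrp)$; functoriality in $\fX$ is inherited from the functoriality of $h_{\crys}$ (Theorem \ref{thm:construction_of_h_cris}) and of $b$ (Lemma \ref{lemma:compare_inf_coh_with_abs_crys}), each of which was constructed locally on $\fX_{\ett, \textup{aff}}$ and glued, so the composition glues in the same way.

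The second assertion, on cohomology groups, requires a small extra argument: one needs $H^i_{\Ainf}(\fX) \otimes_{\Ainf} \Bdrp \cong H^i\big(R\Gamma_{\Ainf}(\fX) \otimes^\L_{\Ainf} \Bdrp\big)$, i.e.\ that the higher $\operatorname{Tor}$ terms in the base-change spectral sequence vanish. Here I would invoke that $\fX$ is smooth \emph{proper}, so by Theorem \ref{thm:valuation_in_BKF} each $H^i_{\Ainf}(\fX)$ is a Breuil--Kisin--Fargues module, hence finitely presented over $\Ainf$; after inverting $p$ (which happens on passing to $\Bdrp = \Ainf[\tfrac1p]^{\wedge}_{\xi}$) such modules are finite free over $\Ainf[\tfrac1p]$ by Theorem \ref{thm:valuation_in_BKF} again, so base change to $\Bdrp$ is flat on cohomology and the spectral sequence degenerates. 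Combined with the object-level isomorphism, this yields $H^i_{\Ainf}(\fX) \otimes_{\Ainf} \Bdrp \cong H^i_{\textup{inf}}(X/\Bdrp)$.

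I do not expect a serious obstacle here: the corollary is essentially a formal splicing of the two preceding lemmas once properness is invoked to control the individual cohomology groups. The only point demanding care is the coefficient bookkeeping --- checking that the completion in $R\Gamma_{\crys}(\fX_{\mO_C/p}/\Acris) \widehat\otimes^\L \Bdrp$ appearing in Lemma \ref{lemma:compare_inf_coh_with_abs_crys} agrees with the (uncompleted) $\otimes^\L_{\Acris}\Bdrp$ used when invoking Lemma \ref{lemma:invalid}; these coincide because $R\Gamma_{\crys}(\fX_{\mO_C/p}/\Acris)$ is a perfect $\Acris$-complex (as recalled in the proof of Lemma \ref{lemma:invalid}), so the derived $\xi$-completion after base change is automatic and no information is lost. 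With that observed, the proof is just: compose $b$ with $(h_{\crys}\otimes\Bdrp)^{-1}$, note functoriality, and pass to cohomology using that each $H^i_{\Ainf}(\fX)$ is a Breuil--Kisin--Fargues module.
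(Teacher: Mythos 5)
Your proposal is correct and matches the paper's own proof: the paper likewise defines the isomorphism as $b \circ (h_{\crys} \otimes \Bdrp)^{-1}$, combining Lemma \ref{lemma:invalid} and Lemma \ref{lemma:compare_inf_coh_with_abs_crys}, and deduces the statement on cohomology groups from the fact that each $H^i_{\Ainf}(\fX)$ is a Breuil--Kisin--Fargues module. Your extra remarks on functoriality and on completed versus uncompleted base change are fine but not points of divergence.
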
 

\begin{proof} 
We combine Lemma \ref{lemma:invalid}  and Lemma \ref{lemma:compare_inf_coh_with_abs_crys} to obtain the composition $b \circ (h_{\crys} \otimes \Bdrp)^{-1}$. The rest follows from the fact that $H^i_{\Ainf}(\fX)$ is a Breuil--Kisin--Fargues module. 
\end{proof} 

\begin{corollary}
When $X = \fX_{C}^{\textup{ad}}$ is the generic fiber of $\fX$, the isomorphism above plus the $\etale$ comparison gives us a canonical isomorphism 
$$ H^i_{\textup{inf}}(X/\Bdrp) \otimes \Bdr \cong  H^i_{\ett} (X, \Z_p) \otimes \Bdr.$$ 
\end{corollary}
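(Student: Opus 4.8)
The plan is to obtain this as a purely formal consequence of Corollary~\ref{cor:Bdrp_lattice} together with the $\etale$ comparison recalled in the introduction, by passing from $\Bdrp$ to $\Bdr = \Bdrp[\tfrac{1}{\xi}]$ and matching this with inverting $\mu$ over $\Ainf$. The first step is the observation that $\xi$ and $\mu$ generate the same ideal of $\Bdrp$. Indeed, $\mu = \xi\cdot\varphi^{-1}(\mu)$, and the image of $\varphi^{-1}(\mu)$ under $\theta\colon \Bdrp \twoheadrightarrow \Bdrp/\xi = C$ is $\theta(\varphi^{-1}(\mu)) = \zeta_p - 1 \neq 0$; since $\Bdrp$ is $\xi$-adically complete, an element which is a unit mod $\xi$ is a unit, so $\varphi^{-1}(\mu) \in (\Bdrp)^\times$. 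Hence $\Bdr = \Bdrp[\tfrac{1}{\xi}] = \Bdrp[\tfrac{1}{\mu}]$ is in a canonical way an algebra over $\Ainf[\tfrac{1}{\mu}]$, and it is flat over $\Bdrp$.

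Granting this, I would simply base change the two input isomorphisms to $\Bdr$ and compose. Tensoring the identification $H^i_{\Ainf}(\fX)\otimes_{\Ainf}\Bdrp \cong H^i_{\textup{inf}}(X/\Bdrp)$ of Corollary~\ref{cor:Bdrp_lattice} along the flat map $\Bdrp \to \Bdr$ gives $H^i_{\textup{inf}}(X/\Bdrp)\otimes_{\Bdrp}\Bdr \cong H^i_{\Ainf}(\fX)\otimes_{\Ainf}\Bdr$. On the other hand, by the $\etale$ comparison we have $H^i_{\Ainf}(\fX)[\tfrac{1}{\mu}] = H^i_{\ett}(X,\Z_p)\otimes_{\Z_p}\Ainf[\tfrac{1}{\mu}]$, exactly as already used in the proof of Theorem~\ref{thm:valuation_in_BKF}, and since $\mu$ is invertible in $\Bdr$ this yields
$$H^i_{\Ainf}(\fX)\otimes_{\Ainf}\Bdr \;=\; H^i_{\Ainf}(\fX)[\tfrac{1}{\mu}]\otimes_{\Ainf[1/\mu]}\Bdr \;\cong\; H^i_{\ett}(X,\Z_p)\otimes_{\Z_p}\Bdr.$$
Composing the two gives the desired isomorphism, with canonicity and functoriality inherited from those of Corollary~\ref{cor:Bdrp_lattice} and of the $\etale$ comparison.

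There is no real obstacle here: the entire content is that the $\Bdrp$-lattice of Corollary~\ref{cor:Bdrp_lattice} recovers $\etale$ cohomology after inverting $\xi$, and the only point requiring attention is the compatibility between the $\xi$-adic structure over $\Bdrp$ and the $\mu$-inverted structure over $\Ainf$, namely that $\xi$ and $\mu$ differ by a unit of $\Bdrp$. If one prefers to argue at the level of complexes, one can instead base change both the quasi-isomorphism of Corollary~\ref{cor:Bdrp_lattice} and the $\etale$ comparison to $R\Gamma_{\Ainf}(\fX)\otimes^\L_{\Ainf}\Bdr$ and take $H^i$, noting that no higher $\textup{Tor}$ terms appear since $H^j_{\Ainf}(\fX)[\tfrac{1}{p}]$ is finite free over $\Ainf[\tfrac{1}{p}]$ and $\Bdr$ is flat over $\Z_p$.
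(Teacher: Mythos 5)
Your argument is correct and is exactly the reasoning the paper leaves implicit: the corollary is stated as an immediate consequence of Corollary~\ref{cor:Bdrp_lattice} base-changed along $\Bdrp\to\Bdr$, combined with the \'etale comparison $H^i_{\Ainf}(\fX)[\tfrac{1}{\mu}]\cong H^i_{\ett}(X,\Z_p)\otimes_{\Z_p}\Ainf[\tfrac{1}{\mu}]$ already invoked in the proof of Theorem~\ref{thm:valuation_in_BKF}, using that $\mu=\xi\,\varphi^{-1}(\mu)$ with $\varphi^{-1}(\mu)$ a unit in $\Bdrp$. Your verification of that unit statement and of the flatness/Tor-vanishing points simply spells out details the paper takes for granted.
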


As a consequence we can reprove the following theorem of \cite{BMS}.

\begin{theorem}
\label{thm:BMS_on_recoverfromgeneric}
Suppose that $H^i_{\crys} (\fX_k/W(k))$ is torsion free, then we can recover $H^i_{\Ainf}(\fX)$ with its $\varphi$-action from the generic fiber $X$, more precisely from $H^i_{\ett} (X, \Z_p)$ and $H^i_{\textup{inf}}(X/\Bdrp)$. If moreover $H^{i+1}_{\crys}(\fX_k/W(k))$ is torsion free, then we can further recover the integral crystalline cohomology $H^i_{\crys}(\fX_k/W(k))$ with its $\varphi$-action. 
\end{theorem}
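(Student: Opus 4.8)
The plan is to feed the $\Bdrp$-lattice constructed in Corollary~\ref{cor:Bdrp_lattice} into Fargues's classification (Theorem~\ref{thm:Fargues}). The very first observation is that the torsion-freeness of $H^i_\crys(\fX_k/W(k))$ makes $H^i_{\Ainf}(\fX)$ a \emph{finite free} Breuil--Kisin--Fargues module: this is the implication recorded in Remark~\ref{remark:torsion} (using Theorem~\ref{thm:valuation_in_BKF} to know $H^i_{\Ainf}(\fX)[\frac{1}{p}]$ is free). Fargues's functor therefore applies, and, using only its full faithfulness (the easy direction), $(H^i_{\Ainf}(\fX),\varphi)$ is pinned down, up to canonical isomorphism, by the pair $\big((H^i_{\Ainf}(\fX)\otimes_{\Ainf} W(C^\flat))^{\varphi=1},\, H^i_{\Ainf}(\fX)\otimes_{\Ainf}\Bdrp\big)$ together with the inclusion of the latter into the former tensored over $\Z_p$ with $\Bdr$.

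Next I would identify each entry of this pair in terms of the generic fiber. The $\etale$ comparison $H^i_{\Ainf}(\fX)[\frac{1}{\mu}]\cong H^i_{\ett}(X,\Z_p)\otimes_{\Z_p}\Ainf[\frac{1}{\mu}]$ base-changes along $\Ainf[\frac{1}{\mu}]\to W(C^\flat)$ (in which $\mu$ is a unit) to a $\varphi$-equivariant isomorphism $H^i_{\Ainf}(\fX)\otimes_{\Ainf}W(C^\flat)\cong H^i_{\ett}(X,\Z_p)\otimes_{\Z_p}W(C^\flat)$; taking $\varphi$-invariants and using $W(C^\flat)^{\varphi=1}=\Z_p$ yields $(H^i_{\Ainf}(\fX)\otimes W(C^\flat))^{\varphi=1}\cong H^i_{\ett}(X,\Z_p)$. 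On the other side, Corollary~\ref{cor:Bdrp_lattice} gives $H^i_{\Ainf}(\fX)\otimes_{\Ainf}\Bdrp\cong H^i_{\textup{inf}}(X/\Bdrp)$, and the corollary immediately following it provides the compatible inclusion $H^i_{\textup{inf}}(X/\Bdrp)\hookrightarrow H^i_{\ett}(X,\Z_p)\otimes_{\Z_p}\Bdr$. Hence the pair associated to $H^i_{\Ainf}(\fX)$ is exactly $\big(H^i_{\ett}(X,\Z_p),\,H^i_{\textup{inf}}(X/\Bdrp)\big)$, which gives the first assertion.

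For the second assertion I would additionally invoke the torsion-freeness of $H^{i+1}_\crys(\fX_k/W(k))$: by Remark~\ref{remark:torsion} this makes $H^{i+1}_{\Ainf}(\fX)$ finite free over $\Ainf$, hence flat, so $\textup{Tor}_1^{\Ainf}(H^{i+1}_{\Ainf}(\fX),W(k))=0$. Plugging this vanishing into the short exact sequence of Lemma~\ref{lemma:relating_Tor_and_crystalline} (whose hypothesis now holds by Theorem~\ref{thm:valuation_in_BKF}) produces a $\varphi$-equivariant isomorphism $H^i_{\Ainf}(\fX)\otimes_{\Ainf,\vartheta}W(k)\cong H^i_\crys(\fX_k/W(k))$, coming from the $\varphi$-compatible crystalline comparison of Corollary~\ref{cor:crystalline_comparison}. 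Since $\vartheta\colon\Ainf\to W(k)$ is a fixed $\varphi$-equivariant map, base-changing along $\vartheta$ the module $(H^i_{\Ainf}(\fX),\varphi)$ recovered in the first step yields $(H^i_\crys(\fX_k/W(k)),\varphi)$.

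I expect the main obstacle to be compatibility bookkeeping rather than a new idea: one must check that the two identifications above are compatible with each other and with the lattice inclusion inside $H^i_{\ett}(X,\Z_p)\otimes\Bdr$, so that Fargues's (quasi-)inverse applied to $\big(H^i_{\ett}(X,\Z_p),H^i_{\textup{inf}}(X/\Bdrp)\big)$ genuinely returns $H^i_{\Ainf}(\fX)$ with its Frobenius and not merely an abstractly isomorphic object. The two verifications that require (standard) care are the $\varphi$-equivariance of the $\etale$ comparison upon passing to $\varphi$-invariants over $W(C^\flat)$, and the identity $W(C^\flat)^{\varphi=1}=\Z_p$.
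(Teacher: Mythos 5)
Your proposal is correct and follows essentially the same route as the paper: torsion-freeness plus Remark \ref{remark:torsion} to get a finite free Breuil--Kisin--Fargues module, Fargues's classification (Theorem \ref{thm:Fargues}) applied to the pair $\big(H^i_{\ett}(X,\Z_p), H^i_{\textup{inf}}(X/\Bdrp)\big)$ via the \'etale comparison and Corollary \ref{cor:Bdrp_lattice}, and then Lemma \ref{lemma:relating_Tor_and_crystalline} with the vanishing Tor term for the second claim. Your extra remarks (invertibility of $\mu$ in $W(C^\flat)$, $W(C^\flat)^{\varphi=1}=\Z_p$, and the Tor$_1$ vanishing from freeness of $H^{i+1}_{\Ainf}(\fX)$) simply make explicit what the paper leaves implicit.
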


\begin{proof} 
If  $H^i_{\crys} (\fX_k/W(k))$ is torsion free, then by Remark \ref{remark:torsion} $H^i_{\Ainf}(\fX)$ is a free Breuil--Kisin--Fargues module, thus by Theorem \ref{thm:Fargues} it is determined by the pair $(T, \Xi)$, where $T = (H^i_{\Ainf} (\fX) \otimes W(C^\flat))^{\varphi = 1} 
\cong H^i_{\ett} (X, \Z_p)$, and $\Xi = H^i_{\Ainf} (\fX) \otimes \Bdrp \cong H^i_{\textup{inf}}(X/\Bdrp)$ by Corollary \ref{cor:Bdrp_lattice}. The first claim hence follows. Now suppose in addition that $H^{i+1}_{\crys} (\fX_k/W(k))$ is torsion free, then by Lemma \ref{lemma:relating_Tor_and_crystalline} we have a $\varphi$-equivariant isomorphism $H^i_{\crys}(\fX_k/W(k)) \cong H^i_{\Ainf} (\fX) \otimes W(k)$, hence the second claim follows from the first. 
\end{proof} 

\begin{remark} 
As explained in \cite{BMS}, by Kisin's construction (see Proposition 4.34 in \textit{loc.cit.}) the theorem above implies the version with Galois actions (when $\fX$ is defined over $\mO_K$ for some discretely valued extension $K/\Q_p$), namely part (2) of Corollary \ref{maincor:recover}.  
\end{remark}

\subsection{Compatibility of $h_{\crys}$ with filtration}
We first construct a map  
$$h_{\textup{dR}}: R \Gamma_{\textup{Inf}} (X/\Bdrp)  \longrightarrow R \Gamma_{\pet} (X, \mathbb{B}_{\textup{dR}}^+).$$ The construction is analogous to that of $h_{\crys}$, this time making use of pro-$\etale$ descent instead of flat/quasisyntomic descent. 

\begin{construction} \label{con:h_dR}
Let $X$ be a  smooth rigid analytic variety over $C$. It suffices to construct  
$h_{\textup{dR}}: R \Gamma_{\textup{Inf}} (A/\Bdrp)  \ra R \Gamma_{\pet} (\spa(A, A^\circ), \mathbb{B}_{\textup{dR}}^+)$ for petit affinoid Tate algebras $A$. Now as affinoid perfectoids form a basis for $\spa(A, A^\circ)_{\pet}$, it suffices to construct a functorial map from $R \Gamma_{\textup{Inf}} (A/\Bdrp)$ to $\mathbb{B}_{\textup{dR}}^+ (S, S^+)$, for all $U \ra \spa(A, A^\circ)_{\pet}$ affinoid perfectoid with $\hat U = \spa (S, S^+)$ (following notation in \cite{Scholze}, see also Theorem 6.5 in \textit{loc.cit.}). For this we observe that $\mathbb{B}_{\textup{dR}}^+ (S, S^+) \twoheadrightarrow S$ is an object in $\textup{Inf}(A/\Bdrp)$, and we get a functorial map $ \Gamma_{\textup{Inf}} (A/\Bdrp) \ra \mathbb{B}_{\textup{dR}}^+ (S, S^+)$
as in the construction of $h_{\crys}$. 
\end{construction}

Our final goal is to prove the filtration compatibility of $h_{\crys} \otimes \Bdr$. In order to do this, we first relate $h_{\crys} \otimes \Bdr$ to the map $h_{\textup{dR}}$ constructed above. Then, in the case when $\fX$ is base changed from $\fX_{\mO_K}$, we relate the latter map ($h_{\textup{dR}}$) to a map from $R \Gamma_{\pet} (X_{\cl K}, \Omega_{X_{\cl K}}^\bullet) \otimes^\L \Bdrp$ to  $ R \Gamma_{\pet} (X, \mathbb{B}_{\textup{dR}}^+)$ constructed in \cite{Scholze}, which is known to be a filtered isomorphism there. 

\begin{lemma} \label{lemma:base_change_compatible}
Let $\fX$ be a smooth proper formal scheme over $\mO_C$ with generic fiber $X$, then we have the following commutative diagram 
\[
\begin{tikzcd}[column sep = 1.2em, row sep = 1.5em]
R \Gamma_{\crys} (\fX/\Acris) \otimes^\L \Bdrp  \arrow[r] \arrow[d] 
& R \Gamma_{\Ainf} (\fX)   \otimes^\L \Bdrp \arrow[r] \arrow[d] & R \Gamma_{\Ainf} (\fX)  \otimes^\L \Bdr  \arrow[d] \\ 
R \Gamma_{\textup{Inf}} (X/\Bdrp) \arrow[r] 
& R \Gamma_{\pet} (X, \mathbb{B}_{\textup{dR}}^+) \arrow[r] &  
R \Gamma_{\pet} (X, \mathbb{B}_{\textup{dR}})
\end{tikzcd}
\]
where the left vertical map comes from the base change isomorphism. 
\end{lemma}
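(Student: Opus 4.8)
The plan is to reduce the statement to a purely local assertion about a single affinoid perfectoid cover of the generic fibre, on which the two composites around the left-hand square can be identified on the nose; the right-hand square is formal.

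First I would use functoriality to assemble all four maps from their restrictions to affine opens $\spf R \subset \fX_{\ett}$, and so assume $\fX = \spf R$ with $R$ the $p$-adic completion of a smooth $\mO_C$-algebra; write $X$ for its adic generic fibre. The right-hand square then commutes for trivial reasons: its horizontal arrows are the base changes along the localisation $\Bdrp \to \Bdr = \Bdrp[\tfrac1\xi]$ of the middle vertical arrow and of its target. Thus it remains to show that
\[
\delta \circ \big(h_{\crys}\otimes_{\Acris}\Bdrp\big) \;=\; h_{\textup{dR}}\circ b
\]
as maps $R\Gamma_{\crys}((R/p)/\Acris)\otimes^\L\Bdrp \longrightarrow R\Gamma_{\pet}(X,\mathbb{B}_{\textup{dR}}^+)$, where $\delta$ is the canonical map induced by $A\Omega_{\fX} = L\eta_\mu R\nu_*\Ainfx \to R\nu_*\Ainfx$ followed by the base change $\Ainfx \rightsquigarrow \mathbb{B}_{\textup{dR}}^+$, and $b$ is the isomorphism of Lemma \ref{lemma:compare_inf_coh_with_abs_crys}.

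Next I would localise both sides on the affinoid perfectoid basis of $X_{\pet}$. Since $R\Gamma_{\pet}(X,\mathbb{B}_{\textup{dR}}^+) \cong \Rlim_{(\mathcal{S},\mathcal{S}^+)}\mathbb{B}_{\textup{dR}}^+(\mathcal{S},\mathcal{S}^+)$ over affinoid perfectoid pro-\'etale covers, and since, by Construction \ref{con:h_dR}, $h_{\textup{dR}}$ is the derived limit of the component maps obtained by restricting $R\Gamma_{\textup{inf}}(R[\tfrac1p]/\Bdrp)$ along the infinitesimal objects $\mathbb{B}_{\textup{dR}}^+(\mathcal{S},\mathcal{S}^+)\twoheadrightarrow\mathcal{S}$, while $\delta$ is (using that $A\Omega_{(-)}\widehat\otimes^\L\Acris$ is a quasisyntomic sheaf by Lemma \ref{lemma:tensor_with_Acris_is_sheaf}, and that such a cover refines a proj-quasisyntomic cover of $R$) likewise the derived limit of its restrictions to these covers, it suffices to fix one such cover $(\mathcal{S},\mathcal{S}^+)$ and check that the two induced maps into $\mathbb{B}_{\textup{dR}}^+(\mathcal{S},\mathcal{S}^+)$ coincide. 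Set $S := \mathcal{S}^+$, a perfectoid — hence quasiregular semiperfectoid — $\mO_C$-algebra with a map $R \to S$. Because $S$ is perfectoid, $A\Omega_S = \Ainf(S)$ up to the irrelevant $W(\fm^\flat)$-torsion, so $A\Omega_S\widehat\otimes^\L\Acris$ is the $p$-completed PD-envelope $\Acris(S/p)$ of $\Ainf(S)\twoheadrightarrow S/p$, and $L\eta_\mu$ acts trivially on the $\mu$-torsion-free module $\Ainf(S)$ in degree $0$; hence, after the identification $A\Omega_S\widehat\otimes^\L\Acris\otimes\Bdrp = \varprojlim_m \Acris(S/p)[\tfrac1p]/\xi^m \cong \mathbb{B}_{\textup{dR}}^+(\mathcal{S},\mathcal{S}^+)$, the component of $\delta$ is the identity. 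Consequently the component of $\delta\circ(h_{\crys}\otimes\Bdrp)$ is $h_S\otimes_{\Acris}\Bdrp$, where $h_S$ is the restriction map of Construction \ref{con:functorial_map_h_crys} attached to the crystalline PD-thickening $\Acris(S/p)\twoheadrightarrow S/p$. On the other side, $b$ is built from the functor $\mu\colon \textup{CRIS}(R/\Acris)^{\textup{op},\circ}\to \textup{Inf}(R[\tfrac1p]/\Bdrp)^{\textup{op},\circ}$, $(D\twoheadrightarrow D/I)\mapsto(\sq D\twoheadrightarrow (D/I)[\tfrac1p])$, so precomposing with $b$ the evaluation at the infinitesimal object $\mathbb{B}_{\textup{dR}}^+(\mathcal{S},\mathcal{S}^+)\twoheadrightarrow\mathcal{S}$ amounts to evaluating $R\Gamma_{\crys}((R/p)/\Acris)\otimes^\L\Bdrp$ at the crystalline object sent by $\mu$ to it — which, by the same identification of $\mathbb{B}_{\textup{dR}}^+(\mathcal{S},\mathcal{S}^+)$ with the completed base change of $\Acris(S/p)$, is exactly $\Acris(S/p)\twoheadrightarrow S/p$. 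Hence the component of $h_{\textup{dR}}\circ b$ is also $h_S\otimes_{\Acris}\Bdrp$, and the square commutes.

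I expect the main obstacle to be this last identification: showing that applying $\mu$ to the crystalline PD-thickening $\Acris(S/p)\twoheadrightarrow S/p$ produces precisely the period-sheaf thickening $\mathbb{B}_{\textup{dR}}^+(\mathcal{S},\mathcal{S}^+)\twoheadrightarrow\mathcal{S}$, i.e. that the completed base change of $\Acris(S/p)$ along $\Acris\to\Bdrp$, completed further along the kernel of the projection to $S[\tfrac1p]$, recovers $\mathbb{B}_{\textup{dR}}^+(\mathcal{S},\mathcal{S}^+)$. This is a variant of Scholze's description of $\mathbb{B}_{\textup{dR}}^+$ as a localised completed incarnation of $\Acris$ (cf. \cite{Scholze}), but matching topologies and ruling out spurious completions will take some care. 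A secondary, routine point to nail down is that $\delta$ and $h_{\textup{dR}}$ are genuinely the derived limits of their restrictions to the affinoid perfectoid basis of $X_{\pet}$.
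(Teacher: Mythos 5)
Your proposal is correct and is essentially the paper's argument: the paper likewise reduces to affine opens $\spf R$ and to evaluating both composites on affinoid perfectoid covers, where each becomes the crystalline restriction map to $\Acris(S)$ viewed as a PD-thickening of $S/p$ (this is exactly the paper's displayed triangle, whose commutativity it declares clear by functoriality). For the step you single out as the main obstacle, note that the full identification of $\mathbb{B}_{\textup{dR}}^+(\mathcal{S},\mathcal{S}^+)$ with the completed base change of $\Acris(S)$ is not actually needed: a canonical morphism of thickenings over $R[\frac{1}{p}]$ already forces the desired factorization, by functoriality of the restriction maps in the infinitesimal site.
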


\begin{proof} 
It suffices to replace $\fX$ by $\spf R$ where $R$ is the $p$-completion of a smooth $\mO_C$-algebra. Moreover, by functoriality we further reduce to show the following: for a map $R \ra S$ from $R$ to a perfectoid $\mO_C$-algebra $S$ (the $S$ we have in mind is pro-$\etale$ but not necessarily quasisyntomic over $R$), the following diagram commutes.
\[
\begin{tikzcd}[row sep = 1.5em]
R \Gamma_{\crys}(R/\Acris) \arrow[r, "h_{\crys}"] \arrow[rd] & A \Omega_R \widehat \otimes^\L \Acris  \arrow[d] \\
& A \Omega_S \widehat \otimes^\L \Acris = \Acris (S)
\end{tikzcd}
\]
The vertical map comes from the natural map $A \Omega_R \ra A \Omega_S$, while the map $R\Gamma_{\crys} (R/\Acris) \ra \Acris(S)$ is induced from the crystalline site, by viewing the PD-thickening $\Acris (S) \ra S$ as an object in the crystalline site $\textup{CRIS} (R/\Acris)^{\textup{op}}$. The commutativity of the triangle is clear. 
\end{proof} 

Now we return the setup in Subsection \ref{ss:Bcris_comparison}, and let $\fX$ be the base change of $\fX_{\mO_K}$ to $\mO_C$. For a petit Tate algebra $A_K$ over $K$, we define $\textup{Inf}(A_K/K)$ similarly as $\textup{Inf}(A/C)$, and by the same proof of Proposition \ref{prop:compare_C_inf_coh_with_dR} (replacing $C$ by $K$) we have a functorial isomorphism $R \Gamma_{\textup{inf}}(X_K/K) \cong R \Gamma_{\textup{dR}}(X_K)$. The base change morphism 
for the indiscrete infinitesimal sites gives rise to a map   
$$R \Gamma_{\textup{inf}}(X_K/K) \otimes_K \Bdrp \longrightarrow R \Gamma_{\textup{inf}} (X_C/\Bdrp),$$
which is an isomorphism (again by considering its derived quotient mod $\xi$). Now we compose this isomorphism with $h_{\textup{dR}}$, and observe that from Lemma \ref{lemma:base_change_compatible}, in order to prove part (1) of Corollary \ref{maincor:recover} (the filtration compatibility of the $\Bcris$ comparison), it suffices to show that the following composition 
$$ R \Gamma_{\textup{dR}} (X_K) \otimes_K \Bdr \isom  R \Gamma_{\textup{inf}} (X_C/\Bdrp) \otimes \Bdr \xrightarrow{h_{\textup{dR}}}  R \Gamma_{\pet} (X_C, \mathbb{B}_{\textup{dR}})$$
is a filtered isomorphism. This follows from the following lemma (making use of Theorem 7.11 of \cite{Scholze}).


\begin{lemma} 
Retain the setup from above and adopt the notation from \cite{Scholze}. Then the following diagram commutes
\[
\begin{tikzcd} 
R \Gamma_{\textup{inf}}(X_K/K) \arrow[d, "\sim"{sloped, above}] \arrow[r] 
& R\Gamma_{\textup{inf}} (X_C/\Bdrp)  \arrow[r, "h_{\textup{dR}}"]  &  R \Gamma  (X_C, \mathbb{B}_{\textup{dR}}^+ )  \arrow[d, equal]  \\
 R\Gamma (X_{K}, \Omega_{X_{K}}^\bullet)    \arrow[r]  & R \Gamma  (X_C, \mO \mathbb{B}_{\textup{dR}}^+ \otimes_{\mO_{X_C}} \Omega_{X_{C}}^\bullet )  \arrow[r, "\sim"]  
 & R \Gamma  (X_C, \mathbb{B}_{\textup{dR}}^+ )
\end{tikzcd}
\]
where $\Omega_{X_{K}}^\bullet$ is the pullback of $\Omega_{X_{K}, \ett}^\bullet$ to the pro-$\etale$ site (left vertical isomorphism reflects the fact that the de Rham cohomology can be computed in the pro-$\etale$ site). The bottom arrows are constructed in \cite{Scholze}. 
\end{lemma}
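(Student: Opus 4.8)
The plan is to check the asserted commutativity locally on petit affinoids and then to reduce both composites $R\Gamma_{\textup{inf}}(X_K/K)\to R\Gamma(X_C,\mathbb{B}_{\textup{dR}}^+)$ to one and the same map of cosimplicial rings evaluated on an affinoid perfectoid cover. Every object of the square is (a complex of) a sheaf on an \'etale site pulled back appropriately, and every arrow is a morphism of such sheaves, so it suffices to evaluate on the petit affinoids $\spa(A_K,A_K^\circ)$ of $X_K$, writing $A:=A_K\widehat\otimes_K C$ for the base change. Following the proof of Proposition~\ref{prop:compare_C_inf_coh_with_dR} I would fix an \'etale map $K\langle T_i\rangle\to A_K$ and a surjection $K\langle T_i,Y_j\rangle\twoheadrightarrow A_K$; base changing to $\Bdrp$ produces the weakly terminal object $\Sigma_{\textup{dR}}\in\textup{Inf}(A/\Bdrp)^{\textup{op},\circ}$ (the $\ker$-completion of $\Bdrp\langle T_i,Y_j\rangle$), its cosimplicial self-product $\Sigma_{\textup{dR}}(\bullet)$, and the identifications $\Sigma_{\textup{dR}}(\bullet)\cong\Omega^\bullet_{\Sigma_{\textup{dR}}/\Bdrp}$ (reducing mod $\xi$ to $\Omega^\bullet_{A/C}$). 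The analogous data over $K$ realizes the left vertical isomorphism $R\Gamma_{\textup{inf}}(X_K/K)\cong R\Gamma(X_K,\Omega^\bullet_{X_K})$ and its base change, and exhibits the top-left horizontal arrow as the base-change map $\Sigma_{\textup{dR}}^K(\bullet)\widehat\otimes_K\Bdrp\to\Sigma_{\textup{dR}}(\bullet)$.

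Next I would compute $h_{\textup{dR}}$ in these terms. By Construction~\ref{con:h_dR}, $h_{\textup{dR}}$ on $\spa(A,A^\circ)$ is obtained by evaluating the structure sheaf of $\textup{Inf}(A/\Bdrp)$ at the objects $\bigl(\mathbb{B}_{\textup{dR}}^+(S,S^+),\ker\bigr)$ as $\hat U=\spa(S,S^+)$ runs over affinoid perfectoids in $\spa(A,A^\circ)_{\pet}$, followed by sheafification. By weak terminality of $\Sigma_{\textup{dR}}$, such an evaluation is realized, after choosing a $\Bdrp$-algebra lift $\iota\colon\Sigma_{\textup{dR}}\to\mathbb{B}_{\textup{dR}}^+(S,S^+)$ of the composite $A\to S$ along $\mathbb{B}_{\textup{dR}}^+(S,S^+)\twoheadrightarrow S$, by the induced augmentation $\Sigma_{\textup{dR}}(\bullet)\to\mathbb{B}_{\textup{dR}}^+(S,S^+)$. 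Under $\Sigma_{\textup{dR}}(\bullet)\cong\Omega^\bullet_{\Sigma_{\textup{dR}}/\Bdrp}$, the same double-complex argument as in the proof of Proposition~\ref{prop:compare_C_inf_coh_with_dR} (parallel to Lemma~\ref{lemma:compatible_with_dR_bc}) shows that, up to homotopy and independently of $\iota$, $h_{\textup{dR}}$ is the map $\Omega^\bullet_{\Sigma_{\textup{dR}}/\Bdrp}\to\mathbb{B}_{\textup{dR}}^+(S,S^+)$ which is $\iota$ in degree $0$ and is the contracting homotopy of the (formal) Poincar\'e lemma attached to $\iota$ and $d$ in higher degrees.

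The crux is to match this with Scholze's route. For a smooth affinoid, Scholze builds the de Rham comparison from the structural period sheaf: the obvious inclusion $\Omega^\bullet_{X_K}\to\mO\mathbb{B}_{\textup{dR}}^+\otimes_{\mO_{X_C}}\Omega^\bullet_{X_C}$ composed with the $\nabla$-Poincar\'e lemma $\mO\mathbb{B}_{\textup{dR}}^+\otimes\Omega^\bullet_{X_C}\isom\mathbb{B}_{\textup{dR}}^+$ (\cite{Scholze}, in particular Theorem~7.11). On an affinoid perfectoid $(S,S^+)$, a presentation of $\mO\mathbb{B}_{\textup{dR}}^+(S,S^+)$ as a power-series $\mathbb{B}_{\textup{dR}}^+(S,S^+)$-algebra in variables ``$T_i-[T_i^\flat]$'' with $\nabla=d$ on them is exactly a lift $\iota$ as above; hence both composites, evaluated on $(S,S^+)$, are computed by the same complex $\Omega^\bullet_{\Sigma_{\textup{dR}}/\Bdrp}$ mapping to $\mathbb{B}_{\textup{dR}}^+(S,S^+)$ via the Poincar\'e contraction attached to $\iota$. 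Since that contraction is the (up to coherent homotopy) unique $\mathbb{B}_{\textup{dR}}^+(S,S^+)$-linear homotopy inverse of $\iota$ compatible with $d$, the two descriptions agree; functoriality in $(S,S^+)$ and sheafification over $\spa(A,A^\circ)_{\pet}$ then yield commutativity of the square of sheaves, hence after $R\Gamma$.

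I expect the crux step to be the main obstacle: identifying ``evaluation in the infinitesimal site at the weakly terminal object'' with ``de Rham comparison via $\mO\mathbb{B}_{\textup{dR}}^+$ and the Poincar\'e lemma'' on the nose, uniformly in the perfectoid cover. One must verify that Scholze's explicit local description of $\mO\mathbb{B}_{\textup{dR}}^+$ with its connection encodes the same datum as a morphism from $\Sigma_{\textup{dR}}$ in the infinitesimal site, and that the two a priori different contracting homotopies realizing the two Poincar\'e lemmas coincide up to homotopy. The remaining ingredients — localization, the double-complex and spectral-sequence identifications, and independence of all choices up to homotopy — are routine given the proof of Proposition~\ref{prop:compare_C_inf_coh_with_dR} and the argument of Lemma~\ref{lemma:compatible_with_dR_bc}.
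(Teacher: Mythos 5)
Your overall skeleton --- localize to affinoids, compute $R\Gamma_{\textup{inf}}$ via a weakly terminal object, compute $h_{\textup{dR}}$ by evaluating on affinoid perfectoids, and match against Scholze's description of $\mO\mathbb{B}_{\textup{dR}}^+$ --- is the same as the paper's, but the step you yourself flag as the crux is genuinely missing, and in the setup you chose it would not go through as stated. The identification $\mO\mathbb{B}_{\textup{dR}}^+(S,S^+)\cong \mathbb{B}_{\textup{dR}}^+(S,S^+)[[T_i\otimes 1-1\otimes[T_i^\flat]]]$ (Scholze, Proposition 6.10) is not available for an arbitrary affinoid perfectoid $(S,S^+)$ over a petit chart $K\langle T_i\rangle\to A_K$: the element $T_i^\flat$ only exists when $S^+$ carries a compatible system of $p$-power roots of the image of $T_i$, i.e.\ when $(S,S^+)$ lies over the toric tower, and adjoining $p$-power roots of the $T_i$ is a pro-\'etale cover only when the $T_i$ are units. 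This is exactly why the paper does not argue with petit affinoids and generic lifts $\iota$, but reduces to \emph{very small} Tate algebras $R_K$, with an \'etale chart $K\langle T_i^{\pm}\rangle\to R_K$ and a surjection $K\langle T_i^{\pm},Y_j^{\pm}\rangle\twoheadrightarrow R_K$ whose coordinates are units, and then checks commutativity after passing to the single explicit pro-\'etale cover $R_\infty$ obtained by extracting $p$-power roots of the $T_i$; only over that cover does Scholze's power-series description (with $X_i=T_i\otimes 1-1\otimes[T_i^\flat]$, the projection inducing a quasi-isomorphism of de Rham complexes) apply and make the bottom row of the diagram explicitly computable.

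Two further corrections to your computation of the composites. First, under the identification of the \v{C}ech--Alexander/cosimplicial complex with the de Rham complex of the weakly terminal object, the map induced by evaluation at $\mathbb{B}_{\textup{dR}}^+(S,S^+)\twoheadrightarrow S$ is the chosen lift in degree $0$ and \emph{zero} in positive degrees (exactly as in Lemma \ref{lemma:compatible_with_dR_bc}); it is not ``the contracting homotopy of the Poincar\'e lemma in higher degrees,'' and no appeal to uniqueness of contractions up to coherent homotopy is needed. Second, once everything is evaluated on $R_\infty$, the final matching in the paper is not a homotopy-uniqueness argument but a concrete degree-zero check: both composites are ring maps $R_K\to\mathbb{B}_{\textup{dR}}^+(R_\infty)$ lifting $R_K\to R_\infty$ along a pro-nilpotent thickening, $R_K$ is \'etale over $K\langle T_i^{\pm}\rangle$, and both send $T_i\mapsto[T_i^\flat]$ (the bottom route via $T_i\mapsto X_i+1\otimes[T_i^\flat]\mapsto[T_i^\flat]$), so they coincide. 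Supplying the toric reduction and this explicit identification is precisely what closes the gap you left open.
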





\begin{proof} 
It suffices to assume that $X_K = \spa (R_K, R_K^\circ)$ for a very small Tate algebra $R_K$ over $K$. Recall that $R_K$ is very small if there are enough units $u_i, \beta_j \in (R_K^\circ)^\times$ that induces an $\etale$ map $K \gr{T_i^{\pm}} \ra R_K$ and a surjective map $K \gr{T_i^{\pm}, Y_j^{\pm}} \twoheadrightarrow R_K$, with $T_i \mapsto u_i, Y_j \mapsto \beta_j$ (compare with petit Tate algebras). The pro-$\etale$ cover $K \gr{T_i^\pm} \ra K \gr{T_i^{\pm 1/p^\infty}}$ induces a pro-$\etale$ cover $R_K \ra R_{K, \infty}$ (resp. a pro-$\etale$ cover $R_C \ra R_\infty$ via base change). It suffices to check the commutativity of the diagram after passing to the pro-$\etale$ cover for the horizontal arrows on the bottom of the diagram. In other words, we want to show that the following diagram commutes
\[
\begin{tikzcd}
 R\Gamma_{\textup{inf}}(R_K/K) \arrow[r] \arrow[d, equal]  &  R\Gamma_{\textup{inf}}(X_C/\Bdrp) \arrow[r, "h_{\textup{dR}, \infty}"] &  \mathbb{B}_{\textup{dR}}^+ (R_\infty) \arrow[d, equal] \\
\Omega_{R_K}^\bullet  \arrow[r]  &  \Omega_{R_\infty}^\bullet \longrightarrow  \mO \mathbb{B}_{\textup{dR}}^+ (R_\infty) \otimes_{R_\infty} \Omega_{R_\infty}^\bullet  \arrow[r, "\sim"] & \mathbb{B}_{\textup{dR}}^+ (R_\infty)
\end{tikzcd}
\]
where $h_{\textup{dR}, \infty}$ is constructed similarly as $h_{\textup{dR}}$, by viewing $\Bdrp(R_\infty) \ra R_\infty$ as a pro-nilpotent thickening, and the last isomorphism in the bottom arrow comes from $ \mathbb{B}_{\textup{dR}}^+$-Poincar\'e Lemma of Scholze (Corollary 6.13 in \textit{loc.cit.}).  

As in the proof of Proposition \ref{prop:compare_C_inf_coh_with_dR} (replacing $K \gr{T_i, Y_j}$ by $K \gr{T_i^{\pm}, Y_j^{\pm}}$), let $\Sigma_K$ be the completion of $K \gr{T_i^{\pm}, Y_j^{\pm}}$ along the kernel of its projection to $R_K$. Again we have an isomorphism $\Sigma_K \isom R_K [\![Y_j - \beta_j ]\!]$ (note that there is indeed a map from $K \gr{T_i^{\pm}, Y_j^{\pm}} \ra R_K [\![Y_j - \beta_j]\!]$ because $Y_j$ is invertible in the latter power series ring, as each $\beta_j$ is a unit). This describes the map  $\Omega_{R_K}^\bullet \isom \Omega_{R_K [\![Y_j - \beta_j]\!]}^\bullet \cong R \Gamma_{\textup{inf}}(R_K/K)$  on the left.  The  top horizontal arrow is given by  lifting the map $K \gr{T_i^{\pm}, Y_j^{\pm}} \twoheadrightarrow R_K \ra R_\infty$ along $\mathbb{B}_{\textup{dR}}^+ (R_\infty) \ra R_\infty$, sending $T_i$ to $[T_i^\flat] \in \Ainf(R_\infty)$, where $T_i^\flat = (T_i, T_i^{1/p}, T_i^{1/p^2}, ... ) \in R_\infty^\flat$. This in particular determines a unique lift $R_K \ra \mathbb{B}_{\textup{dR}}^+ (R_\infty)$. Now unwinding definitions, the top horizontal arrows composed with the left vertical isomorphism can be described by $\Omega_{R_K}^\bullet \ra R_K \ra \mathbb{B}_{\textup{dR}}^+ (R_\infty)$. The bottom horizontal map factors through 
$$R_K \longrightarrow \mO \mathbb{B}_{\textup{dR}}^+ (R_\infty) \isom \mathbb{B}_{\textup{dR}}^+ (R_\infty)[\![X_i]\!] \xrightarrow{\textup{proj}} \mathbb{B}_{\textup{dR}}^+ (R_\infty),$$
where the isomorphism in the middle is described by Proposition 6.10 in \textit{loc.cit}, which sends $T_i \otimes 1 - 1 \otimes [T_i^\flat] \mapsto X_i$, and the projection map induces a quasi-isomorphism on the de Rham complexes.  Now the commutativity of the previous diagram follows from the fact that both maps agree on $T_i$: as the bottom morphism sends $T_i \mapsto (X_i + 1\otimes [T_i^\flat]) \mapsto [T_i^\flat]$. This concludes the lemma. 
\end{proof} 

Finally, this finishes the proof of the $\Bcris$ comparison, which we restate. 

\begin{theorem}
Suppose that $\fX_{\mO_K}$ is a smooth proper formal scheme over $\mO_K$, with $K$ a discrete $p$-adic field as before, with residue field $k_0$. Let $C = C_K$. Then $h_{\crys}$ induces an isomorphism 
$$ H^i_{\textup{crys}} (\fX_{k_0}/W(k_0)) \otimes_{W(k_0)} \Bcris \isom H^i_{\ett} (\fX_{C}^{\textup{ad}}, \Z_p) \otimes_{\Z_p} \Bcris$$
compatible with the $\textup{Gal}_K$-action, Frobenius, and filtration. 
\end{theorem}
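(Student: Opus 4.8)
The plan is to deduce the theorem from the pieces already assembled. The $(\textup{Gal}_K, \varphi)$-equivariance of the isomorphism is precisely the content of the theorem proved in Subsection \ref{ss:Bcris_comparison} (where $\beta_{\crys}$ was built out of $\alpha_{\crys}$, hence out of $h_{\crys}$), so the only remaining point is compatibility with filtrations, which lives on the $\Bdr$-level: one must show that the induced isomorphism
$$ H^i_{\textup{crys}}(\fX_{k_0}/W(k_0)) \otimes_{W(k_0)} \Bdr \isom H^i_{\ett}(\fX_C^{\textup{ad}}, \Z_p) \otimes_{\Z_p} \Bdr $$
carries the filtration on the left (induced, via Berthelot--Ogus as in Lemma \ref{lemma:crys_BO} and extension of scalars along $K_0 \to K$, by the Hodge filtration on $H^i_{\textup{dR}}(X_K/K)$) to the $\mathbb{B}_{\textup{dR}}$-filtration on the right.

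First I would rewrite the left-hand side in transparently filtered terms. Using Corollary \ref{cor:Bdrp_lattice} together with Lemma \ref{lemma:compare_inf_coh_with_abs_crys}, identify $H^i_{\textup{crys}}(\fX_{\mO_C/p}/\Acris) \otimes \Bdr$ with $H^i_{\textup{inf}}(X_C/\Bdrp) \otimes_{\Bdrp} \Bdr$, and then, via the base change isomorphism for the infinitesimal sites and Proposition \ref{prop:compare_C_inf_coh_with_dR} applied over $K$, with $H^i_{\textup{inf}}(X_K/K) \otimes_K \Bdr \cong H^i_{\textup{dR}}(X_K/K) \otimes_K \Bdr$. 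This realizes the filtration on the left, at the level of complexes, literally as the convolution of the Hodge filtration on $R\Gamma_{\textup{dR}}(X_K)$ with the $\xi$-adic filtration on $\Bdr$.

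The key step is to track $h_{\crys} \otimes \Bdr$ across these identifications and match it with Scholze's de Rham comparison. By Lemma \ref{lemma:base_change_compatible}, after base change to $\Bdr$ and composition with the $\etale$ comparison, $h_{\crys}$ is identified with the map $h_{\textup{dR}}$ of Construction \ref{con:h_dR} from $R\Gamma_{\textup{inf}}(X_C/\Bdrp)$ to $R\Gamma_{\pet}(X_C, \mathbb{B}_{\textup{dR}}^+)$. By the lemma immediately preceding the present theorem, the composite
$$ R\Gamma_{\textup{dR}}(X_K) \otimes_K \Bdr \isom R\Gamma_{\textup{inf}}(X_C/\Bdrp) \otimes \Bdr \xrightarrow{\; h_{\textup{dR}} \;} R\Gamma_{\pet}(X_C, \mathbb{B}_{\textup{dR}}) $$
agrees with the comparison map $R\Gamma(X_K, \Omega_{X_K}^\bullet) \otimes_K \Bdr \to R\Gamma(X_C, \mathbb{B}_{\textup{dR}})$ of \cite{Scholze}, which is a filtered quasi-isomorphism by Theorem 7.11 of \emph{loc.cit}. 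Passing to cohomology in degree $i$ — here the relevant Hodge--de Rham spectral sequence degenerates by properness and smoothness of $X_K$, so the filtrations on the cohomology groups are induced from those on the complexes — we conclude that $\beta_{\crys} \otimes \Bdr$ is a filtered isomorphism, hence that $h_{\crys}$ induces the asserted filtration-compatible $\Bcris$-comparison. Combined with the $(\textup{Gal}_K, \varphi)$-equivariance, descending to $\textup{Gal}_K$-invariants then yields the identification of $\textup{D}_{\textup{cris}}\big(H^i_{\ett}(\fX_C^{\textup{ad}}, \Q_p)\big)$ with the filtered $\varphi$-module $\big(H^i_{\crys}(\fX_{k_0}/W(k_0))_\Q, \phi, \textup{Fil}\big)$, i.e.\ part (2) of Corollary \ref{maincor:recover}.

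I expect the main obstacle to be the bookkeeping in the key step: one has to verify that the web of identifications — the base change isomorphism $b$ of Lemma \ref{lemma:compare_inf_coh_with_abs_crys}, the $\etale$ comparison, the $\Bdrp$-cohomology base changes of Corollary \ref{cor:Bdrp_lattice}, and Scholze's $\mathbb{B}_{\textup{dR}}$-Poincar\'e lemma — are mutually compatible and genuinely carry the filtrations, not merely the underlying modules, in a way compatible with $h_{\crys}$. Most of this has been prepared in Lemma \ref{lemma:base_change_compatible} and the preceding lemma, so the remaining work is to assemble them into a single commutative filtered diagram and then read off the statement on $H^i$.
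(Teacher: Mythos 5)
Your proposal is correct and follows essentially the same route as the paper: the $(\textup{Gal}_K,\varphi)$-equivariant isomorphism from Subsection \ref{ss:Bcris_comparison}, combined with Lemma \ref{lemma:base_change_compatible}, the identification $R\Gamma_{\textup{inf}}(X_K/K)\otimes_K\Bdrp \isom R\Gamma_{\textup{inf}}(X_C/\Bdrp)$, and the final lemma matching $h_{\textup{dR}}$ with Scholze's comparison map (a filtered isomorphism by Theorem 7.11 of \cite{Scholze}). Your added remark on Hodge--de Rham degeneration is harmless, as that point is already absorbed into Scholze's theorem, which is stated at the level of filtered cohomology groups.
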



\bibliographystyle{alpha}
\bibliography{dRW}

\end{document}